\newcommand{\real}{\mathbb{R}}
\numberwithin{equation}{section}
\theoremstyle{plain}
\newtheorem{theorem}{Theorem}[section]
\newtheorem{proposition}[theorem]{Proposition}
\newtheorem{lemma}[theorem]{Lemma}
\theoremstyle{remark}
\newtheorem{remark}[theorem]{Remark}
\theoremstyle{definition}
\begin{document}
\date{\today}

\title{Broadening global families of anti-plane shear equilibria}
\author[T. Hogancamp]{Thomas Hogancamp}
\address{Department of Mathematics, University of Missouri, Columbia, MO 65211}
\email{thvf6@mail.missouri.edu}
\subjclass{35B32,74B20}
\begin{abstract}
    We develop a global bifurcation theory for two classes of nonlinear elastic materials. It is supposed that they are subjected to anti-plane shear deformation and occupy an infinite cylinder in the reference configuration. Curves of solutions to the corresponding elastostatic problem are constructed using analytic global bifurcation theory. The curve associated with first class is shown to exhibit broadening behavior, while for the second we find that the governing equation undergoes a loss ellipticity in the limit. A sequence of solutions undergoes broadening when their effective supports grow without bound. This phenomena has received considerable attention in the context of solitary water waves; it has been predicted numerically, yet it remains to be proven rigorously. The breakdown of ellipticity is related to cracks and instability making it an important aspect of the theory of failure mechanics.    
\end{abstract}
\maketitle

\begin{section}{Introduction}
Our primary goal is to rigorously prove the existence of interesting families of equilibria far from the reference configuration in the context of nonlinear elastostatics. Global bifurcation theorems for nonlinear elasticity have been established in, for example,  \cite{Healey_Simpson}, \cite{healey1997unbounded} and \cite{Healey_2018}. Due to the generality of the systems these authors consider, one must often accept that several alternatives may hold along the resulting global continuum. Efforts to develop global theories with complete characterizations have been limited. As one might expect, this requires some restrictions on the material, domain, and deformation. To this end, we focus on materials whose reference configuration is an unbounded cylinder that are in a state of anti-plane shear displacement and subjected to a parameter dependent body force. A material is said to be in a state of \textit{anti-plane shear} provided the deformation is of the form 
\begin{equation}\label{anti-plane_deform}
    \text{id} + u(x,y)e_{3}.
\end{equation} 
The relative simplicity of \eqref{anti-plane_deform} makes the study of such deformations an important pilot problem. For example, it is in this context that Saint-Venant's principle for nonlinear elasticity was first probed (see \cite[Section~6]{Horgan_antiplane}). Our investigation into broadening and ellipticity breakdown are in the same spirit. 

As with many global theories, the natural first step is to construct a perturbative family. We begin by further developing the local bifurcation theory established in \cite[Section 3]{chen2019center}. With this in hand, we employ the analytic global bifurcation theory of \cite{chen2018existence} to obtain a branch of solutions to the corresponding static equilibria problem that abides by a series of alternatives, which are in the same spirit as the ones mentioned above, that hold for a large class of materials. Sharp results are obtained by imposing reasonable assumptions on the material and body force; one set of conditions ensures broadening, while another leads to a loss of ellipticity. We make extensive use of monotonicity properties of solutions, elliptic type estimates, and a conserved quantity of the system in order to develop this global theory.    

\textit{Broadening} is characterized by the existence of a sequence of solutions to the relevant static equilibria problem, for which each element is uniformly bounded in an appropriate H\"{o}lder norm and decays to zero in the unbounded direction of the cylinder, yet does not admit a uniformly convergent subsequence. Such a sequence would fail to be uniformly spatially localized. It is in this sense that the displacements become broad. We also mention that after appropriately translating each element of such a sequence that one obtains a front type solution in the limit; see Figure~\ref{broadeningpic}. Broadening has been a topic of interest in the study of interfacial solitary water waves since at least the 1980s. Amick and Turner developed a global bifurcation theory which includes broadening as a possible alternative in \cite{amick_turner}, while Turner and Vanden-Broeck  predicted the phenomena numerically in \cite{turner1988broadening}. Whether broadening does indeed occur for such waves is still an open question. To the best of our knowledge, we are the first to rigorously construct a family of solutions that exhibit broadening in the PDE context. 

A loss of ellipticity occurs when the governing equations change type. This is possible for some materials subjected to subjected to deformations with sufficiently large gradients. Knowles explores the relationship between ellipticity and crack formation for nonlinear elastics in \cite{knowles1977finite} and again with Sternberg in \cite{knowles1981anti}. We construct families of solutions whose maximum deformation gradient must limit to the critical state where ellipticity breaks down. As far as we know, no previous global construction has been shown to exhibits such behavior. 

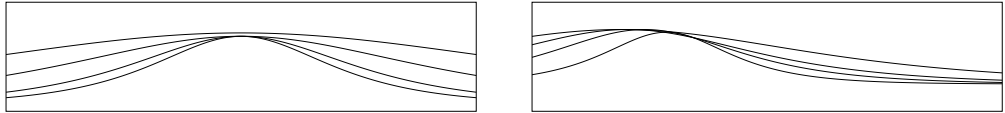
\begin{figure}
\centering
    \begin{tikzpicture}[scale=0.6]
      \begin{axis}[width=12cm, height =4cm, xmin=-300,xmax=300, ymin=-.1, ymax =1.5, yticklabels = \empty , xticklabels= \empty, ytick=\empty, xtick=\empty] 
    \addplot[domain =-300:300, samples =300] {1/(cosh(0.01*x))}; 
    \addplot[domain= -300: 300, samples=300] {1/cosh(0.008*x))}; 
    \addplot[domain = -300: 300, samples =300] {1.001/cosh(0.005*x))}; 
    \addplot[domain = -300: 300, samples =300] {1.05/cosh(0.003*x))}; 
     \end{axis}
    \end{tikzpicture}
    \hspace{0.5cm}
\begin{tikzpicture}[scale=0.6]
  \begin{axis}[width=12cm, height =4cm, xmin=-130,xmax=150, ymin=-0.5, ymax =1.5, yticklabels = \empty , xticklabels= \empty, ytick=\empty, xtick=\empty, 
  ] 
  \addplot[domain=-150:150, samples =300] {0.95/cosh(0.03*((x+50)))};
  \addplot[domain=-150:150, samples =300] {1/cosh(0.02*((x+63)))};
  \addplot[domain=-150:150, samples =300] {1/cosh(0.01*((x+77)))};
  \addplot[domain= -150:150, samples =300] {1/cosh(0.015*((x+73)))};
  \end{axis}
\end{tikzpicture}
    \caption{Left: Broadening along the center line $y=0$. Right: Shifted functions converging to a front along the center line $y=0$.}
    \label{broadeningpic}
\end{figure}

\begin{subsection}{The problem}
Consider a homogeneous, isotropic, incompressible, hyperelastic material occupying the region $\mathcal{D}:=\Omega \times \real$, where $\Omega = \real\times (-\frac{\pi}{2},\frac{\pi}{2})$. Let $u$ be the unknown displacement as in \eqref{anti-plane_deform}. Assume that $u =0$ on $\partial \Omega$, which may be interpreted physically as a clamped boundary condition. The structure of the static equilibrium equations is largely determined by the strain energy density function $W$. It is well known that for these materials $W=W(I_{1}, I_{2},I_{3})$, where $I_{1,2,3}$ are the principal invariants of the Cauchy--Green tensor, and that $I_{1}=I_{2}=3+|\nabla u|^{2}$ for anti-plane shear deformations (see \cite[Section 4]{Horgan_antiplane}). Also, note that incompressibility implies $I_{3}=1$.  Let us consider generalized neo-Hookean materials, in which case $W$ depends on $I_{1}$ alone. We write
\begin{gather*}
    W(I_{1},I_{2}, I_{3}) = : \overline{W}(I_{1})
\end{gather*}
to simplify the notation. Finally, let 
\begin{gather*}
    \mathcal{W}(q) :=  \overline{W}(3+q),   
\end{gather*}
where $\frac{1}{2}\mathcal{W}(q)$ is the so called \textit{modulus of shear} at amount of shear $q$ \cite{horgan1981effect}. 
 
We also suppose, following for example \cite{Healey_Simpson}, \cite{Healey_2018}, and \cite{chen2019center}, that a parameter dependent live load acts on the material. Let $b=b(z,\lambda)$ denote the associated force density. Both $\mathcal{W}$ and $b$ are required to be analytic in their arguments. Near the reference configuration, it is assumed that they have the expansions 
\begin{subequations}
\label{analytic_assumptions}
\begin{align}
    \mathcal{W}(q) = q+c_{1}q^2+c_{2}q^{3}+O(|q|^{4})& \label{w}\\
    b(z,\lambda) = (\lambda -1)z+b_{1}z^{3}+O(|z|^{5})& \label{b}
\end{align}
\end{subequations}
where $z$ is displacement and $\lambda$ the parameter of the live load. We consider the case in which $b_{1} \leq 0$, $c_{1}<0$, and $b$ is odd in $z$. The more general assumption that $b$ is odd in $z$ and $b_{1}+2c_{1}<0$ is shown to be a requirement for the existence of spatially localized solutions near the reference configuration in \cite{chen2019center}.

In general, the equations that describe anti-plane shear are an over-determined system consisting of three equations and two unknowns. Knowles gives necessary conditions for non-trivial states of anti-plane shear in the absence of body forces in \cite{Knowles_incompressible}. As he points out, generalized neo-Hookean materials always satisfy these conditions, and hence the governing equations are reduced to a single scalar PDE:
\begin{empheq}[left=\empheqlbrace]{align}
\label{cub}
\begin{split}
        \nabla \cdot (\mathcal{W}'(|\nabla u|^{2})\nabla u)-b(u,\lambda)&=0 \qquad \text{in} \; \Omega\\  
         u&=0 \qquad \text{on} \; \partial \Omega. 
\end{split}
\end{empheq}
Naturally, the structure of both $\mathcal{W}$ and $b$ will have a great effect on the qualitative features of the equilibria.  

The first elastic model we consider, which we refer to as Model I, is equation \eqref{cub} with a corresponding $\mathcal{W}$ satisfying 
\begin{subequations} \label{w_props_1}
\begin{align}
    \mathcal{W}'(q)+2q\mathcal{W}''(q) >\xi_{1}>0, \; \; q\geq 0&  \label{Wcond}\\
    q+c_{1}q^{2}+c_{2}q^{3} \leq \mathcal{W}(q), \; \; q \geq 0.& \label{wgrowth2}
\end{align}
\end{subequations}
Note that \eqref{Wcond} and \eqref{w} force the relation $c_{1}^{2} < \frac{5}{3} c_{2}$. Condition \eqref{Wcond} ensures that \eqref{cub} is uniformly elliptic regardless of the magnitude of the deformation gradient. The combined properties \eqref{w_props_1} are used to establish important a priori estimates. There is no universal choice for the growth conditions of $\mathcal{W}$, but we mention that polynomial models are common in the applied literature. In particular, the reduced polynomial model for incompressible materials has strain energy density given explicitly by:
\begin{gather}\label{yeoh}
\mathcal{W}(q)=\sum_{i=1}^{n}C_{i}q^{i}.
\end{gather}
After normalization so that $C_{1}=1$, we find that for a large class of coefficients \eqref{yeoh} will satisfy the assumptions of Model I. When $n=3$, \eqref{yeoh} becomes the widely used Yeoh model \cite{yeoh1993some}. The parameters of the Yeoh model may be chosen so that $C_{2}<0$, which is one of our assumptions, in order to capture some of the experimental properties of rubber \cite{yeoh1993some}. Our final assumption for Model I is that the the live load $b$ satisfies the following conditions: 
\begin{subequations}\label{both_b}
\begin{align}
    (\lambda -1)z +b_{1}z^{3}\leq b(z,\lambda), \;\; \text{for} \; \; z\geq 0 & \label{bcond} \\ 
    -b_{z}(0,\lambda) < 1, \; \; \text{for} \; \; \lambda>0&. \label{bz}
\end{align}
The condition \eqref{bcond} is used to help obtain a priori estimates, and \eqref{bz} is a requirement of the local theory for homoclinic solutions in \cite{chen2019center}.
\end{subequations}

We also consider a second model, Model II, which is again governed by equation \eqref{cub}, but where $\mathcal{W}$ satisfies 
\begin{subequations}\label{wcond_2}
\begin{align}
        \mathcal{W}'(q)+2q\mathcal{W}''(q)>0 \; \text{for all} \; q \in [0,q_{1})& \label{wnondegen}\\
        \mathcal{W}'(q)+2q\mathcal{W}''(q) \to 0 \;\; \text{as} \;\; q \to q_{1}^{-}& \label{wdegen} \\
        q\mathcal{W}'(q)-\mathcal{W}(q) < 0, \;\text{for} \; \; q>0. &\label{wdegendamp}
\end{align}
\end{subequations}
Here, \eqref{wnondegen} and \eqref{wdegen} mean simply that \eqref{cub} is elliptic so long as $|\nabla u|^{2}<q_{1}$, and that \eqref{cub} loses ellipticity as $|\nabla u|^{2} \to q_{1}^{-}$. Furthermore, in this case, we suppose that $b$ is concave in $z$ and satisfies \eqref{both_b}. Condition \eqref{wdegendamp} along with the concavity of $b$ will be used to help rule out broadening for Model II. 

Let us now consider a basic instantiation of Model II. The functions
\begin{gather}
\mathcal{W}(q) = q+c_{1}q^{2}, \;\; b(\lambda, z) = (\lambda - 1)z , 
\end{gather}
with $c_{1}<0$, correspond to a ``softening" elastic material undergoing simple harmonic forcing. The criteria for a softening material in the present notation is simply $\mathcal{W}''(q)<0$, for all $q>0$. See \cite{horgan1981effect} for more details on hardening or softening materials subject to anti-plane shear. 
\end{subsection}
\begin{subsection}{Main results}
The primary contribution of this paper is the construction of global families of solutions to \eqref{cub} that either broaden or lose ellipticity. 
\begin{theorem}[Model I]
\thlabel{thm1}
There is a curve $\mathcal{C}^{I}$ of solutions to \eqref{cub}, under the assumptions of Model I, admitting the $C^{0}$ parameterization 
\begin{gather*}
\mathcal{C}^{I} = \{ (u(s), \lambda(s)) : 0<s<\infty \} \subset C_{\textup{b}}^{3+\alpha}(\overline{\Omega})\times(0,\infty)
\end{gather*}
and $(u(s),\lambda(s))\to (0,0)$ as $s\to 0^{+}$. Moreover, we have that $\mathcal{C}^{I}$ satisfies the following:  
\begin{enumerate}[label=(\alph*),font=\upshape]
\item\textup{(Symmetry and monotonicity)} Each $(u(s),\lambda(s)) \in \mathcal{C}^{I}$ is monotone in the sense that \begin{gather*}
    \partial_{x}u(s) > 0 \; \text{for} \;x>0 \\ 
    \partial_{y}u(s) <0 \; \text{for} \; y>0, 
\end{gather*}
and $u(s)$ is even in both $x$ and $y$. \label{sm}
\item \textup{(Analyticity)} The curve $\mathcal{C}^{I}$ is locally real analytic. \label{analyticity}
\item \textup{(Bounds on $\lambda$)} \label{th1boundslambda} There exists some $0<\lambda_{1}^{-}<\lambda_{1}^{+}<\infty$ for which $s\gg 1$ implies 
\begin{gather*}
\lambda_{1}^{-}<\lambda(s) <\lambda_{1}^{+}.
\end{gather*} 
\item\textup{(Bounds on displacement)} There exists $C=C(c_{1},c_{2},b_{1})>0$ for which 
\begin{gather*}
    \sup_{s\geq 0}|u(s)|_{3+\alpha} \leq C.
\end{gather*} \label{bounds_displacement}
\item \textup{(Broadening)} There is a sequence $\{ (u_{n}, \lambda_{n})\} \subset \mathcal{C}^{I}$, and a sequence $\{x_{n}\}$, with $x_{n} \to \infty$, such that 
\begin{gather*}
     u_{n}(\cdot + x_{n}, \cdot) \xrightarrow{C^{3}_{\textup{loc}}(\overline{\Omega})} \tilde{u} \in C_{\textup{b}}^{3+\alpha}(\overline{\Omega}),
\end{gather*}
where $\tilde{u}$ is a solution to \eqref{cub}, $\tilde{u} \not\equiv 0$, and $\partial_{x}\tilde{u} \leq 0$. \label{broadening}
\end{enumerate}
\end{theorem}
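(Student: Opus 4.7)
The strategy is to combine the local bifurcation result from \cite{chen2019center} with the analytic global bifurcation machinery of \cite{chen2018existence}, and then show that the Model I hypotheses are strong enough to reduce the list of global alternatives to broadening alone.

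\emph{Global continuation.} Recast \eqref{cub} as $F(u,\lambda)=0$ where
\begin{equation*}
F(u,\lambda) := \nabla\cdot\bigl(\mathcal{W}'(|\nabla u|^{2})\nabla u\bigr) - b(u,\lambda)
\end{equation*}
acts between appropriately chosen (weighted) H\"older spaces on $\overline{\Omega}$ in which $F$ is analytic, its linearization along the local branch is Fredholm of index zero, and $F$ is proper on closed bounded subsets of its admissible domain. Invoking the analytic global bifurcation theorem of \cite{chen2018existence} continues the local curve to $\mathcal{C}^{I}$, yielding \ref{analyticity} and the standard trichotomy: for any sequence $s_{n}\to\infty$ one of (i) $|u(s_{n})|_{3+\alpha}+|\lambda(s_{n})|\to\infty$, (ii) $\max_{\overline{\Omega}}|\nabla u(s_{n})|^{2}$ reaches the ellipticity threshold, or (iii) $\{u(s_{n})\}$ admits no $C^{3+\alpha}(\overline{\Omega})$-precompact subsequence must hold. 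Alternative (ii) is immediately vacuous under \eqref{Wcond}.

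\emph{Nodal properties and a priori bounds.} Evenness in both $x$ and $y$ is built into the leading eigenfunction of the linearization (proportional to $\cos y$ and to an even function of $x$) and propagates along $\mathcal{C}^{I}$ by the standard open/closed argument for analytic curves. The strict monotonicity in \ref{sm} then follows from a moving-plane reflection argument in the strip, powered by uniform ellipticity \eqref{Wcond}, the clamped boundary, and the odd-monotone structure \eqref{both_b} of $b$. For the displacement bound \ref{bounds_displacement}, I would first derive a uniform $L^{\infty}$-estimate by combining the maximum-principle inequality at an interior extremum -- where \eqref{cub} forces $b(u(x_{0}),\lambda)\le 0$, so that \eqref{bcond} gives $(\lambda-1)+b_{1}u(x_{0})^{2}\le 0$ -- with an energy identity obtained by testing \eqref{cub} against $u$ and invoking \eqref{wgrowth2} from below, \eqref{bcond} together with $b_{1}\le 0$ from above, and the Poincar\'e inequality in the $y$-variable. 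Schauder theory, available thanks to \eqref{Wcond}, then lifts this to $C^{3+\alpha}$, ruling out the displacement half of (i). The same maximum-principle inequality delivers the upper bound $\lambda(s)\le 1+|b_{1}|\,|u(s)|_{\infty}^{2}\le\lambda_{1}^{+}$, while a lower bound $\lambda(s)\ge\lambda_{1}^{-}>0$ for $s\gg 1$ follows by contradiction: a sequence with $\lambda(s_{n})\to 0$ would, via the $C^{3+\alpha}$-estimate, produce a limiting solution at the bifurcation value, forcing $\mathcal{C}^{I}$ to return to the origin in contradiction with its arclength parameterization. This eliminates (i) entirely and yields \ref{th1boundslambda}.

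\emph{Broadening.} Only alternative (iii) survives for large $s$. Pick any $(u_{n},\lambda_{n})\in\mathcal{C}^{I}$ with $s_{n}\to\infty$. Arzel\`a--Ascoli applied to the uniform $C^{3+\alpha}$-bound produces a subsequence converging in $C^{3}_{\mathrm{loc}}(\overline{\Omega})$, and (iii) says this convergence cannot be uniform on $\overline{\Omega}$; hence there exist $(x_{n},y_{n})\in\Omega$ with $|u_{n}(x_{n},y_{n})|\ge\delta>0$ for some fixed $\delta$. Evenness allows $y_{n}\in[0,\pi/2)$ and $x_{n}\ge 0$, while the strict monotonicity together with decay of the local limit on compact sets forces $x_{n}\to+\infty$. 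The translates $\widetilde{u}_{n}:=u_{n}(\,\cdot+x_{n},\cdot)$ inherit the $C^{3+\alpha}$-bound, so along a further subsequence (chosen so that $y_{n}\to y_{\infty}$ and $\lambda_{n}\to\widetilde{\lambda}\in[\lambda_{1}^{-},\lambda_{1}^{+}]$) they converge in $C^{3}_{\mathrm{loc}}(\overline{\Omega})$ to a solution $\widetilde{u}\in C^{3+\alpha}_{\mathrm{b}}(\overline{\Omega})$ of \eqref{cub} at $\widetilde{\lambda}$ with $|\widetilde{u}(0,y_{\infty})|\ge\delta$, hence $\widetilde{u}\not\equiv 0$. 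Finally, the strict monotonicity of $u_{n}$ on $\{x>0\}$ passes to $\widetilde{u}$ in the limit because for any fixed $x$ the shifted point $x+x_{n}$ is eventually positive, yielding the sign of $\partial_{x}\widetilde{u}$ claimed in \ref{broadening}. The technically delicate piece in this program is the uniform $L^{\infty}$-estimate underlying \ref{bounds_displacement}: closing it requires carefully balancing the gain from \eqref{wgrowth2} and the quartic damping in \eqref{bcond} against the potentially destabilizing quadratic term $(\lambda-1)u^{2}$ for large $\lambda$, \emph{uniformly} in $\lambda$, and every subsequent ingredient -- the $\lambda$-bounds, the Schauder step, and the extraction of the broadening limit -- rests on it.
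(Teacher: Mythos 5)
Your architecture mirrors the paper's (local curve via \cite{chen2019center}, analytic global continuation via \cite{chen2018existence}, nodal properties propagated by an open/closed argument, a priori bounds eliminate blow-up, broadening survives), so the high-level plan is sound. But the two load-bearing steps you flag as ``delicate'' are precisely where your sketch does not close.

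\textbf{The uniform $L^\infty$ (and $C^{3+\alpha}$) estimate.} Testing \eqref{cub} against $u$ over all of $\Omega$ produces $\int_\Omega \mathcal{W}'(|\nabla u|^2)|\nabla u|^2 + b(u,\lambda)u$, but the growth hypothesis \eqref{wgrowth2} controls $\mathcal{W}(q)$, not $\mathcal{W}'(q)q$, so the coercivity you need from the elastic term is not directly available; and after applying Poincar\'e the leftover $(\mathcal{W}'(|\nabla u|^2)-1)|\nabla u|^2$ term can be negative for small gradients since $c_1<0$. Your maximum-principle observation at the interior extremum $x_0$ gives only the implication $(\lambda-1)+b_1 u(x_0)^2\le 0$, which relates $\lambda$ to $u(x_0)$ but bounds neither. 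The paper instead derives a conserved quantity --- the ``Hamiltonian''
\begin{equation*}
\mathcal{H}(u,\lambda;x)=\int_{-\pi/2}^{\pi/2}\left(\tfrac12\mathcal{W}(|\nabla u|^2)-\mathcal{W}'(|\nabla u|^2)u_x^2+B(u,\lambda)\right)dy,
\end{equation*}
shown to be constant in $x$ and hence zero --- and evaluates it at $x=0$ where, by evenness, $u_x\equiv 0$. Plugging in \eqref{wgrowth2} and \eqref{bcond} there, Wirtinger's inequality kills the dangerous $(\lambda-1)\|u\|_2^2+\|u_y\|_2^2$ combination (and, crucially, absorbs the sign-indefinite $\lambda$-dependence), and H\"older plus $c_2>c_1^2\cdot\tfrac{3}{5}>0$ forces a bound on $\|u_y(0,\cdot)\|_6$. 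That is the mechanism that makes the estimate uniform in $\lambda$; a whole-domain energy identity does not reproduce it. Separately, lifting $|u|_0$ to $|u|_{3+\alpha}$ is not a pure Schauder step: you first need a bound on $|\nabla u|_0$, which the paper obtains from a Payne--Philippin $P$-function maximum principle (their $P(x,y)$ involving $\int_0^{|\nabla u|^2}(\mathcal{W}'+2\xi\mathcal{W}'')\,d\xi$), and \eqref{Wcond} is exactly what converts the $P$-bound into a bound on $|\nabla u|^2$. Without that bridge the Schauder bootstrap has nothing to start from.

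\textbf{Ruling out $\lambda\to 0$.} Your contradiction argument (``the $C^{3+\alpha}$-estimate produces a limiting solution at the bifurcation value, forcing $\mathcal{C}^I$ to return to the origin'') skips the hard point: a $C^{3+\alpha}$-bound gives only $C^3_{\mathrm{loc}}$-compactness, and the limit of $u(s_n)$ (or of its translates) may be a \emph{nontrivial} solution or a monotone front at $\lambda=0$ rather than the trivial one. The paper's Lemma on this (your $\lambda\to 0$ step corresponds to its Lemma~\thref{boundsL}) rules out such a nontrivial limit by multiplying \eqref{cub} by $\cos(y)$, integrating over $(R_1,R_2)\times(-\pi/2,\pi/2)$, sending $R_2\to\infty$, and exploiting the sign of $u_y\varphi_y$ and the decay of $\mathcal{W}'-1$ and $b-(\lambda-1)u$ at large $x$; only then does one get $u_n\to 0$ in $X$, contradict the no-return alternative of the global theorem, and conclude $\lambda(s)\ge\lambda_1^->0$ for $s\gg1$. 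You should spell out this elimination of nontrivial limits explicitly, as the broadening alternative you are trying to prove in part (e) is precisely the scenario that prevents the naive argument from working.

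Once those two gaps are filled, the extraction of the broadening limit in your final paragraph is essentially the paper's argument and is fine.
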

We call $\tilde{u}$ in the broadening alternative a front since it has distinct limiting states as $x\to \infty$ and $x\to -\infty$. In fact, we will see that $\tilde{u}$ must decay to $0$ as $x \to \infty$ but limit to a non-trivial $x$-independent solutions of \eqref{cub} as $x \to -\infty$. Fronts are of great interest in, for example, the study of reaction-diffusion systems, hydrodynamics, and mathematical biology. In \cite{chen2020global}, the authors consider a problem similar to the one posed here. However, they assume that  $b_{1}+2c_{1}>0$ and are able to construct a global family of front-type solutions whose displacement grow arbitrarily large. Note that our assumption $b_{1},c_{1}<0$ is not quite complementary to the one above; however, when taken together these conditions exhaust the global theory for a wide range the parameters.     

Our second main result concerns Model II. As in \thref{thm1}, we establish monotonicity, local analyticity, and an upper bound on $\lambda$ along the global curve. However, in this case the strictly positive lower bound on $\lambda$ and the upper bound on the displacement are lost. Furthermore, the loss of ellipticity, which is the distinctive feature of \thref{thm2}, is an impossibility under the assumptions of Model I.  
\begin{theorem}[Model II]
\thlabel{thm2}
There is a curve $\mathcal{C}^{II}$ of solutions to \eqref{cub}, under the assumptions of Model II, with $C^{0}$ parameterization
\begin{gather*}
    \mathcal{C}^{II}=\{ (u(s), \lambda(s)) : 0<s<\infty \} \subset C_{\textup{b}}^{3+\alpha}(\overline{\Omega})\times(0,\infty)
\end{gather*}
and $(u(s),\lambda(s))\to (0,0)$ as $s\to 0^{+}$. Moreover, we have that $\mathcal{C}^{II}$ satisfies the following:  
\begin{enumerate}[label=(\alph*), font=\upshape]
\item\textup{(Symmetry and monotonicity)} Each $(u(s),\lambda(s)) \in \mathcal{C}^{I}$ is such that $u(s)$ is monotone in the sense that \begin{gather*}
    \partial_{x}u(s) > 0 \; \text{for} \;x>0 \\ 
    \partial_{y}u(s) <0 \; \text{for} \; y>0, 
\end{gather*}
and $u(s)$ is even in both $x$ and $y$. \label{sm2}
\item \textup{(Analyticity)} The curve $\mathcal{C}^{II}$ is locally real analytic. \label{analyticity2}
\item \textup{(Bounds on $\lambda$)} \label{th2boundslambda} There exists some $0<\lambda_{2}^{+}<\infty$ for which $s\gg 1$ implies 
\begin{gather*}
0<\lambda(s) <\lambda_{2}^{+}
\end{gather*} 
\item \textup{(Loss of ellipticity)} Following $\mathcal{C}^{II}$ to its extreme, the system loses ellipticity in that 
\begin{gather}\label{elliptic_loss_quant}
     \lim_{s\to \infty}\inf_{\overline{\Omega}}\big(\mathcal{W}'(q)+2q\mathcal{W}''(q)\big)\big\vert_{q=|\nabla u(s)|^{2}} = 0
\end{gather} \label{loss_ellipt}
\end{enumerate}

\end{theorem}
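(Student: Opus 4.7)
The proof of \thref{thm2} follows the same architecture as \thref{thm1}. The plan is to start at $(0,0)$ with the local bifurcation result of \cite[Section~3]{chen2019center}, and extend the resulting analytic arc to a maximal analytic curve $\mathcal{C}^{II}$ by the analytic global bifurcation theory of \cite{chen2018existence}. This immediately yields \ref{analyticity2}, together with a short list of terminal alternatives (blowup of $\lambda(s)$ or $|u(s)|_{3+\alpha}$, failure of compactness, degeneration of the principal symbol, or return to the bifurcation point). The nodal symmetry \ref{sm2} is then propagated along the curve by the same moving-plane argument used in \thref{thm1}\ref{sm}, which applies at each point of $\mathcal{C}^{II}$ where $|\nabla u|^2<q_1$. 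The upper bound in \ref{th2boundslambda} comes from testing \eqref{cub} against the positive Dirichlet eigenfunction $\cos y$ on $(-\pi/2,\pi/2)$ and invoking the concavity of $b(\cdot,\lambda)$ together with \eqref{bcond}; the lower bound $\lambda(s)>0$ is obtained by testing against $u(s)$ itself, as in \thref{thm1}\ref{th1boundslambda}.

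With these properties established, the remaining task is to identify the operative terminal alternative. Blowup of $\lambda$ is excluded by \ref{th2boundslambda}; blowup of $|u(s)|_{3+\alpha}$ under uniform ellipticity is ruled out by Schauder estimates combined with an a priori supremum bound coming from \eqref{bcond} and the maximum principle. Consequently any compactness failure must manifest either as degeneration of the ellipticity constant---precisely \eqref{elliptic_loss_quant}---or as broadening in the sense of \thref{thm1}\ref{broadening}. Forcing \ref{loss_ellipt} therefore reduces to excluding broadening under the Model~II hypotheses.

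This last step is the real technical heart, and it is where \eqref{wdegendamp} and the concavity of $b$ genuinely enter. The tool is the $x$-translation Noether flux
\begin{equation*}
F(x):=\int_{-\pi/2}^{\pi/2}\Bigl[\mathcal{W}'(|\nabla u|^2)\,u_x^2-\tfrac{1}{2}\mathcal{W}(|\nabla u|^2)-B(u,\lambda)\Bigr]dy,\qquad B(z,\lambda):=\int_0^z b(s,\lambda)\,ds,
\end{equation*}
which is constant in $x$ on every solution of \eqref{cub} satisfying the clamped condition, since the Dirichlet condition kills the transverse component $u_x\mathcal{W}'(|\nabla u|^2)u_y$ on $\partial\Omega$. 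If broadening occurred, then as in \thref{thm1}\ref{broadening} one would extract a limiting front $\tilde u$ with $\tilde u(x,\cdot)\to 0$ as $x\to+\infty$ and $\tilde u(x,\cdot)\to U(\cdot)\not\equiv 0$ as $x\to-\infty$, where $U$ solves $(\mathcal{W}'(U'^2)U')'=b(U,\lambda)$ with $U(\pm\pi/2)=0$. Letting $x\to\pm\infty$ in $F\equiv\mathrm{const}$ produces
\begin{equation*}
\int_{-\pi/2}^{\pi/2}\Bigl[\tfrac12\mathcal{W}(U'^2)+B(U,\lambda)\Bigr]dy=0,
\end{equation*}
while testing the $U$-equation against $U$ yields
\begin{equation*}
\int_{-\pi/2}^{\pi/2}\Bigl[U'^2\mathcal{W}'(U'^2)+Ub(U,\lambda)\Bigr]dy=0.
\end{equation*}
Subtracting twice the first from the second collapses to
\begin{equation*}
\int_{-\pi/2}^{\pi/2}\Bigl\{\bigl[U'^2\mathcal{W}'(U'^2)-\mathcal{W}(U'^2)\bigr]+\bigl[Ub(U,\lambda)-2B(U,\lambda)\bigr]\Bigr\}dy=0.
\end{equation*}
By \eqref{wdegendamp} the first bracket is strictly negative wherever $U'\neq 0$. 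For the second, set $g(z):=zb(z,\lambda)-2B(z,\lambda)$; then $g(0)=g'(0)=0$ and $g''(z)=zb_{zz}(z,\lambda)$, so the concavity of $b$ in $z$ gives $g\le 0$ on $[0,\infty)$, and the oddness of $b$ makes $g$ even, so $g(U)\le 0$ pointwise. Since any nontrivial $U$ with $U(\pm\pi/2)=0$ has $U'\neq 0$ on a set of positive measure, the integrand is strictly negative, a contradiction. The main obstacle is precisely this sign arithmetic---finding the correct linear combination of the flux identity and the testing identity so that both pieces of the integrand are forced to be nonpositive by the Model~II structure, and verifying that the strict inequality survives the passage from the sequence to the limiting profile.
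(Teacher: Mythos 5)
Your proposal takes essentially the same approach as the paper. The decisive step---ruling out broadening for Model II so that the only remaining terminal alternative is degeneration of the ellipticity modulus---is carried out exactly as in the paper's Lemma~\ref{dontlose}: you introduce the $x$-translation Noether flux (the negative of the paper's conserved quantity $\mathcal{H}$), pass to the limiting front profile $U$, test the transversal ODE against $U$, take the combination ``second identity minus twice the first,'' and then observe that \eqref{wdegendamp} forces the $\mathcal{W}$-bracket to be nonpositive while concavity of $b$ in $z$ (via the $g(z)=zb-2B$ computation) forces the $b$-bracket to be nonpositive, yielding the contradiction. The surrounding architecture---global analytic continuation from the local curve, propagation of the nodal pattern, a priori Schauder bounds on $\mathcal{C}^{II}_\delta$, and elimination of the alternatives of the Chen--Walsh--Wheeler theorem---also matches the paper. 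The only small imprecision is in your description of the auxiliary $|u|_0$ bound for Model~II: the paper obtains it from the constraint $|\nabla u|^2<q_1$ on $\mathcal{O}_\delta$ together with the clamped boundary condition (Remark~\ref{bounds_u_II}) rather than from \eqref{bcond} and the maximum principle, but this does not affect the structure of the argument.
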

We note that working under only the assumptions of \eqref{analytic_assumptions}, \eqref{bz}, and $b_{1},c_{1}<0$, our methods would show the existence of a global curve of solutions that satisfy \ref{sm2} and \ref{analyticity2} as above. Moreover, the condition $0< \lambda(s)$ is retained. The conditions \eqref{wgrowth2} and \eqref{bcond} are only used once (in Section~\ref{UR}). They help ensure global bounds on $|u(s)|_{0}$ and $|\lambda(s)|$, which will be shown to control $|u(s)|_{3+\alpha}$. Without $\eqref{wgrowth2}$ and $\eqref{bcond}$, we would be left with the alternatives (i) $\sup_{s\geq 0}|u(s)|_{0} \to \infty$ or $\sup_{s\geq 0}\lambda(s) \to \infty$; (ii) broadening occurs; or (iii) there is a loss of ellipticity in the limit, which may or may not coincide with either $\lambda(s) \to 0$ or $\lambda(s) \to \infty$. It seems that some restrictions on the growth of $\mathcal{W}$ and $b$ are required for a satisfactory global theory. Perhaps by another set of assumptions on $\mathcal{W}$ and $b$ not utilized here one could force a blow-up in either $|u(s)|_{0}$ or $\lambda(s)$.  

\end{subsection}

\begin{subsection}{History}
Let us briefly recall some of the relevant history. Healey and Simpson obtained global branches of static equilibria for a non-linear elastic mixed boundary value problems in \cite{Healey_Simpson}. This general theory includes alternatives such as a loss of ellipticity, failure of compatibility conditions, or a return to the trivial branch of solutions. Healey and Rosakis \cite{healey1997unbounded} construct unbounded solution branches, which are sometimes referred to as ``solutions in the large." This theory leaves open the possibility that the loading parameter or the norm of the deformation grows arbitrarily large as one follows the global curve. Each of these works are concerned with compressible elastics. Recently, Healey developed global bifurcation results for nonlinear incompressible elastics with conclusions similar to \cite{healey1997unbounded}. The displacements and domains in the theories mentioned above are more general than the ones used in this paper. However, note that each of these works are concerned with bounded domains. Because our problem is posed on an infinite cylinder, there are serious additional complications due to the lack of compactness properties for the underlying PDE. This difficulty is overcome with the analytic global bifurcation theory presented in \cite{chen2018existence}. These authors also recently developed a center manifold reduction to construct small solutions to the anti-plane shear problem in \cite{chen2019center}, which we use for our local bifurcation theory. We also mention the work of \cite{rabier2001global,gebran2009global}, which treat quasilinear elliptic PDE on the whole space using degree theoretic global bifurcation theory. In contrast to \cite{chen2018existence}, these authors impose assumptions that ensure local properness. Because broadening represents a loss of local properness we find the approach of \cite{chen2018existence} to be more natural in this context. 

A word is in order about our choice of domain. Firstly, we are interested in developing a global theory for homoclinic type solutions. Demonstrating broadening behavior is also of great interests to us, which requires the existence of a sequence of spatially localized functions whose effective supports grow without bound. A natural setting for either such analysis is an infinite cylinder. Moreover, there has been a considerable amount of work regarding exponential decay estimates for anti-plane shear on semi-infinite strips of the form $(0,\infty)\times(0,h)$; see \cite[Section 6]{Horgan_antiplane} and the reference therein for a good overview. Thus, interest in anti-plane shear deformations in unbounded domains is well established. 
\end{subsection}

\begin{subsection}{Preliminaries}
Let us fix some notation that will be used in the remainder of the paper. First, for $k \in \mathbb{N}$ and $\alpha \in (0,1)$, let  
\begin{gather*}
    C^{k+\alpha}_{\text{b}}(\overline{\Omega}) := \{ u \in C^{k}(\overline{\Omega}) \; : \; |u|_{k+\alpha}<\infty \},
\end{gather*}
where $C^{k}(\overline{\Omega})$ denotes the space of functions which are $k$ times continuously differentiable on $\Omega$ up to the boundary, and $|\cdot |_{k+\alpha}$ is the usual H\"{o}lder norm. Much of our analysis will be concerned with solutions whose derivatives decay uniformly to $0$, which leads us to consider: 
\begin{gather*}
    C^{k+\alpha}_{0}(\overline{\Omega}) := \left\{ u \in C_{\text{b}}^{k+\alpha}(\overline{\Omega}) \; : \; \lim_{r\to \infty}\sup_{|x|=r} |\partial_{\beta}u(x)|=0, \; \; 0\leq |\beta| \leq k\right\}.
\end{gather*}
Next, we define the Banach spaces $X$ and $Y$ by \begin{gather}
    X:= \{u \in C^{3+\alpha}_{\text{b,e}}(\overline{\Omega})\cap C^{2}_{0}(\overline{\Omega}) : u= 0 \; \text{on} \; \partial \Omega\}
\end{gather}
and 
\begin{gather}
    Y:= C_{\text{b,e}}^{1+\alpha}(\overline{\Omega})\cap C^{0}_{0}(\overline{\Omega})
\end{gather}
where the subscript e denotes evenness in the $x$ and $y$ variables. Equation $\eqref{cub}$ can be written in operator form as 
\begin{gather*}
    \mathcal{F}(u, \lambda)=0,
\end{gather*}
where $\mathcal{F}: X \times \mathbb{R} \to Y$ is real analytic. We will show that $\mathcal{C}^{I,II} \subset X \times \mathbb{R}$ (here, and in the sequel, $\mathcal{C}^{I,II}$ will be used to indicate that a statement holds for both $\mathcal{C}^{I}$ and $\mathcal{C}^{II}$). A detailed investigation of the linearized operator $\mathcal{F}_{u}$ along a local curve of solutions to \eqref{cub} is required in order to establish the existence of $\mathcal{C}^{I,II}$. The following spaces are useful for this task: 
\begin{gather}
    X_{\text{b}}:= \{ u \in C_{\text{b}}^{3+\alpha}(\overline{\Omega}) : u=0 \; \text{on} \; \partial \Omega\} \;\; \text{and} \;\; Y_{\text{b}}:=C^{1+\alpha}_{\text{b}}(\overline{\Omega}).
\end{gather}
Similarly, let $X_{0}:=\{ u \in C^{3+\alpha}_{0}(\overline{\Omega}): u =0 \; \text{on} \; \partial \Omega\}$ and $Y_{0}:= C_{0}^{1+\alpha}(\overline{\Omega})$. Finally, we define an exponentially weighted space that plays a role in the local bifurcation theory. The norm for this space is 
\begin{gather*}
    |f|_{C^{k+\alpha}_{\mu}(\Omega)} := \sum\limits_{\beta \leq k}| w_{\mu} \partial^{\beta}f|_{C^{0}} + \sum\limits_{|\beta|=k}|w_{\mu}|\partial^{\beta}f|_{\alpha}|_{C^{0}}
\end{gather*}
where $k \in \mathbb{n}$, $\alpha \in (0,1)$, $\mu \in \real$, and $w_{\mu}(x) := \text{sech}(\mu x)$. We may then define 
\begin{gather*}
    C^{k+\alpha}_{\mu}(\overline{\Omega}):= \{ f \in C^{k+\alpha}(\overline{\Omega}) \; : \; |f|_{C^{k+\alpha}_{\mu}(\Omega)} < \infty \}.
\end{gather*}
\end{subsection}
\begin{subsection}{Plan of the article}
In Section~\ref{localtheory}, we recall the existence of a local curve of solutions to \eqref{cub} established in \cite[Section~3]{chen2019center} and then prove some monotonicity and symmetry properties that will be important for the later analysis. This section ends by showing the linearized operator along the local curve is invertible; a fact which is essential to the global theory. In Section~\ref{globalsection}, we apply the global bifurcation theory of \cite{chen2018existence} to the local solutions. We also show that the monotonicity properties of the local curve are preserved. Bounds on $|u(s)|_{3+\alpha}$ and $\lambda$ are derived in Section~\ref{UR} through elliptic estimates, monotonicity, and a conserved quantity. Finally, in Section~\ref{main_proofs} we stitch together the previous work and systematically eliminate alternatives of the global bifurcation theorem to prove both \thref{thm1} and \thref{thm2}. 
\end{subsection}

\end{section}

\begin{section}{Local bifurcation}
\label{localtheory}
Our ultimate aim to construct non-peturbative solutions, but first this will require us to refine our understanding of the local theory. After establishing the existence of a local curve, we show some monotonicity and symmetry properties, which will be extended to the global curve in Section~\ref{mono}. The proof of existence for $\mathcal{C}^{I,II}$ will rely upon invertibility properties of the linearized operator $\mathcal{F}_{u}$ along the local curve; these are investigated at the end of the section. 

\subsection{Existence and uniqueness}
In \cite[Section 3]{chen2019center}, the authors establish the existence of a local curve of homoclinic solutions to $\eqref{cub}$ bifurcating from $(u,\lambda)=(0,0)$ under the assumptions 
\begin{empheq}[]{align}
\label{specialcase}
\begin{split}
     b(u,\lambda)=&(\lambda-1)u +b_{1}u^{3}\\  
     \mathcal{W}'(q)=&1+2c_{1}q
\end{split}
\end{empheq}
with $b_{1}+2c_{1}<0$. When this inequality is reversed, front-type solutions are instead obtained. The authors extend this argument to deal with more generalized $b$, including the form \eqref{b}, in \cite[Appendix~B.1]{chen2019center}. Those arguments can also be used to show the existence of local solutions under the more general assumptions of Model I and Model II. This is the content of the next theorem.  
\begin{theorem}
\thlabel{existenceuniqueness}
There exists an $\epsilon_{0}>0$ and a local $C^{0}$ curve
\begin{gather*}
    \mathcal{C}^{I,II}_{\textup{loc}}=\{(u^{\epsilon},\epsilon^{2}) \; : \; 0<\epsilon<\epsilon_{0}\} \subset X_{0}\times\mathbb{R}
\end{gather*}
of solutions to \eqref{cub}, corresponding to Model I or Model II, with the asymptotics
\begin{gather}
\label{smallsol}
    u^{\epsilon}(x,y) = a_{1}\epsilon\textup{sech}(\epsilon x)\cos(y)+O(\epsilon^{2})        \qquad \text{in} \; \; C^{3}_{\textup{b}}(\overline{\Omega}) 
\end{gather}
where $a_{1} = \dfrac{2}{\sqrt{3|b_{2}+2c_{1}|}}$. 
\end{theorem}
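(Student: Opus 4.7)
The strategy is to carry out the center-manifold reduction of \cite[Section~3]{chen2019center}, noting that the local theory depends only on the Taylor coefficients $c_{1}, c_{2}, b_{1}$ appearing in \eqref{analytic_assumptions} and not on the global structural conditions that distinguish Model I from Model II. Indeed, those authors already handle the special case \eqref{specialcase} and then in \cite[Appendix~B.1]{chen2019center} extend the argument to general analytic $b$; the same mechanism will cover general analytic $\mathcal{W}$.

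I would begin by recasting \eqref{cub} as a spatial evolution problem with $x$ the time-like variable and $y \in (-\pi/2, \pi/2)$ the transverse variable. Using the expansions \eqref{w} and \eqref{b}, the equation reads
\begin{gather*}
\Delta u + (1 - \lambda) u = N(u, \nabla u, \lambda),
\end{gather*}
where $N$ is analytic, vanishes to quadratic order in $(u, \nabla u)$, and has leading cubic contribution whose coefficients are determined entirely by $c_{1}$ and $b_{1}$. At the bifurcation point $\lambda = 0$, separating variables in the transverse Dirichlet eigenbasis $\{\cos((2k+1)y)\}_{k\geq 0}$ shows that only the first mode $\cos(y)$ yields a neutral double spatial eigenvalue at $\sigma = 0$; all higher modes produce hyperbolic spatial eigenvalues with a uniform spectral gap. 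This isolates a two-dimensional center subspace.

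Next, I would invoke the center-manifold theorem used in \cite[Section~3]{chen2019center} to produce a smooth invariant manifold parameterized by $(A, A')$ of small amplitude on which the dynamics is a reversible planar Hamiltonian flow. After the standard long-wave rescaling $\lambda = \epsilon^{2}$ and $A(x) = \epsilon \tilde A(\epsilon x)$, the reduced equation converges as $\epsilon \to 0$ to a normal form of the type
\begin{gather*}
\tilde A''(X) = \tilde A(X) - \tfrac{3}{4}(b_{1} + 2 c_{1})\, \tilde A(X)^{3},
\end{gather*}
whose small-amplitude homoclinic orbit exists precisely because $b_{1} \leq 0$ and $c_{1} < 0$ force $b_{1} + 2 c_{1} < 0$. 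The orbit is $\tilde A(X) = a_{1} \operatorname{sech}(X)$, and persistence for small $\epsilon > 0$ in the full reduced equation follows from reversibility and a transversality argument for the intersection of the stable and unstable manifolds of $0$, exactly as in \cite[Section~3]{chen2019center}. Analyticity of $\mathcal{W}$ and $b$, together with analyticity of the center-manifold reduction, then yields a real analytic family $\epsilon \mapsto (u^{\epsilon}, \epsilon^{2})$, while the exponential decay of $\operatorname{sech}$ and of the center-manifold correction confirm $u^{\epsilon} \in X_{0}$ (not merely $X_{\text{b}}$), producing the asymptotics \eqref{smallsol}.

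The main obstacle is verifying that replacing the special case \eqref{specialcase} by generic analytic $(\mathcal{W}, b)$ allowed under Model I or Model II does not interfere with any step of the reduction. The higher-order terms in the Taylor expansions of $\mathcal{W}$ and $b$ contribute analytic perturbations of higher order to $N$ and hence do not alter the leading-order reduced ODE, which sees only $c_{1}$ and $b_{1}$. The additional structural assumptions of Model I (\eqref{Wcond}, \eqref{wgrowth2}, \eqref{both_b}) and Model II (\eqref{wnondegen}--\eqref{wdegendamp}) are global in nature and play no role here, since the entire analysis is confined to a neighborhood of $(u, \lambda) = (0, 0)$ where they are automatically consistent with the Taylor data in \eqref{analytic_assumptions}.
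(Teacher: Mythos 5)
Your approach is essentially the same as the paper's: a center-manifold reduction following \cite[Section~3]{chen2019center} and \cite[Appendix~B.1]{chen2019center}, derivation of the reduced planar ODE, the long-wave rescaling $\lambda = \epsilon^2$, an explicit $\operatorname{sech}$ homoclinic at $\epsilon = 0$, and persistence via reversibility and transversality. You also correctly identify the key thing to verify, namely that the higher-order Taylor coefficients of $\mathcal{W}$ and $b$ beyond $c_1$, $b_1$ only perturb the reduced vector field at order four or higher and so do not affect the leading-order normal form; this is precisely what the paper checks by tracing the extra term $c_{2}\nabla\cdot(|\nabla u|^{4}\nabla u)$ through the $\Psi_{ijk}$ hierarchy.

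Two small points. First, your normal form has a sign slip: since $b_1 + 2c_1 < 0$, the reduced equation should read $\tilde A'' = \tilde A + \tfrac{3}{4}(b_1+2c_1)\tilde A^3 + R$ (so that the cubic coefficient is negative and a homoclinic to the origin exists); as written, $\tilde A'' = \tilde A - \tfrac{3}{4}(b_1+2c_1)\tilde A^3$ has a positive cubic coefficient and no homoclinic orbit. Second, you claim the family $\epsilon \mapsto (u^\epsilon, \epsilon^2)$ is real analytic, but the center-manifold coordinate map $\Psi$ furnished by \cite[Theorem~1.1]{chen2019center} is only of finite smoothness ($C^4$), so the reduction itself yields only a $C^0$ parameterization, which is all the theorem asserts; the local real analyticity of the curve is obtained later from the analytic global bifurcation machinery, not from the reduction.
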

\begin{proof}
We reparametrize with $\lambda =\epsilon^{2}$ for convenience. As mentioned above, an existence result was obtained in \cite[Section~3]{chen2019center} under the conditions \eqref{specialcase}. We will follow closely that proof and focus on the places where deviations are necessary to accommodate the more general form of $\mathcal{W}$ we consider. 

Let $L:= \mathcal{F}_{u}(0,0)$ and $L'$ be defined as the restriction of $L$ to $x$-independent functions ($L'$ is called the transversal linearized operator). The center manifold reduction result in \cite{chen2019center} requires that $0$ is a simple eigenvalue of $L'$. The operator $L$ corresponding to \eqref{specialcase}, Model I, or Model II is simply $\Delta+1$ as seen by the structure of \eqref{analytic_assumptions} and \eqref{specialcase}. Clearly $L'$ satisfies the requirements mentioned above. Now, the center manifold reduction given by \cite[~Theorem 1.1]{chen2019center} shows that solutions of \eqref{cub}, that lie in a sufficiently small neighborhood of the origin in $C_{\text{b}}^{2+\alpha}(\overline{\Omega})\times \mathbb{R}$ can be expressed as  
\begin{gather}
\label{smallu}
    u(x+\tau,y) = v(x)\varphi_{0}(y)+v'(x)\tau\varphi_{0}(y)+\Psi(v(x),v'(x), \epsilon)(\tau,y), 
\end{gather}
where $v(x):= u(x,0)$, $\varphi_{0}(y)$ generates the kernel of $L'$, and $\Psi: \real^{3} \to C^{3+\alpha}_{\mu}(\overline{\Omega})$ is a $C^{4}$ coordinate map. Here $\mu>0$ is a positive constant depending on the largest non-zero eigenvalue of $L'$. Moreover, if $(u,\epsilon^{2}) \in C^{3+\alpha}_{\text{b}}(\overline{\Omega})\times \mathbb{R}$ is any sufficiently small solution to \eqref{cub}, then, by \cite[~Theorem 1.1]{chen2018existence}, $v$ solves the second-order ODE 
\begin{gather}
\label{ODEv}
    v''=f(v,v',\epsilon^{2}), \qquad \text{where} \qquad f(A,B,\epsilon^{2}):= \dfrac{d}{dx^{2}}\bigg\rvert_{x=0}\Psi(A,B,\epsilon)(x,0).
\end{gather}
Thus, we are left to show that the change in $\Psi$ resulting from the conditions of Model I or Model II does not affect the existence or general form of $u^{\epsilon}$ in \eqref{smallsol}. Let us point out that $\Psi$ inherits the following symmetry properties from the original PDE \eqref{cub}: 
\begin{gather}\label{psi_sym}
    \Psi(-A,-B,\epsilon) = -\Psi(A,B,\epsilon) \qquad \text{and} \qquad \Psi(A,-B,\epsilon)(-x,y) = \Psi(A,B,\epsilon)(x, y).
\end{gather}
From \eqref{ODEv} it follows that 
\begin{gather} \label{f_sym}
    f(-A,-B) = - f(A,B) \qquad \text{and} \qquad f(A,-B) = f(A,B). 
\end{gather}

To derive an expression for $f$, we exploit \cite[Theorem 1.2]{chen2019center} to conclude that $\Psi$ admits a Taylor expansion of the form   
\begin{gather}
\label{psiexpand}
    \Psi(A,B,\epsilon) = \sum_{\mathcal{J}}\Psi_{ijk}A^{i}B^{j}\epsilon^{k}+\mathcal{R},
\end{gather}
where the index set $$\mathcal{J} =\{(i,j,k) \in \mathbb{N} \; : \; i+2j+k \leq 3, i+j+k \geq 2, i+j \geq 1\},$$ the coefficients $\Psi_{ijk}\in C_{\mu}^{3+\alpha}(\overline{\Omega})$, and the error term $\mathcal{R}$ is of order 
$O((|A|+|B|^{1/2}+\epsilon)^4)$ in $C^{3+\alpha}_{\mu}(\overline{\Omega})$. 

Combining \eqref{smallu} and \eqref{psiexpand} yields 
\begin{gather}\label{u_expand}
    u(x,y) = (A+Bx)\varphi_{0}(y)+\sum_{\mathcal{J}}\Psi_{ijk}A^{i}B^{j}\epsilon^{k}+\mathcal{R},
\end{gather}
where $A=v(0)$ and $B=v'(0)$. For a fixed $i,j,k$ the general theory now allows us to solve for $\Psi_{ijk}$ via a hierarchy of equations of the form
\begin{empheq}[left=\empheqlbrace]{align}
\label{psi_system}
\begin{split}
       &L(\Psi_{ijk})=F_{ijk} \\
       & Q(\Psi_{ijk})=0,
\end{split}
\end{empheq}
where $Q$ is the projection onto $\text{ker}L'$. The $F_{ijk}$ terms are obtained by iteratively feeding truncations of \eqref{u_expand} into $\mathcal{F}^{r} - L$ where $\mathcal{F}^{r}$ is $\mathcal{F}$ precomposed with a certain cutoff function. The key point here is that the $Q$ is unchanged by our modification of $\mathcal{W}$, and the $F_{ijk}$ terms are independent of terms of the order $O(|A|+|B|^{1/2}+\epsilon)^4)$ in $C^{3+\alpha}_{\mu}(\overline{\Omega})$. Our generalized $\mathcal{W}$ introduces, for example, the extra nonlinear term $c_{2}\nabla\cdot(|\nabla u|^{4}\nabla u)$ into \eqref{cub} near $(u,\lambda)=(0,0)$. We see that applying this to \eqref{u_expand} yields only terms of order $O((|A|+|B|^{1/2}+\epsilon)^4)$. Hence, from this point on the argument for existence of solutions to \eqref{cub} carries through without change. In particular, one can solve for $\Psi_{ijk}$ in the exact manner presented in \cite[Section~3.1]{chen2019center} and \cite[Appendix~B.1]{chen2019center}.

Although the rest of the argument now follows verbatim from \cite{chen2019center}, we continue the sketch because it will help to explain some later reasoning. Having calculated $\Psi_{ijk}$, we find that $f$ takes the form
\begin{gather}
\label{f}
    f(A,B,\epsilon) = \epsilon^{2}A+\dfrac{3(b_{1}+2c_{1})}{4}A^{3}+r(A,B,\epsilon)
\end{gather}
where $r \in C^3$ is an error term of the order $O(|A|(|A|+|B|^{1/2}+\epsilon)^{3}+|B|(|A|+|B|^{1/2}+\epsilon)^{2})$. Using the re-scaled variables 
\begin{gather*}
    x = : X/\epsilon , \qquad v(x) = : \epsilon V(x), \qquad v_{x}(x) =: \epsilon^{2}W(x)
\end{gather*}
we may now write \eqref{ODEv} as the planar system
\begin{gather}
    \label{system}
        \begin{cases}
                    V_{X}=W\\ 
                    W_{X} = V-a_{1}^{-2}V^{3}+R(V,V,\epsilon)
                \end{cases}.
\end{gather}
where the rescaled error $R(V,W,\epsilon) = O(|\epsilon|(|V|+|W|)$.  When $\epsilon=0$ the system has the explicit homoclinic orbit 
\begin{gather}
    V=a_{1}\text{sech}(X) \qquad W=-a_{1}\text{sech}(X)\text{tanh}(X) . 
\end{gather}
This solutions crosses the $V$-axis transversely. Since \eqref{system} has the reversal symmetries 
\begin{gather*}
(V(X),W(X)) \mapsto (V(-X),-W(-X))\qquad \text{and} \qquad (V(X), W(X)) \mapsto (V(X),-W(X)), 
\end{gather*}
which it inherits from \eqref{f_sym} and \eqref{f}, this intersection will persist for small $\epsilon$, so we obtain a family of homoclinic solutions. Undoing the scaling and appealing to \cite[~Theorem 1.1]{chen2019center} shows that the family \eqref{smallsol} are indeed solutions to \eqref{cub}. 
\end{proof}

We now establish some qualitative properties of small solutions to \eqref{cub}. 

\begin{theorem}
\thlabel{Positive}
Suppose that $(u, \epsilon^{2}) \in X_{0} \times \real$ is a solution to \eqref{cub} under the assumptions of Model I or Model II. There exists $\delta_{0}$ such that if $|u|_{3+\alpha}+\epsilon^{2} < \delta_{0}$, then $(u, \epsilon^{2}) \in \mathcal{C}^{I,II}_{\textup{loc}}$ after a possible translation or reflection in $x$. Moreover, if $(u,\epsilon^{2})\in \mathcal{C}^{I,II}_{\textup{loc}}$, then $u$ is even in $x$ and $y$ and monotone in that $u_{x} < 0$ for $x>0$. 
\end{theorem}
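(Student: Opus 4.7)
The statement has two parts, and I treat them in order: first the local uniqueness of small solutions modulo translation/reflection, and then the symmetry and monotonicity properties along $\mathcal{C}^{I,II}_{\textup{loc}}$.

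For the local uniqueness claim, I would appeal to \thref{existenceuniqueness} and the center-manifold decomposition (\ref{smallu}): any $(u, \epsilon^2) \in X_{0} \times \real$ with $|u|_{3+\alpha} + \epsilon^2$ sufficiently small makes the trace $v(x) := u(x, 0)$ a trajectory of the reduced ODE (\ref{ODEv}), which rescales to the planar system (\ref{system}). Because $u \in X_{0}$, both $u$ and $\nabla u$ decay uniformly, so $(V,W) \to (0,0)$ as $|X| \to \infty$; the trajectory is thus a non-trivial homoclinic orbit to the origin. At $\epsilon = 0$ this orbit is the explicit $V = a_{1}\,\text{sech}(X)$, which crosses the $V$-axis transversely, and the reversibility (\ref{f_sym}) inherited by (\ref{system}) makes the stable and unstable manifolds reflections of one another. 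The intersection on the $V$-axis is then isolated and persists for small $\epsilon$ by an implicit function argument, so any small non-trivial homoclinic coincides, after a translation in $X$ that places $v'(0) = 0$ and possibly the reflection $X \mapsto -X$, with the orbit used to construct $\mathcal{C}^{I,II}_{\textup{loc}}$. Recomposing via (\ref{smallu}) then places $(u,\epsilon^2)$ on the local curve.

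Turning to the symmetries along $\mathcal{C}^{I,II}_{\textup{loc}}$, once the normalization $v'(0) = 0$ is in place the reversal $(V,W)(X) \mapsto (V,-W)(-X)$ forces $v(-x) = v(x)$; evenness of $u$ in $x$ then follows by combining (\ref{smallu}) with the $\Psi$-symmetry $\Psi(A,-B,\epsilon)(-x,y) = \Psi(A,B,\epsilon)(x,y)$ from (\ref{psi_sym}). For evenness in $y$, observe that $\varphi_{0}(y) = \cos(y)$ is even and that each coefficient $\Psi_{ijk}$ in the expansion (\ref{psiexpand}) inherits evenness in $y$ from the reflection symmetry of (\ref{cub}) under $y \mapsto -y$; consequently every term in (\ref{u_expand}) is $y$-even.

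Monotonicity is then a perturbation of the $\epsilon = 0$ orbit. Writing $v(x) = \epsilon V(\epsilon x)$ with $V$ close in $C^{1}_{\textup{loc}}$ to $a_{1}\,\text{sech}$, the estimate $V_{X}(X) < 0$ for $X > 0$ on any fixed compact set persists for small $\epsilon$ by smooth dependence, while decay of $V$ at infinity together with the reversibility rules out a second zero of $V_{X}$ for $X > 0$ once $\epsilon_{0}$ is small. Hence $v'(x) < 0$ for $x > 0$. Because the leading contribution to $u_{x}$ is $v'(x)\cos(y)$ and $\cos(y) > 0$ on the open strip $|y| < \pi/2$, the higher-order corrections obtained by differentiating $\Psi$ are strictly smaller in size and can be absorbed by shrinking $\epsilon_{0}$ further, giving $u_{x}(x,y) < 0$ throughout $x > 0$. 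The most delicate step, and what controls the size of $\delta_{0}$, is the persistence and uniqueness of the transverse crossing of the homoclinic with the $V$-axis at $X = 0$: the reversibility of (\ref{system}) is exactly what reduces this to a one-dimensional implicit function problem, without which the set of small solutions would not be accessible by this ODE reduction.
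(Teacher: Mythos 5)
Your approach diverges substantially from the paper's and contains a genuine gap. The paper establishes positivity of $u$ by constructing the explicit supersolution $\Phi^{\delta}(y)=\log(2+\sqrt{\delta}y)\cos(\sqrt{1-\lambda}y)$ and invoking the maximum principle with a positive supersolution (\thref{max_pricp}\ref{pos_sup_sol}), then obtains evenness and strict monotonicity in $x$ from the moving-planes theorem \cite[Theorem~3.2]{MR1113099}; evenness in $y$ then follows by applying the resulting uniqueness statement to $u(x,-y)$. You attempt to extract all of these conclusions directly from the center-manifold ODE reduction \eqref{ODEv}--\eqref{system}.

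The step that fails is the monotonicity claim $u_x<0$ on $\Omega^{+}$. You argue that since the leading contribution to $u_x$ is $v'(x)\cos(y)$ with $\cos(y)>0$ on the open strip, the higher-order corrections from differentiating $\Psi$ are ``strictly smaller in size'' and ``can be absorbed by shrinking $\epsilon_0$.'' This cannot be concluded from the available information. The remainder in \eqref{psiexpand} (and the full expansion \eqref{smallsol}) is controlled only in a H\"older or weighted sup norm, and a norm bound on the error is compatible with the error having either sign pointwise. The problem is acute exactly where the leading term vanishes: as $y\to\pm\tfrac{\pi}{2}$, $\cos(y)\to 0$, and as $x\to\infty$, $v'(x)\to 0$, so $v'(x)\cos(y)$ tends to zero; a uniform $O(\epsilon^2)$ bound on the correction then gives no sign information near $\partial\Omega$ or in the tail. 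Shrinking $\epsilon_0$ does not rescue the argument because both the leading term and the corrections scale with $\epsilon$ in the same regions. This is precisely the failure mode that the paper's supersolution and moving-planes machinery is designed to overcome, since the maximum principle delivers a strict sign in the interior without requiring any pointwise comparison of asymptotic terms. The same deficiency affects your implicit reliance on $u>0$ near $\partial\Omega$ in the uniqueness step.

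Secondarily, your argument for evenness in $y$ is not the paper's and is not established by the cited symmetries: \eqref{psi_sym} records symmetry of $\Psi$ under $(A,B)\mapsto(-A,-B)$ and under $(B,x)\mapsto(-B,-x)$, neither of which speaks directly to parity of $\Psi_{ijk}$ in $y$. The paper instead deduces $y$-evenness from the uniqueness (modulo $x$-translation and $x$-reflection) that it has already secured via the moving-plane argument, applied to the competitor $u(x,-y)$. Since your argument never establishes that uniqueness rigorously, this step also requires repair. The phase-plane portion of your argument (persistence and transversality of the homoclinic, and the normalization $v'(0)=0$) is in the right spirit and consistent with the paper's \thref{existenceuniqueness} and the analysis of \eqref{system}, but it does not by itself yield the qualitative PDE-level conclusions of the theorem.
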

\begin{proof}
First, we show there exists $\delta_{0}$ small enough to ensure $u>0$. The Malgrange preparation theorem allows us to write $b(\lambda,z)=zw(\lambda ,z)$ for a smooth $w$ defined in some neighborhood of $(0,0)$, see for example \cite[Theorem~7.1]{chow}. Then, \eqref{cub} becomes 
\begin{empheq}[left=\empheqlbrace]{align}
\label{cubmod}
\begin{split}
        \mathfrak{a}_{1}u_{xx}+\mathfrak{a}_{2}u_{yy}+4\mathcal{W}''(|\nabla u|^2)u_{x}u_{xy}u_{y}-uw(\lambda, u)&=0 \qquad \text{in} \; \Omega\\  
         u&=0 \qquad \text{on} \; \partial \Omega. 
\end{split}
\end{empheq}
where 
\begin{gather*}
\mathfrak{a}_{1}=\mathcal{W}'(|\nabla u|)+2\mathcal{W}''(|\nabla u|^2)(u_{x})^{2} \qquad \text{and} \qquad \mathfrak{a}_{2}=\mathcal{W}'(|\nabla u|)+2\mathcal{W}''(|\nabla u|^2)(u_{y})^{2}.
\end{gather*}
Thus $\mathfrak{a}_{1}$ and $\mathfrak{a}_{2}$ are uniformly positive for small enough $\delta_{0}$ (note that this is only a concern for Model II, since \eqref{Wcond} ensures a such a lower bound holds for Model I). We write \eqref{cubmod} this way in order to view it as a linear elliptic PDE and apply a comparison principle argument. 

Consider the function
\begin{gather}
    \Phi^{\delta}=\Phi^{\delta}(y): = \log{(2+\sqrt{\delta}y)}\cos(\sqrt{1-\lambda}y).
\end{gather}
An elementary calculation reveals that 
\begin{empheq}{align}
\label{supsol1}
\begin{split}
    a_{2}\Phi^{\delta}_{yy}-w(\lambda, u)\Phi^{\delta} = \Big(&\dfrac{-\epsilon\cos(\sqrt{1-\lambda}y)}{(2+\sqrt{\epsilon}y)^2}-\dfrac{2\sqrt{\epsilon(1-\lambda)}\sin(\sqrt{1-\lambda}y)}{2+\sqrt{\epsilon}y}\Big)(1+O(\epsilon^{2})) \\ 
    &+O(\epsilon^{2}\cos(\sqrt{1-\lambda}y)) \;\; \text{in} \; \; C^{0}(\overline{\Omega}),
\end{split}
\end{empheq}
where we have used the asymptotics of $b$ and $\mathcal{W}$ in \eqref{analytic_assumptions}, which hold for small enough $\epsilon_{0}$. The right hand side of \eqref{supsol1} is strictly negative whenever $0\leq y \leq \frac{\pi}{2}$ and  $\delta$ is small enough. Moreover, $\Phi^{\delta}>0$ in $\overline{\Omega}$. So, if we establish non-negative boundary values for $u$ on the region $(-\infty,\infty)\times(0,\frac{\pi}{2})$, then we may invoke the maximum principle for uniformly elliptic operators with a positive super-solution (see \hyperref[pos_sup_sol]{\thref{max_pricp}.\ref{pos_sup_sol}}) to conclude that $u>0$ on $\mathbb{R}\times (0,\frac{\pi}{2})$. 

We already know $u=0$ on $\real\times\{\frac{\pi}{2}\}$, and  a phase plane analysis will show $u>0$ on $\real \times \{0\}$. Indeed, $v:=u(x,0)$ solves the ODE \eqref{ODEv} by \cite[~Theorem 1.1]{chen2019center}. If we write this as a planar system, which has the same structure as \eqref{system}, then the symmetries $(V,W) \mapsto (-V,-W)$ and $(V,W) \mapsto (V(-X),-W(-X))$ imply that a homoclinic orbit that intersects the positive $V$ axis meets the $W$ axis only at $(0,0)$. Hence, after a possible reflection $u(x,0)>0$, so $u>0$ for $0<y <\frac{\pi}{2}$ by the remarks at the end of the previous paragraph. Redoing the above analysis with $\Phi^{\delta}(-y)$ shows $u>0$ in $\Omega$. 

Now that the positivity of $u$ is established, we find from a moving planes argument in \cite[Theorem~3.2]{MR1113099} that $u$ is even in $x$ about some line $x=x_{1}$ with $u_{x}<0$ for $x>x_{1}$. The translation $x \mapsto x-x_{1}$ sends $u$ to a positive solution of \eqref{cub} with the desired monotonicity and evenness properties in $x$. The phase plane analysis for \eqref{system} in  \thref{existenceuniqueness} shows that $u^{\epsilon}_{x}(0,y)=0$, where $u^{\epsilon} \in \mathcal{C}^{I,II}_{\text{loc}}$, whence it follows that $u^{\epsilon}$ is even about $x=0$. 

Observe that the previous paragraphs established the uniqueness of small solutions to \eqref{cub} up to translations and reflections in $x$. In particular, the elements of $\mathcal{C}^{I,II}_{\text{loc}}$ are the unique positive and even solutions to \eqref{cub} in a sufficiently small neighborhood of $(0,0)$ in $X_{0}\times\real$. Finally, the elements of $\mathcal{C}^{I,II}_{\text{loc}}$ must be even in $y$ since the reflection $y \mapsto -y$ will take an element of $\mathcal{C}^{I,II}_{\text{loc}}$ to another positive solution that is even and monotone in $x$.  
\end{proof}

\subsection{Linearized problem}
In this section, we show the linearized operator $\mathcal{F}_{u}(0,\lambda): X \to Y$ is invertible for $0<\lambda\leq1$. This fact plays an important role in the analysis to follow. In particular, it implies the Fredholmness of $\mathcal{F}: X \to Y$, which will extend to the global curve. A simple calculation yields 
\begin{gather}
\label{LO}
    \mathcal{F}_{u}(0,0) = \Delta+1
\end{gather}
for Model I or Model II. The notion of a limiting operator is needed for the next two lemmas. If 
\begin{gather*}
L = a_{ij}(x,y)\partial_{x_{i}}\partial_{x_{j}}+b_{i}(x,y)\partial_{x_{i}}+c(x,y),
\end{gather*}
and as $x \to \pm \infty$ we have 
\begin{gather*}
    a_{ij}(x,y) \to \tilde{a}_{ij}(y), \;\; b_{i}(x,y) \to \tilde{b}_{i}(y), \;\; c(x,y) \to \tilde{c}(y), 
\end{gather*}
where each of $\tilde{a_{ij}}, \tilde{b}_{i}$, and $\tilde{c}$ belongs to $C^{\alpha}_{\text{b}}[-\frac{\pi}{2},\frac{\pi}{2}]$, then the limiting operator $\tilde{L}$ is defined as 
\begin{gather*}
    \tilde{L}: = \tilde{a}_{ij}\partial_{x_{i}}\partial_{x_{j}}+\tilde{b}_{x_{i}}\partial_{x_{i}}+\tilde{c}.
\end{gather*}
\begin{lemma}[Invertibility of linearized operator at 0]
\thlabel{linearized at 0}
For all $0<\lambda \leq 1$, $\mathcal{F}_{u}(0,\lambda): X \to Y$ is invertible. 
\end{lemma}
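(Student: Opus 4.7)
The plan is to exploit the constant-coefficient structure of $\mathcal{F}_{u}(0,\lambda)$. Using the expansions in \eqref{analytic_assumptions}, a direct calculation gives
\[
\mathcal{F}_{u}(0,\lambda)u = \Delta u + (1-\lambda)u.
\]
Since the cross-section is the interval $(-\pi/2, \pi/2)$ with Dirichlet data and we work with functions even in $y$, the natural basis of $-\partial_{y}^{2}$ is $\{e_{k}(y) := \cos((2k+1)y)\}_{k \geq 0}$ with eigenvalues $(2k+1)^{2}$. Writing $u = \sum_{k \geq 0} u_{k}(x) e_{k}(y)$ and $f = \sum_{k \geq 0} f_{k}(x) e_{k}(y)$ decouples the equation $\mathcal{F}_{u}(0,\lambda) u = f$ into the family of ODEs
\[
u_{k}''(x) - c_{k}(\lambda)\, u_{k}(x) = f_{k}(x), \qquad c_{k}(\lambda) := (2k+1)^{2} + \lambda - 1.
\]
The key point is that for $\lambda \in (0,1]$ we have $c_{k}(\lambda) > 0$ for every $k \geq 0$.

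For \emph{injectivity}, suppose $u \in X$ lies in the kernel. For each $k$, the coefficient $u_{k}(x) := \frac{2}{\pi} \int_{-\pi/2}^{\pi/2} u(x,y) e_{k}(y)\, dy$ is bounded, twice continuously differentiable in $x$, and satisfies $u_{k}(x) \to 0$ as $|x| \to \infty$ because $u \in C^{2}_{0}(\overline{\Omega})$. The only solution of $u_{k}'' = c_{k}(\lambda) u_{k}$ that decays at infinity is $u_{k} \equiv 0$, so by Parseval $u \equiv 0$.

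For \emph{surjectivity}, given $f \in Y$, I would solve each mode via the explicit Green's function
\[
u_{k}(x) = -\frac{1}{2\sqrt{c_{k}(\lambda)}} \int_{\real} e^{-\sqrt{c_{k}(\lambda)}\,|x-t|} f_{k}(t)\, dt,
\]
which yields the unique bounded solution of the $k$-th ODE, and set $u(x,y) := \sum_{k \geq 0} u_{k}(x) e_{k}(y)$. Integration by parts in $y$ using the $C^{1+\alpha}$ regularity of $f$ produces rapid decay of $f_{k}$ in $k$; the mode-by-mode bound $|u_{k}|_{\infty} \leq |f_{k}|_{\infty} / c_{k}(\lambda)$ then gives absolute convergence of the series, and interior Schauder estimates on a cover of $\overline{\Omega}$ by unit cylinders upgrade the sum to $C^{3+\alpha}_{\textup{b}}(\overline{\Omega})$. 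The exponential decay of the kernel combined with $f \in C^{0}_{0}(\overline{\Omega})$ finally transfers to yield vanishing of $u$ and its derivatives up to order two at infinity, placing $u$ in $X$.

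The main obstacle I anticipate is precisely this last verification — ensuring that the summed series genuinely lands in $X$ with the prescribed decay, rather than merely in $C_{\textup{b}}^{3+\alpha}(\overline{\Omega})$. A cleaner but less explicit alternative avoids the series book-keeping altogether: since $\mathcal{F}_{u}(0,\lambda)$ has constant coefficients, its limiting operator at $x = \pm \infty$ coincides with itself, and the same separation-of-variables argument shows this limiting operator has no nontrivial bounded solutions. An abstract Fredholm alternative for elliptic operators on cylinders (in the spirit of the ``limiting operator'' framework just introduced) then gives that $\mathcal{F}_{u}(0,\lambda)$ is Fredholm of index zero, whence the triviality of the kernel forces invertibility.
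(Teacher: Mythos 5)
Your primary argument takes a genuinely different, more elementary route than the paper's. The paper divides by the positive function $\cos(\sqrt{1-\lambda}\,y)$ to obtain an operator $L = \Delta + 2(\varphi_y/\varphi)\partial_y$ that satisfies the strong maximum principle, then builds invertibility in several steps: Lax--Milgram for $L - \gamma$ on $H_0^1$, a conjugation by $\operatorname{sech}(\epsilon x)$ to pass from $L^2$ data to bounded H\"older data, and a homotopy $L_{t\gamma}$ to pin the Fredholm index at zero. Your Fourier decomposition in $y$ plus the one-dimensional Green's function is more explicit and avoids both the Lax--Milgram step and the sech-conjugation trick, which is attractive. The injectivity argument is complete as written. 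The constructions are compatible (the paper's $\varphi$ is essentially your lowest mode $e_0$ when $\lambda=1$), but yours assembles the inverse rather than deforming to one.

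The real work is the surjectivity bookkeeping you flag, and it is worth being precise about why it is delicate. Since $f \in Y$ need not vanish on $y = \pm\pi/2$, integration by parts picks up an $O(1/k)$ boundary term, so $f_k = O(1/k)$ and hence $u_k = O(1/k^3)$. This gives absolute convergence of $\sum u_k e_k$ in $C^0_{\textup{b}}$ and of $\sum u_k' e_k$ in $C^0_{\textup{b}}$, but $\sum (2k+1)^2 u_k e_k$ and $\sum u_k'' e_k$ are only $O(1/k)$ termwise, so you cannot read off $C^2$ (let alone $C^{3+\alpha}$) convergence directly from the series. The fix is exactly the kind of bootstrap you gesture at: verify that the $C^0_{\textup{b}}$ sum is a distributional solution by testing against $\phi \in C_c^\infty(\Omega)$ and expanding $\phi$ in the $e_k$ basis (this works mode by mode using orthogonality and the ODE), then invoke $L^2$/Schauder regularity on unit sub-cylinders, which is uniform because the strip's boundary is flat, to land in $C_{\textup{b}}^{3+\alpha}$. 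Decay at $|x|\to\infty$ then follows from applying the same local estimates to $\Omega \cap \{|x|>M\}$ and using $f \in C_0^0(\overline\Omega)$ together with the termwise decay of the convolutions $u_k$. So the approach does close, but this chain of steps is roughly comparable in length to the paper's $H^1_0$-plus-conjugation route; neither is free.

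One genuine gap in your ``cleaner alternative'' at the end: knowing the constant-coefficient operator coincides with its limiting operator and that the limiting problem has no nontrivial bounded solutions only gives semi-Fredholmness (this is the content of the cited Lemma A.8 of \cite{wheeler2015solitary}), not index zero. An injective semi-Fredholm operator can fail to be surjective. The paper pins down the index by deforming $L$ to $L_\gamma = L - \gamma$ along a path of semi-Fredholm operators and separately proving $L_\gamma$ is invertible (Lax--Milgram plus conjugation). Your Fourier construction actually sidesteps the need for index theory entirely by producing the inverse explicitly, so the ``abstract'' shortcut is the weaker of your two suggestions, not the stronger.
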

\begin{proof}
Fix $0<\lambda\leq1$ and let $\varphi(y)=\cos(\sqrt{1- \lambda}y)$. Since $\varphi>0$ on $[-\frac{\pi}{2},\frac{\pi}{2}]$, we may write $u=:v\varphi$, so that $\mathcal{F}_{u}(0,\lambda)u=0$ implies 
\begin{empheq}[left=\empheqlbrace]{align} \label{divisiontrick}
\begin{split}
      \Delta v + \dfrac{2\varphi_{y}}{\varphi}v_{y} &=0 \qquad \text{in} \; \Omega\\  
        v&=0 \qquad \text{on} \; \partial \Omega. 
\end{split}
\end{empheq}
Let $L$ be the linear operator associated with~\eqref{divisiontrick} which acts on $v$. Note that $L:X_{\text{b}} \to Y_{\text{b}}$ has trivial kernel by the strong maximum principle (\hyperref[strong_max]{\thref{max_pricp}.\ref{strong_max}}). For $\gamma \in \mathbb{R}$, let $L_{\gamma}=L-\gamma$ and denote by $\mathcal{B}$ the corresponding bilinear operator:
\begin{gather*}
    \mathcal{B}[w,w] = \int_{\Omega} \left(|\nabla w|^{2} - \dfrac{\varphi_{y}}{\varphi}ww_{y}+\gamma w^{2}\right)\,dx\,dy= 
    \int_{\Omega}\left( |\nabla w|^{2} +\left(\dfrac{\partial}{\partial y}\dfrac{2\varphi_{y}}{\varphi}\right)w^{2}+\gamma w^{2}\right)\,dx\,dy,
\end{gather*}
for $w \in H_{0}^{1}$. When $\gamma$ is large enough, $\mathcal{B}$ is coercive and hence Lax--Milgram implies $L_{\gamma}:H_{0}^{1} \to L^{2}$ is invertible. 

We will next show that $L_{\gamma}: X_{\text{b}} \to Y_{\text{b}}$ is invertible. The argument is similar to the one found in \cite[Appendix A.2]{wheeler2013large}. Let $\rho_{\epsilon}(x):=\text{sech}(\epsilon x)$. Conjugating by $\rho_{\epsilon}$ the problem $L_{\gamma}=f$ may be transformed into the equivalent one 
\begin{gather*}
    L_{\gamma}^{\epsilon}u_{\epsilon}=L_{\gamma}u_{\epsilon}-\dfrac{2\partial_{x}\rho_{\epsilon}}{\rho_{\epsilon}}\partial_{x}u_{\epsilon}+\left(\dfrac{\partial_{x}^{2}\rho_{\epsilon}}{\rho_{\epsilon}}-\dfrac{2(\partial_{x}\rho_{\epsilon}^{2})}{\rho_{\epsilon}^{2}}\right)u_{\epsilon}=f_{\epsilon}
\end{gather*}
where $u_{\epsilon}:=u\rho_{\epsilon}$ and $f_{\epsilon}:=f\rho_{\epsilon}$. If $f \in Y_{\text{b}}$ then $f_{\epsilon} \in L^{2}$, and the equation $L_{\gamma}(u_{\epsilon})=f_{\epsilon}$ is solvable by the work above. Note that 
\begin{gather*}
    \|L_{\gamma}^{\epsilon}-L_{\gamma}\|_{X_{\text{b}} \to Y_{\text{b}}} = \left\|\dfrac{2\partial_{x}\rho_{\epsilon}}{\rho_{\epsilon}}\partial_{x}+\left(\dfrac{\partial_{x}^{2}\rho_{\epsilon}}{\rho_{\epsilon}}-\dfrac{2(\partial_{x}\rho_{\epsilon}^{2})}{\rho_{\epsilon}^{2}}\right)\right\|_{X_{\text{b}}\to Y_{\text{b}}} \longrightarrow 0, \;\;\; \text{as} \;\;\;\epsilon\to 0,
\end{gather*}
so for small enough $\epsilon_{0}$, the pertubation $L_{\gamma}^{\epsilon}$ of $L_{\gamma}$ remains invertible whenever $0<\epsilon<\epsilon_{0}$.

From \cite[Theorem 8.8]{gilbarg2015elliptic} and \cite[Theorem 9.19]{gilbarg2015elliptic}, we know $u_{\epsilon} \in C^{3+\alpha}(\overline{\Omega})\cap C_{\text{b}}^{\alpha}(\overline{\Omega})$. Moreover, By Schauder estimates and injectivity, we have the bound 
\begin{gather*}
|u_{\epsilon}|_{2+\alpha}\leq C|f_{\epsilon}|_{\alpha},   
\end{gather*}
wehre $C>0$ is independent of $\epsilon$. Therefore, we are able to extract a subsequence $\epsilon_{n}\to 0$ for which $u_{\epsilon_{n}}\to u$ in $C_{\text{loc}}^{2}(\overline{\Omega})$ with $u \in C_{\text{b}}^{2+\alpha}(\overline{\Omega})$. Letting $n \to \infty$ in the above equation we find the $L_{\gamma}u=f$.  

Now that the invertibility of $L_{\gamma}:X_{\text{b}}\to Y_{\text{b}}$ has been established, we will make use of the continuity of the Fredholm index to conclude that $L: X_{\text{b}} \to Y_{\text{b}}$ is invertible. Let $L_{\gamma t}:= L -t\gamma$. It is clear that $L_{\gamma t}$ is its own limiting operator for $t \in [0,1]$, since its coefficients are $x$-independent. The limiting problem has no non-trivial solutions because $L_{t\gamma}$ satisfies the strong maximum principle (\hyperref[strong_max]{\thref{max_pricp}.\ref{strong_max}}). Lemma A.8 of \cite{wheeler2015solitary} now shows $L_{t\gamma}$ must be semi-Fredholm with index $<\infty$. Thus, the Fredholm index must then be preserved along the family $\{L_{t\gamma}\}_{t \in [0,1]}$. We can now conclude that $L:X_{\text{b}} \to Y_{\text{b}}$ has Fredholm index $0$, just as the operator $L_{\gamma }$. Hence, $L: X_{\text{b}} \to Y_{\text{b}}$ is in fact invertible since it also has a trivial kernel. From \cite[Lemma A.12]{wheeler2015solitary}, $L:X_{0}\to Y_{0}$ must also have Fredholm index $0$, and again the kernel is trivial so that $L:X_{0} \to Y_{0}$ is invertible. Finally, it is not hard to see from the structure of $L$ that data $f \in Y \subset Y_{0}$ must have a corresponding solution $u \in X$. For example, if $f$ is even in $y$, and $v$ is the unique solution to $Lv=f$, then a quick check shows that $Lv(x,-y) =f$ as well. Hence, $L: X \to Y$ is invertible.  
\end{proof}

Now consider the linearized operator $\mathcal{F}_{u}(u,\lambda)$ with $(u,\lambda) \in \mathcal{C}^{I,II}_{\text{loc}}$. We know $\mathcal{F}_{u}(u,\lambda)\partial_{x} u=0$ by translation invariance and elliptic regularity. Thus, $\mathcal{F}_{u}(u,\lambda)$ has nontrivial kernel acting on $X_{\text{b}}$. However, if we instead restrict to $X$, which by definition imposes even symmetry, then we will have injectivity.      

\begin{lemma}[Trivial kernel]
\thlabel{Trivial Kernel}
 For all $(u, \lambda) \in \mathcal{C}_{\textup{loc}}^{I,II}$, $\mathcal{F}_{u}(u,\lambda): X \to Y$ is injective, whenever $(u,\lambda) \in \mathcal{C}^{I,II}_{\textup{loc}}$.
\end{lemma}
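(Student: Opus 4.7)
The plan is to establish triviality of the kernel via the center-manifold reduction of \cite[Theorem~1.1]{chen2019center}, combined with the phase-plane transversality already exploited in the proof of \thref{existenceuniqueness}. The key structural observation is that $\partial_x u \in X_{\text{b}}$ automatically lies in $\ker \mathcal{F}_u(u,\lambda)$ by translation invariance, but is odd in $x$ and hence excluded from $X$; the aim is to show this is essentially the only bounded kernel direction.

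Suppose $w \in X$ satisfies $\mathcal{F}_u(u,\lambda) w = 0$ with $(u,\lambda) = (u^\epsilon, \epsilon^2) \in \mathcal{C}^{I,II}_{\textup{loc}}$. First I would note that by elliptic regularity $w$ is a bounded classical solution of a linear equation whose coefficients are close to those of $\Delta + 1 - \epsilon^2$; this equation is uniformly elliptic even in the Model II case, since $|\nabla u^\epsilon|^2$ remains small along $\mathcal{C}^{I,II}_{\textup{loc}}$. Since $(u^\epsilon, \epsilon^2)$ lies near $(0,0)$ in $X_{\text{b}} \times \real$, I would then exploit the center-manifold parametrization \eqref{smallu} and differentiate it in the null direction $w$, producing a bounded solution $\bar v$ of the variational equation
\[
\bar v'' = f_v(v^\epsilon, v^{\epsilon\prime}, \epsilon^2)\,\bar v + f_{v'}(v^\epsilon, v^{\epsilon\prime}, \epsilon^2)\,\bar v'
\]
of \eqref{ODEv} along the homoclinic $v^\epsilon$. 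By the reversal symmetries \eqref{psi_sym} and \eqref{f_sym}, the evenness of $w$ in $x$ forces $\bar v$ to be even as well.

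Finally, the phase-plane argument in the proof of \thref{existenceuniqueness} showed that for small $\epsilon$ the stable and unstable manifolds of the origin of \eqref{system} intersect transversally along the homoclinic orbit. This transversality is the standard nondegeneracy condition which forces $v^{\epsilon\prime}$ --- the odd bounded solution of the variational equation coming from translation invariance of \eqref{ODEv} --- to span the one-dimensional space of bounded solutions. Since $\bar v$ is even while $v^{\epsilon\prime}$ is odd, $\bar v \equiv 0$, whence $w \equiv 0$. The main obstacle will be the rigorous passage from $w$ to $\bar v$, i.e.\ linearizing the center-manifold reduction itself: the cleanest route is to differentiate the defining hierarchy \eqref{psi_system} for $\Psi$ in the null direction $w$ and verify, using the structure of the projection $Q$, that the resulting object satisfies the variational equation --- essentially the argument of \cite[Appendix~B.1]{chen2019center} carried out at the linear level.
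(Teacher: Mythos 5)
Your high-level strategy matches the paper's: pass through the linearized center-manifold reduction (the paper cites \cite[Theorem~1.6]{chen2019center} for this step directly, so it is less of an obstacle than you fear) to obtain a bounded solution $\bar v$ of the variational ODE along the homoclinic $v^\epsilon$, then argue that the bounded solution space of that ODE is one-dimensional and spanned by the odd function $v^{\epsilon\prime}$, so that the parity of $\bar v$ forces $\bar v\equiv 0$ and hence $w\equiv 0$. The paper does the parity bookkeeping in the opposite direction (showing $w$ would have to be odd, contradicting $w\in X$), but this is cosmetic.

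The gap is in your justification of the one-dimensionality, which is the real content of the lemma. You attribute it to ``transversality of the stable and unstable manifolds along the homoclinic,'' citing the phase-plane argument in \thref{existenceuniqueness}. That transversality does not hold: for a planar saddle, $W^s$ and $W^u$ are both one-dimensional and any homoclinic orbit lies in both, so $T_pW^s = T_pW^u$ at every point of the orbit, i.e.\ the intersection is \emph{never} transversal as submanifolds of the plane. What the proof of \thref{existenceuniqueness} actually establishes is the transversal crossing of the $V$-axis (the fixed-point set of the reversor), which is the Devaney-type hypothesis used for persistence of the homoclinic under reversible perturbation, not a nondegeneracy statement about the variational problem. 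The paper closes this step with a Wronskian/Abel's-identity argument: two linearly independent bounded solutions would force $W(x)=W(0)\exp\bigl(\int_0^x r_B\bigr)$ to decay at both ends (since $v^{\epsilon\prime},v^{\epsilon\prime\prime}$ decay while $w,w_x$ are bounded), which is incompatible with the behavior of $\int_0^x r_B$ along the homoclinic. An alternative correct argument would simply note that for any $p$ on a planar homoclinic the bounded solutions of the variational equation are in bijection with $T_pW^s\cap T_pW^u$, which is one-dimensional because both tangent spaces coincide with the tangent line to the orbit. Either of these would fill the hole, but as written your appeal to transversality is not valid.
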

\begin{proof}
From \cite[Theorem~1.6]{chen2019center} and \cite[~Appendix B.1.]{chen2019center}, if $\dot{u}\in C_{\text{b}}^{3+\alpha}(\overline{\Omega})$ is a solution of $\mathcal{F}_{u}(u,\lambda)\dot{u}=0$, then $\dot{v}:=\dot{u}(\cdot,0)$ solves the linearized reduced ODE 
\begin{gather}
\label{reduction}
\dot{v}''=r_{B}\dot{v}'+(\lambda+\frac{9(b_{1}+2c_{2})}{4}v^{2}+r_{A})\dot{v} 
\end{gather}
where $v:= u(\cdot,0)$. As noted above, $\partial_{x}u$ is in the kernel of $\mathcal{F}_u(u,\lambda)$, so $v_{x}$ is an odd and bounded solution to \eqref{reduction}. Suppose that we had another bounded solution $w \in C^{2}_{\text{b}}(\mathbb{R})$ to $\eqref{reduction}$ that is linearly independent of $v$. From Abel's identity
\begin{gather*}
    W(x)=W(0)\exp{\left(\int_{0}^{x} \text{tr}(P(s))\, ds\right)}
\end{gather*}
where $W(x)$ is the Wronskian of $v$ and $w$ evaluated at $x$, and $P$ is the matrix defined by
\begin{gather*}
P\;:=
\begin{pmatrix}
0 & 1 \\ 
\lambda+\frac{9(b_{1}+2c_{1})}{4}c_{1}v^{2}+r_{A}(v,v',\epsilon) & r_{B}
\end{pmatrix}.
\end{gather*}
Since $u_{x}, u_{xx}, w,$ and $w_{x}$ are all bounded, and $u_{x},u_{xx}$ each decay at infinity, we see that
\begin{gather*}
|\text{det}W(x)|\leq (w^{2}(x)+w_{x}^{2}(x))\cdot (u_{x}^{2}(x,0)+u_{xx}(x,0)) \to 0 \;\;\; \text{as} \;\;\;|x| \to \infty. 
\end{gather*}
But then must have 
\begin{gather}\label{abel_contr}
    \int_{0}^{x}r_{B}(u_{x}(t,0),u_{xx}(t,0))\,dt \to -\infty \;\;\; \text{and} \;\;\; \int_{-x}^{0}r_{B}(u_{x}(t,0),u_{xx}(t,0))\,dt \to \infty \;\;\; \text{as} \;\;\; x \to \infty.
\end{gather}
Recalling the symmetry properties of $f$ in \eqref{f_sym} and the explicit form given in \eqref{f}, it follows that 
\begin{gather*}
    r_{B}(A,B)=-r_{B}(A,-B)=r_{B}(-A,-B).
\end{gather*}
This would imply 
\begin{align}\label{abel_contr2}
\begin{split}
   \lim_{x \to \infty}\int_{-x}^{0}r_{B}(u_{x}(t,0),&u_{xx}(t,0))\,dt = \lim_{x\to \infty}-\int_{0}^{x}r_{B}(u_{x}(-t,0),u_{xx}(-t,0))\,dt \\ 
        &=\lim_{x\to \infty}\int_{0}^{x}r_{B}(u_{x}(t,0),u_{xx}(t,0))\,dt, 
\end{split}
\end{align}
where we used the properties of $r_{B}$, oddness of $u_{x}$ and evenness of $u_{xx}$. Equations \eqref{abel_contr} and \eqref{abel_contr2} together force a contradiction.  
Hence, there cannot be two linearly independent bounded solutions to $\eqref{reduction}$. 

At this point we may conclude that $v_{x}$ generates the solution set of \eqref{reduction}. Thus, $\mathcal{F}_{u}:X \to Y$ has trivial kernel, since any non-zero element would necessarily be odd. To see this, suppose, by a slight abuse of notation, that some $w(x,y) \in C^{3}_{\text{b}}(\overline{\Omega})$ satisfies $\mathcal{F}_{u}(u,\lambda)w =0$. Recall that \cite[Theorem~1.1]{chen2019center} gives the expansion
\begin{gather*}
    w(x,y) = \varphi_{0}(x)w(x,0)+\Psi(w(x,0),w_{x}(x,0),\lambda)(0,y),
\end{gather*}
where $w(x,0)$ is odd in $x$ by the work above. The symmetries in \eqref{psi_sym} imply the additional symmetry 
\begin{gather*}
    \Psi(-A,B,\lambda)(0,y) = -\Psi(A,B,\lambda)(0,y),
\end{gather*}
from which we may conclude that $w$ is odd in $x$. 
\end{proof}

Finally, we show that $\mathcal{F}_{u}$ is invertible along the local curve. 

\begin{lemma}[Invertibility]
\thlabel{Invertibility}
For any $(u,\lambda) \in \mathcal{C}_{\text{loc}}^{I,II}$, the linearized operator $\mathcal{F}_{u}(u,\lambda): X \to Y$ is invertible.  
\end{lemma}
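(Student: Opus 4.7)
The strategy is to combine the two results already in hand: the injectivity from \thref{Trivial Kernel} and the invertibility of $\mathcal{F}_u(0,\lambda)$ from \thref{linearized at 0} (applicable because $\lambda=\epsilon^{2}$ is small on $\mathcal{C}^{I,II}_{\textup{loc}}$, hence in $(0,1]$). The goal is to show that $\mathcal{F}_{u}(u,\lambda):X\to Y$ has Fredholm index $0$; together with trivial kernel this yields invertibility.

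First I would identify the limiting operator. Writing out $\mathcal{F}_{u}(u,\lambda)$ in non-divergence form, its coefficients are analytic functions of $u,\nabla u,\nabla^{2}u$ that reduce to those of $\Delta+(1-\lambda)$ when $u=0$. Since $(u,\lambda)\in \mathcal{C}^{I,II}_{\textup{loc}}\subset X_{0}\times\real$, each of $u$ and its derivatives up to second order tends uniformly to zero as $|x|\to\infty$, so the limiting operator (in the sense defined right before \thref{linearized at 0}) is exactly $\mathcal{F}_{u}(0,\lambda)=\Delta+(1-\lambda)$. By \thref{linearized at 0}, this limiting operator is invertible on $X_{\textup{b}}\to Y_{\textup{b}}$, on $X_{0}\to Y_{0}$, and on $X\to Y$.

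Next I would push the homotopy argument used in the proof of \thref{linearized at 0} one step further. Consider the family
\[
\mathcal{L}_{t}:=\mathcal{F}_{u}(tu,\lambda),\qquad t\in[0,1],
\]
viewed as a continuous path of bounded linear operators $X_{\textup{b}}\to Y_{\textup{b}}$. For every $t$, the function $tu$ still lies in $X_{0}$, so each $\mathcal{L}_{t}$ has the same limiting operator $\mathcal{F}_{u}(0,\lambda)$, which is invertible. Applying \cite[Lemma~A.8]{wheeler2015solitary} (exactly as in the proof of \thref{linearized at 0}), each $\mathcal{L}_{t}$ is semi-Fredholm with finite index, and the index is locally constant in $t$. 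Since $\mathcal{L}_{0}=\mathcal{F}_{u}(0,\lambda)$ has index $0$, so does $\mathcal{L}_{1}=\mathcal{F}_{u}(u,\lambda)$. The passage from $X_{\textup{b}}\to Y_{\textup{b}}$ down to $X_{0}\to Y_{0}$ and then to the even subspace $X\to Y$ proceeds verbatim as in the final paragraph of the proof of \thref{linearized at 0} (via \cite[Lemma~A.12]{wheeler2015solitary} and the even/odd splitting in $y$ preserved by $\mathcal{F}_{u}(u,\lambda)$, which inherits this symmetry from the evenness of $u$ established in \thref{Positive}).

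Finally I would invoke \thref{Trivial Kernel} to conclude that $\mathcal{F}_{u}(u,\lambda):X\to Y$ is injective, and combine this with index $0$ to conclude surjectivity, hence invertibility. The only step where a genuine check is required is the continuity of $t\mapsto \mathcal{L}_{t}$ in the operator norm on $X_{\textup{b}}\to Y_{\textup{b}}$ and the verification that the limiting-operator hypothesis of \cite[Lemma~A.8]{wheeler2015solitary} holds uniformly in $t$; both follow from the analyticity of $\mathcal{F}$ together with $u\in X_{0}$, so I expect the argument to be short once the setup is written out. The main conceptual obstacle is simply bookkeeping: ensuring that the three different function spaces ($X_{\textup{b}}, X_{0}, X$) are threaded through the Fredholm/limiting-operator machinery in a consistent way, but this has already been carried out in \thref{linearized at 0} and can be cited.
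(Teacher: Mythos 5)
Your proof reaches the right conclusion, but you take a longer route than the paper and one step in your plan does not transfer ``verbatim'' as you claim. The paper's proof is a one-liner after \thref{Trivial Kernel}: since $u \in X_{0}$, the limiting operator of $\mathcal{F}_{u}(u,\lambda)$ is $\mathcal{F}_{u}(0,\lambda)$, which is invertible by \thref{linearized at 0}; then \cite[Lemma~A.13]{wheeler2015solitary} is a targeted tool stating precisely that an elliptic operator whose limiting operator is invertible is Fredholm with the same index as that limiting operator. No homotopy is required. You instead rebuild this conclusion by hand: you run a homotopy $\mathcal{L}_{t}=\mathcal{F}_{u}(tu,\lambda)$, invoke \cite[Lemma~A.8]{wheeler2015solitary} at each $t$, use local constancy of the index, then \cite[Lemma~A.12]{wheeler2015solitary}, and finally a symmetry argument. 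This essentially re-derives the content of Lemma~A.13, so it works, but it is several steps where one citation suffices.

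The subtle gap is in your final sentence invoking the even/odd passage ``verbatim as in \thref{linearized at 0}.'' In that lemma, the argument relied on \emph{invertibility} of $L:X_{0}\to Y_{0}$: given even data $f$, the unique solution $v$ must itself be even, so $L:X\to Y$ is surjective. Here, $\mathcal{F}_{u}(u,\lambda):X_{0}\to Y_{0}$ is \emph{not} invertible --- its kernel contains $\partial_{x}u$, as noted just before \thref{Trivial Kernel}. Index $0$ on $X_{0}\to Y_{0}$ therefore does not by itself give index $0$ on $X\to Y$: one needs a parity-splitting argument. Because $u$ is even in $x$ and $y$, the operator commutes with the reflections $x\mapsto -x$ and $y\mapsto -y$, so $X_{0}$ and $Y_{0}$ decompose into four parity sectors preserved by $\mathcal{F}_{u}(u,\lambda)$; the indices on the sectors sum to $0$, and one must identify which sector carries the kernel element $\partial_{x}u$ (odd in $x$, even in $y$) and argue that the cokernel lives entirely in that same sector, leaving the even-even block with index $0$. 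This is fillable --- for instance by observing that the adjoint problem inherits the analogous structure and that \thref{Trivial Kernel} forces the full kernel to be odd in $x$ --- but it is not ``verbatim.'' Citing Lemma~A.13 directly on $X\to Y$, as the paper does, sidesteps the issue entirely.
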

\begin{proof}
We found that $\mathcal{F}_{u}(u,\lambda): X \to Y$ has trivial kernel whenever $(u,\lambda) \in \mathcal{C}^{I,II}_{\text{loc}}$ in \thref{Trivial Kernel}.  It therefore suffices to show that this operator is Fredholm index $0$. The limiting operator of $\mathcal{F}_{u}(u, \lambda)$ is simply $\mathcal{F}_{u}(0,\lambda)$ because $u$ decays as $x \to \pm \infty$. Recall that $\mathcal{F}_{u}(0,\lambda)$ was shown to be invertible in \thref{linearized at 0}. By \cite[Lemma A.13]{wheeler2015solitary} it follows that the Fredholm indices of $\mathcal{F}_{u}(u,\lambda)$ and $\mathcal{F}_{u}(0,\lambda)$ match. Hence, $\mathcal{F}_{u}(u,\lambda)$ is in fact Fredholm index $0$, and the result follows. 
\end{proof}

\end{section}

\begin{section}{Global bifurcation}
\label{globalsection}
\subsection{Background theory}
We begin this section by recalling some of the global bifurcation theory developed in \cite[Section 6]{chen2018existence}. The results stated here are tailored to the problem at hand. Let $\mathcal{I}=(0,1)$ and 
\begin{align}
\label{ODef}
\begin{split}
    &\mathcal{O} = \bigcup\limits_{\delta>0}\mathcal{O}_{\delta} \qquad \text{where} \\ \mathcal{O}_{\delta} = X \cap \big\{u \in C^{3}(\overline{\Omega})\,:\, &\liminf_{(x,y) \in \overline{\Omega}}\big(\mathcal{W}'(q)+2q\mathcal{W}''(q)\big)\big\vert_{q=|\nabla u(x,y)|^{2}} >\delta \big\}.
\end{split}
\end{align}
\begin{theorem}
\thlabel{global}
There is a curve of solutions $\mathcal{C}^{I,II}\subset \mathcal{F}^{-1}(0)$, where $\mathcal{F}$ corresponds to either Model I or Model II, parameterized as $\mathcal{C}^{I,II} \coloneqq \{ (u(s),\lambda(s)) : 0<s<\infty\} \subset \mathcal{O} \times \mathcal{I}$ with the following properties. 
\begin{enumerate}[label=\rm(\alph*)]
    \item  One of the following alternatives holds. 
    \begin{enumerate}[label=\rm(\roman*)]
        \item \label{blowup_alt}\textup{(Blowup)} As $s \to \infty$
        \begin{gather}
        \label{blowup}
            N(s)\coloneqq |u(s)|_{3+\alpha}+\dfrac{1}{\text{dist}(u(s),\partial \mathcal{O})}+\lambda(s)+\dfrac{1}{\text{dist}(\lambda(s),\partial \mathcal{I})}\to \infty
        \end{gather}
        \item \label{lossofC}\textup{(Loss of compactness)} There exists a sequence $s_{n}\to \infty$ such that $\sup_{n}N(s_{n}) < \infty$ but $\{u(s_{n})\}$ has no subsequences converging in $X$. 
    \end{enumerate}
    \label{alternatives}
    \item Near each point $(x(s_{0}),\lambda(s_{0}))\in \mathcal{C}$, we can reparametrize $\mathcal{C}$ so that $s\mapsto (x(s),\lambda(s))$ is real analytic. 
    \item $(x(s),\lambda(s)) \notin \mathcal{C}_{\textup{loc}}$ for $s$ sufficiently large. \label{notinloc}
\end{enumerate}
\end{theorem}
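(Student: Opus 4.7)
The plan is to invoke the abstract analytic global bifurcation theorem from \cite[Section~6]{chen2018existence} applied to $\mathcal{F}: \mathcal{O} \times \mathcal{I} \to Y$, using the local curve $\mathcal{C}^{I,II}_{\textup{loc}}$ from \thref{existenceuniqueness} as the seed. The hypotheses to verify are: (i) $\mathcal{F}$ is real analytic; (ii) $\mathcal{F}_{u}$ is Fredholm of index zero on all of $\mathcal{O}\times\mathcal{I}$; and (iii) $\mathcal{F}_{u}$ is invertible at some point of $\mathcal{C}^{I,II}_{\textup{loc}}$. Each of (i)--(iii) is either known or is essentially a consequence of results already proved in Section~\ref{localtheory}.

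First I would verify real analyticity. This is straightforward: $\mathcal{W}$ and $b$ are analytic in their arguments, and the set $\mathcal{O}$ is defined precisely so that the ellipticity coefficient $\mathcal{W}'(q)+2q\mathcal{W}''(q)$ is uniformly positive on each $\mathcal{O}_{\delta}$. Composing analytic nonlinearities with the bounded linear differentiation maps $X \to C_{\textup{b}}^{2+\alpha}$ then produces an analytic map into $Y$. The evenness in $x,y$ and the decay built into $X$ and $Y$ are preserved by $\mathcal{F}$ because \eqref{cub} commutes with reflections and the coefficients of $\mathcal{F}_{u}(u,\lambda)$ decay as $u$ does.

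Next I would establish the global Fredholm property. For any $(u,\lambda) \in \mathcal{O}\times\mathcal{I} \subset X\times(0,1)$, membership of $u$ in $X$ forces $u$ and its derivatives up to second order to decay uniformly as $|x|\to\infty$, so the coefficients of $\mathcal{F}_{u}(u,\lambda)$ converge as $x\to\pm\infty$ to those of the limiting operator $\mathcal{F}_{u}(0,\lambda)$. Since $\mathcal{F}_{u}(0,\lambda): X \to Y$ is invertible on $\mathcal{I}\subset(0,1]$ by \thref{linearized at 0}, the perturbation-of-Fredholm-index tool of \cite[Lemma~A.13]{wheeler2015solitary} (exactly as used inside \thref{Invertibility}) yields Fredholm index zero for $\mathcal{F}_{u}(u,\lambda)$ at every such point. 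Invertibility at the base point is covered by \thref{Invertibility}, which in fact gives invertibility everywhere along $\mathcal{C}^{I,II}_{\textup{loc}}$.

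With the hypotheses in hand, parts (a) and (b) of \thref{global} follow directly from the conclusion of the abstract theorem. For part \ref{notinloc}, near $(0,0)$ the analytic implicit function theorem combined with \thref{Positive} pins the curve to $\mathcal{C}^{I,II}_{\textup{loc}}$ uniquely, so if $\mathcal{C}^{I,II}$ re-entered $\mathcal{C}^{I,II}_{\textup{loc}}$ for arbitrarily large $s$, then by the analytic reparametrization of part (b) and the Identity Theorem the global curve would coincide with $\mathcal{C}^{I,II}_{\textup{loc}}$ as a set, forcing $\mathcal{C}^{I,II}$ to be bounded in $X\times\mathcal{I}$ and ruling out both alternatives \ref{blowup_alt} and \ref{lossofC}, a contradiction. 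The main obstacle I would watch for is the global Fredholmness requirement on all of $\mathcal{O}\times\mathcal{I}$, rather than merely along the local curve; the limiting-operator comparison handles this cleanly, provided we are careful that the decay in $X$ is strong enough to make the coefficients of $\mathcal{F}_{u}$ converge in the $C^{\alpha}$ topology needed to invoke \cite[Lemma~A.13]{wheeler2015solitary}.
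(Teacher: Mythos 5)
Your proof is correct and follows essentially the same route as the paper: both verify the hypotheses of the analytic global bifurcation theorem from \cite[Theorem~6.1]{chen2018existence} (real analyticity, Fredholmness, and invertibility along the local curve from \thref{Invertibility}) and then cite the abstract result, with the paper doing this in one sentence after \thref{linearized at 0}--\thref{Invertibility} and you spelling out the hypothesis checks more fully. Your extra care about establishing Fredholm index zero on all of $\mathcal{F}^{-1}(0)\cap(\mathcal{O}\times\mathcal{I})$ (via the same limiting-operator comparison used in \thref{Invertibility}, with the $C^{\alpha}$ convergence of the coefficients following from decay in $C^{2}_{0}$ interpolated against boundedness in $C^{3+\alpha}_{\mathrm{b}}$) is a legitimate and welcome addition that the paper leaves implicit; likewise your argument for part \ref{notinloc} is a reasonable reconstruction, although in \cite{chen2018existence} that conclusion is already built into the abstract theorem.
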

\begin{proof}
We have shown that the linearized operator is invertible along the local curve and the result follows directly from \cite[Theorem~6.1]{chen2018existence}. 
\end{proof}
Alternative (i) encapsulates several interesting possibilities. We note that a blow-up in \eqref{blowup} can be achieved by a loss of ellipticity, $\lambda$ returning to $0$, or the more obvious unboundedness of $\lambda$ or $|u(s)|_{3+\alpha}$. Throughout the rest of the paper we investigate alternatives (i) and (ii) for Models I and II. This will ultimately lead us to discover that broadening occurs invariably in Model I and that a loss of ellipticity is ensured for Model II. At times we focus on segments of the curve $\mathcal{C}^{I,II}$ of the form
\begin{gather}
    \mathcal{C}^{I,II}_{\delta} := \mathcal{C}^{I,II} \cap \mathcal{O}_{\delta}.
\end{gather}
Note that $\mathcal{C}^{II}=\mathcal{C}^{II}_{\xi_{1}}$ by \eqref{Wcond}. 

At this point, it is convenient to recall another result from \cite{chen2018existence} which helps characterize alternative (ii) of~\thref{global}. 

\begin{theorem}[Chen, Walsh, Wheeler \cite{chen2018existence}]
\thlabel{CorF}
If $\{(u_{n},\lambda_{n})\}$ is a sequence of solutions to \eqref{cub} that is uniformly bounded in $C_{\textup{b}}^{3+\alpha}(\overline{\Omega})\times \real$, with the additional monotonicity property 
\begin{gather}\label{u_even}
    u_{n}(x,y) \; \; \text{is even in $x$ and} \; \; u_{x} \leq 0 \;\; \text{for} \; \; x \geq 0 
\end{gather}
for each $n$ as well as the asymptotic condition 
\begin{gather}\label{U_to_zero}
    \lim_{|x| \to \infty}u_{n}(x,y) = U(y) \; \; \text{uniformly in $y$}
\end{gather}
for some fixed function $U \in C_{\textup{b}}^{3+\alpha}([-\frac{\pi}{2}, \frac{\pi}{2}])$, then either 
\begin{enumerate}[label=(\roman*), font=\upshape]
    \item we can extract a subsequence $\{u_{n}\}$ so that $u_{n} \to u$ in $C_{\textup{b}}^{3+\alpha}(\overline{\Omega})$; or \label{Compactness}
    \item we can extract a subsequence and find $x_{n} \to \infty$ so that the translated sequence $\{\tilde{u}_{n} \}$ defined by $\tilde{u}_{n} = u_{n}(\cdot + x_{n}, \cdot)$ converges in $C_{\textup{loc}}^{3}(\overline{\Omega})$ to some $\tilde{u} \in C_{\textup{b}}^{3+\alpha}(\overline{\Omega})$ that solves \eqref{cub} and has $\tilde{u} \not\equiv U$ with $\tilde{u}_{x} \leq 0$. \label{Front}  
\end{enumerate}
\end{theorem}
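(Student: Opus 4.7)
The plan is to extract limits via Arzelà--Ascoli and then apply a dichotomy based on whether the tails $u_n-U$ decay uniformly in $n$. First, pass to a subsequence so that $\lambda_n\to\lambda\in\real$. Since the $u_n$ are uniformly bounded in $C_{\textup{b}}^{3+\alpha}(\overline{\Omega})$ and $C^{3+\alpha}$ embeds compactly into $C^{3}_{\textup{loc}}(\overline{\Omega})$, a further subsequence converges in $C^{3}_{\textup{loc}}$ to some $u\in C_{\textup{b}}^{3+\alpha}(\overline{\Omega})$. The limit solves \eqref{cub} at the parameter $\lambda$ (by passing to the limit in the elliptic equation, using continuous dependence of the coefficients on $u,\nabla u$), is even in $x$, and satisfies $u_x\leq 0$ on $\{x\geq 0\}$, all inherited from \eqref{u_even}.

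For the dichotomy, set
\[
T_n(R):=\sup_{|x|\geq R,\; y\in[-\pi/2,\pi/2]}|u_n(x,y)-U(y)|,
\]
and note $T_n(R)\to 0$ as $R\to\infty$ for each fixed $n$ by \eqref{U_to_zero}. \emph{Case (A):} $\sup_n T_n(R)\to 0$ as $R\to\infty$. Combined with $u_n\to u$ uniformly on compacts, this forces $u_n\to u$ uniformly on $\overline{\Omega}$. Interpolating between the uniform $C^{0}$ convergence and the uniform $C_{\textup{b}}^{3+\alpha}$ bound promotes the convergence to $C_{\textup{b}}^{3+\beta}$ for any $\beta<\alpha$, and a slightly more refined equicontinuity argument for the Hölder quotient of the third derivatives recovers convergence in $C_{\textup{b}}^{3+\alpha}$ itself; this is alternative~\ref{Compactness}.

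\emph{Case (B):} Otherwise there exists $\epsilon_0>0$ with $\sup_n T_n(R)\geq \epsilon_0$ for every $R>0$ (since $T_n(R)$ is monotone in $R$). Extracting, I find $(x_n,y_n)$ with $|x_n|\to\infty$, $y_n\in[-\pi/2,\pi/2]$, and $|u_n(x_n,y_n)-U(y_n)|\geq \epsilon_0$. By evenness in $x$ one may take $x_n>0$, and by monotonicity $u_n(x_n,y_n)\geq \lim_{x\to\infty}u_n(x,y_n)=U(y_n)$, so in fact $u_n(x_n,y_n)-U(y_n)\geq \epsilon_0$. Setting $\tilde u_n(x,y):=u_n(x+x_n,y)$ preserves the uniform $C_{\textup{b}}^{3+\alpha}$ bound, so a further subsequence converges in $C^{3}_{\textup{loc}}$ to some $\tilde u\in C_{\textup{b}}^{3+\alpha}$ solving \eqref{cub}. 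For each fixed $x$, we have $x+x_n\geq 0$ eventually, whence $(u_n)_x(x+x_n,y)\leq 0$ passes to $\tilde u_x\leq 0$ on all of $\overline{\Omega}$. Finally, passing again to a subsequence with $y_n\to y^*\in[-\pi/2,\pi/2]$, local uniform convergence yields $\tilde u(0,y^*)-U(y^*)\geq\epsilon_0>0$, so $\tilde u\not\equiv U$. This is alternative~\ref{Front}.

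The main technical obstacle is the upgrade in case (A) from uniform convergence to the full $C_{\textup{b}}^{3+\alpha}$ norm: because $C^{3+\alpha}$ is not compactly embedded in itself, interpolation alone yields only $C_{\textup{b}}^{3+\beta}$ for $\beta<\alpha$, and recovering the critical exponent requires the additional equicontinuity argument mentioned above. Everything else amounts to careful bookkeeping of successive subsequences and a routine passage to the limit in the elliptic equation.
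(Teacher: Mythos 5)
The paper does not prove this theorem; it states it as a result imported from Chen, Walsh, and Wheeler \cite{chen2018existence}, so there is no internal proof to compare against, and what follows evaluates your argument on its own merits. Your overall strategy — a concentration–compactness dichotomy driven by whether the tails $T_n(R):=\sup_{|x|\ge R}|u_n-U|$ decay uniformly in $n$ — is the natural one and does match the structure of such arguments. Case (B) is essentially sound modulo re-indexing: since $T_n(R)\to 0$ as $R\to\infty$ for each fixed $n$, any witnesses to $\sup_n T_n(R)\ge\epsilon_0$ must have $n\to\infty$, and after passing to a subsequence you get $(x_k,y_k)$ with $|x_k|\to\infty$ and $|u_{n_k}(x_k,y_k)-U(y_k)|\ge c_0>0$ (one cannot generally keep the full $\epsilon_0$, but any positive $c_0$ suffices); the remainder of (B) — translating, inheriting $\tilde u_x\le 0$, and showing $\tilde u(0,y^*)\neq U(y^*)$ — is correct.

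The genuine gap is in Case (A), precisely where you flag it, and the proposed fix does not work. Interpolation between uniform $C^0$ convergence and a uniform $C^{3+\alpha}_{\mathrm b}$ bound yields $C^{3+\beta}_{\mathrm b}$ convergence only for $\beta<\alpha$, and ``a slightly more refined equicontinuity argument for the H\"older quotient'' cannot recover the endpoint: for $f_n(x)=n^{-\alpha}\chi(nx)$ with $\chi$ a fixed bump one has $f_n\to 0$ uniformly, $\sup_n|f_n|_{C^\alpha}<\infty$, yet $[f_n]_\alpha=[\chi]_\alpha$ for all $n$ — the H\"older quotients are exactly \emph{not} equicontinuous, and no bound on the seminorm alone lets you pass it to the limit. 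The missing ingredient is the elliptic equation. Write \eqref{cub} in non-divergence form, differentiate once, and apply Schauder estimates to $\partial_k(u_n-u)$ on unit-width slabs $\Omega\cap\{M\le x\le M+1\}$: the linear equation that $\partial_k(u_n-u)$ satisfies has coefficients and right-hand side going to zero in $C^\alpha_{\mathrm b}$ (this follows from the $C^{3+\beta}_{\mathrm b}$ convergence you already have, combined with the uniform $C^{3+\alpha}_{\mathrm b}$ bounds, via interpolation at sub-critical order on products of converging and bounded factors), and the bounded geometry of the strip makes the Schauder constant independent of $M$. That upgrades the convergence to $C^{3+\alpha}_{\mathrm b}$. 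As written, your Case (A) concludes only $C^{3+\beta}_{\mathrm b}$ convergence for $\beta<\alpha$, which is strictly weaker than alternative~\ref{Compactness}.
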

Note that this theorem requires some symmetry and monotonicity properties in $u_{n}$. The following subsection demonstrates these properties, and more, for elements of $\mathcal{C}^{I,II}$.

\subsection{Monotonicity and nodal properties} 
\label{mono}
We show that elements of $\mathcal{C}^{I,II}_{\text{loc}}$ exhibit certain qualitative features by using the asymptotics \eqref{smallsol} and maximum principle arguments. In fact, we have already established that \eqref{u_even} and \eqref{U_to_zero} (with $U(y) =0$) hold along $\mathcal{C}^{I,II}_{\text{loc}}$ in \thref{Positive}. Our goal is to prove that these persist along $\mathcal{C}^{I,II}$. The following sets will be useful for our analysis: 
\begin{align}
\begin{split}
    \Omega^{+}&: = \{ (x,y) \in \Omega \; : \; x>0 \}  \\ 
    \Omega_{+}&: = \{(x,y)\; : \; |x|<R, 0<y \leq \frac{\pi}{2} \} \\
    L &:= \{ (0,y) \; : \; -\pi/2 < y < \pi/2 \} \\ 
    T &:= \{ (x, \pi/2) \; : \; 0< x < \infty \} \\ 
    B &:= \{ (x, -\pi/2) \; : \; 0< x< \infty \} \\
    M &:= \{ (x, 0) \; : \; 0\leq x< \infty \}.
\end{split}
\end{align}
The nodal properties we are concerned with are as follows: 
\begin{align}
\label{nodalprop}
\begin{split}
    u_{x}< 0& \; \; \; \text{on} \; \;\Omega^{+}  \\ 
    u_{y}< 0&  \; \; \; \text{on} \; \;\Omega_{+}\\ 
    u_{xx}< 0& \; \; \; \text{on} \; \; L \\ 
    u_{xy}> 0&  \; \; \; \text{on} \; \; T \\ 
    u_{xxy}> 0 \; \; \; \text{at} \; \; (0,\frac{\pi}{2}) \; \; &\text{and} \;\; u_{xxy}> 0 \; \; \; \text{at} \; \; (0,-\frac{\pi}{2}) \\ 
    u_{yy}< 0& \; \; \; \text{on} \; \; M
\end{split}
\end{align}
The reason for such a long list is owed to the style of argument. Roughly speaking, we will split the right half (or upper half) of $\overline{\Omega}$ into a finite rectangle and infinite tail region (or into a finite rectangle and \textit{two} tail regions). The conditions in \eqref{nodalprop} will help gain control on the sign of either $u_{x}$ or $u_{y}$ near the boundary. 
The following result gives a condition which ensures a sign on the $x$ derivative of small solutions to \eqref{cub}. 

\begin{lemma}[Asymptotic monotonicity]\thlabel{asymptotic}
There exists $\epsilon_{0}>0$ such that, if $u \in C^{3}_{\textup{b}}(\overline{\Omega})$ and  $(u,\lambda) \in \mathcal{O}_{\delta} \cap \mathcal{F}^{-1}(0),\;\;\text{for some} \; \;\delta>0$, $\lambda>0$,  $u_{x} \leq 0$ on $L_{x_{0}}: = \{(x,y) \in \Omega\; : \; x = x_{0}\}$, and 
\begin{gather*}
    |u|_{2}< \epsilon_{0},
\end{gather*}
then $u_{x}\leq0$ in $\Omega\cap \{ (x,y)\; : \; x\geq x_{0}\}$. 
\end{lemma}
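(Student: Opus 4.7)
The plan is to differentiate the PDE in $x$ and deduce the sign of $v := u_{x}$ on the half-strip $D := \Omega \cap \{x > x_0\}$ from a maximum-principle argument. Differentiating \eqref{cub} in $x$ and using $(u,\lambda) \in \mathcal{O}_{\delta}$, $v$ solves a uniformly elliptic linear equation $Lv = 0$ in $D$. A direct expansion shows that, under $|u|_{2} < \epsilon_0$, the operator $L$ is a small perturbation of $\Delta + (1-\lambda)$: its principal coefficients $\mathcal{W}'(|\nabla u|^{2})\delta_{ij} + 2\mathcal{W}''(|\nabla u|^{2})u_{i}u_{j}$ are close to $\delta_{ij}$, the drift coefficients are $O(\epsilon_{0})$ (coming from $\nabla\bigl(DA(\nabla u)\bigr)$), and the zeroth-order coefficient is $-b_{z}(u,\lambda) = (1-\lambda) + O(|u|^{2})$.

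Three pieces of boundary data for $v$ are available. Since $u \equiv 0$ on $\partial\Omega$, $v \equiv 0$ on $\{y = \pm\pi/2\}$. By hypothesis $v \leq 0$ on $L_{x_{0}}$. And since $u \in X \subset C^{2}_{0}(\overline{\Omega})$, $v(x,y) \to 0$ uniformly in $y$ as $x \to \infty$. The main obstacle is that for $\lambda \in (0,1)$ the zeroth-order coefficient of $L$ is positive, so the direct maximum principle does not apply.

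The remedy, in the spirit of the substitution used in the proof of \thref{linearized at 0}, is to divide out a positive supersolution. Choose $\mu \in \left(\sqrt{1-\lambda},\,1\right)$, which is possible precisely because $\lambda > 0$, and set $\varphi(y) := \cos(\mu y)$, which is strictly positive on $[-\pi/2,\pi/2]$. A short calculation gives
\[
L\varphi \;=\; \bigl[(1-\lambda) - \mu^{2}\bigr]\varphi + O(\epsilon_{0})\,\varphi,
\]
so that for $\epsilon_{0}$ sufficiently small $L\varphi < 0$ uniformly on $D$, i.e.\ $\varphi$ is a strict positive supersolution. Writing $v =: w\varphi$, the function $w$ then satisfies a uniformly elliptic equation $\widetilde{L}w = 0$ whose zeroth-order coefficient equals $L\varphi/\varphi < 0$. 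The boundary data transfer from $v$ to $w$ directly: $w \equiv 0$ on $\{y = \pm\pi/2\}$, $w \leq 0$ on $L_{x_{0}}$, and $w \to 0$ as $x \to \infty$. The usual maximum principle for uniformly elliptic operators with non-positive zeroth-order coefficient (see \hyperref[pos_sup_sol]{\thref{max_pricp}.\ref{pos_sup_sol}}) therefore yields $w \leq 0$ on $D$, hence $v \leq 0$ on $D$. The unboundedness of $D$ is handled routinely by exhausting with bounded rectangles $(x_{0},R) \times (-\pi/2,\pi/2)$ and letting $R \to \infty$, using the uniform decay of $v$ at infinity to control the boundary contribution at $x = R$.
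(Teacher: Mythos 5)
Your argument is essentially the paper's: differentiate \eqref{cub} in $x$ so that $v=u_{x}$ solves a uniformly elliptic equation, divide by the positive transversal function $\varphi(y)=\cos(\mu y)$ with $\mu^{2}\in(1-\lambda,1)$ (the paper writes $\cos(\sqrt{k}y)$ with $1-\lambda<k<1$, the same choice), observe that the resulting zeroth-order coefficient is $(1-\lambda-\mu^{2})+O(\epsilon_{0})<0$, and conclude by the maximum principle. Your additional remark about exhausting the half-strip by bounded rectangles using the decay of $v$ at $x=+\infty$ is a welcome precaution that the paper leaves implicit.

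The one point you omit is the regime $\lambda\geq 1$, which is allowed by the hypothesis ``$\lambda>0$'' and does arise when the lemma is invoked (in \thref{open} there is no a priori upper bound on $\tilde{\lambda}$). Your prescription $\mu\in(\sqrt{1-\lambda},1)$ is only meaningful for $\lambda<1$. The paper covers the remaining case with a one-line observation: when $\lambda>1$ and $|u|_{2}<\epsilon_{0}$, the raw zeroth-order coefficient $-b_{z}(u,\lambda)=(1-\lambda)+O(\epsilon_{0}^{2})$ is already nonpositive, so the strong maximum principle applies to $v$ directly without any substitution. (Alternatively, your substitution still works for $\lambda\geq1$ if you simply take any $\mu\in(0,1)$, since then $1-\lambda-\mu^{2}<0$ automatically.) This is a small gap, but it should be stated.
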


\begin{proof}
Fix $\lambda$ with $0<\lambda \leq 1$. Differentiating $(\ref{cub})$ with respect to $x$ gives
\begin{empheq}[left=\empheqlbrace]{align}
\label{quasilinearize}
\begin{split}
\nabla \cdot(\mathcal{W}'(|\nabla u|^{2})\nabla v+ 2\mathcal{W}''(|\nabla u|^{2})(\nabla u \otimes \nabla u)\nabla v)-b_{u}(u,\lambda)v&=0\qquad \text{in} \; \Omega\\  
        v&=0 \qquad \text{on} \; \partial \Omega
\end{split}
\end{empheq}
where $v=u_{x}$. We see that $\eqref{quasilinearize}$ is uniformly elliptic by \eqref{Wcond} in the case of Model I, and the fact that $u \in \mathcal{O}_{\delta}$ in the case of Model II. Let $v:=\varphi z$, where 
\begin{gather} \label{mod_u_x}
    \varphi(y) = \cos(\sqrt{k}y)
\end{gather}
and $1-\lambda < k < 1$. After plugging \eqref{mod_u_x} into \eqref{quasilinearize}, we find that $z$ satisfies a uniformly elliptic equation with zeroth order term
\begin{gather}
\label{zeroth}
    \frac{1}{\varphi}(\partial_{y}(\mathcal{W}'(|\nabla u|^{2})\varphi+2\mathcal{W}''(|\nabla u|^{2})(u_{y}^{2}+u_{x}u_{y})\varphi).
\end{gather}
If $\epsilon_{0}$ is chosen small enough, then from \eqref{w} and \eqref{b} it follows that \eqref{zeroth} admits the $C^{0}(\overline{\Omega})$ expansion  
\begin{gather}\label{neg_zeroth}
    \frac{1}{\varphi}((1-\lambda-k)\varphi+O(\epsilon_{0}^{2}))<0.
\end{gather}
Thus, \eqref{neg_zeroth} implies that $z$ satisfies the strong maximum principle (\hyperref[strong_max]{\thref{max_pricp}.\ref{strong_max}}). Note $z = 0$ on $\partial \Omega$, and $z \leq 0$ on $L_{x_{0}}$, so it follows from the maximum principle that $z \leq 0$ in $\overline{\Omega}\cap\{ (x,y)\; : \; x\geq x_{0}\}$. Since $\varphi(y)>0$, we must have $u_{x}<0$ in $\overline{\Omega}\cap\{ (x,y)\; : \; x\geq x_{0}\}$ as well.  

If $\lambda>1$, then $v=u_{x}$ still solves \eqref{quasilinearize}. For $\epsilon_{0}$ sufficiently small, $-b_{z}(u,\lambda)\leq 0$, by \eqref{b}. As before, the strong maximum principle and boundary conditions now yield the desired conclusion. 
\end{proof}

\begin{remark}\thlabel{asyp_ext}
The above lemma is stated for the half strip $(x_{0},\infty) \times (-\frac{\pi}{2},\frac{\pi}{2})$, for some $x_{0}>0$, but a similar result holds for sets of the form $(x_{0}, \infty)\times (0,\frac{\pi}{2})$ or $(-\infty,-x_{0})\times (0,\frac{\pi}{2})$.  
\end{remark}

Next, we consider the nodal properties of a monotone solution. 

\begin{lemma}[Nodal properties]
\thlabel{nodalproplemma}
Let $(u,\lambda) \in \mathcal{O}_{\delta} \cap \mathcal{F}^{-1}(0),\;\;\text{for some} \; \;\delta>0$. Suppose that $u_{x}<0$ in $\Omega^{+}$, $u_{y}<0$ in $\Omega_{+}$, and $u \in X$. Then $u$ satisfies \eqref{nodalprop}.  \end{lemma}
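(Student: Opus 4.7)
The plan is to derive each strict inequality in \eqref{nodalprop} by differentiating \eqref{cub}, applying the Hopf boundary-point lemma at interior points of $L$, $T$, and $M$, and applying Serrin's edge-point lemma at the two corners $(0,\pm\tfrac{\pi}{2})$. The hypothesis $(u,\lambda)\in\mathcal{O}_{\delta}$ guarantees that each linearized PDE is uniformly elliptic on all of $\overline{\Omega}$; the hypothesized evenness $u\in X$ together with the strict signs $u_{x}<0$ in $\Omega^{+}$ and $u_{y}<0$ in $\Omega_{+}$ supply the sign and boundary data needed for those lemmas.

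First I would treat $u_{xx}<0$ on $L$ and $u_{xy}>0$ on $T$ simultaneously. The function $v:=u_{x}$ satisfies the linear, uniformly elliptic equation \eqref{quasilinearize}; it vanishes on $L$ by evenness of $u$ in $x$ and on $T\cup B$ since $u\equiv 0$ on $\partial\Omega$, while the hypothesis $u_{x}<0$ in $\Omega^{+}$ says that $-v\geq 0$ is not identically zero on $\Omega^{+}$. Applying the Hopf lemma to $-v$ at an interior point of $L$, where the outward normal to $\Omega^{+}$ is $-\hat{e}_{x}$, gives $\partial_{x}(-v)>0$, i.e.\ $u_{xx}<0$; applying it at an interior point of $T$, where the outward normal is $\hat{e}_{y}$, gives $\partial_{y}(-v)>0$, i.e.\ $u_{xy}>0$. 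The inequality $u_{yy}<0$ on $M$ is obtained in the same way with $v:=u_{y}$ on the upper half strip $\{y>0\}$: differentiating \eqref{cub} in $y$ gives a linear, uniformly elliptic PDE for $v$, by hypothesis $v\leq 0$ on the upper strip, by evenness of $u$ in $y$ it vanishes on $M$, and Hopf with outward normal $-\hat{e}_{y}$ gives $u_{yy}<0$ on $M$.

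For the corner conditions on $u_{xxy}$ at $(0,\pm\tfrac{\pi}{2})$ the standard Hopf lemma is not sharp enough, because at such a corner both $v=u_{x}$ and all of its first-order partials vanish (the tangential derivatives along $L$ and $T$ because $v\equiv 0$ on each arc, and the normal derivatives by the same reasoning applied to $v$ on the adjacent arc). I would then invoke the right-angle version of Serrin's edge-point lemma: the non-negative function $w:=-u_{x}$ solves a uniformly elliptic linear equation on $\Omega^{+}$, vanishes on the two adjacent arcs $L$ and $T$, and is not identically zero, so the mixed second derivative of $w$ in the two inward-normal directions is strictly positive at $(0,\tfrac{\pi}{2})$. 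Translating back gives the required sign of $u_{xxy}(0,\tfrac{\pi}{2})$; the corresponding statement at $(0,-\tfrac{\pi}{2})$ follows by the evenness of $u$ in $y$.

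The main technical subtlety is that the zeroth-order coefficient $-b_{u}(u,\lambda)$ of the linearized operator has no a priori sign, so the most elementary forms of the strong maximum principle and Hopf lemma do not directly apply. This is handled by the refinements already invoked in \thref{Positive} and \thref{linearized at 0} via \thref{max_pricp}, which are valid for non-negative functions with bounded but sign-indefinite zeroth-order coefficient, or equivalently by the division trick of writing $v=\varphi z$ for a suitable positive weight $\varphi(y)$ (as in the proof of \thref{asymptotic}) that absorbs the bad-sign term. Beyond this, the only item to verify is the regularity of the linearized coefficients near the corners, which is immediate from $u\in X\subset C^{3+\alpha}_{\text{b}}(\overline{\Omega})$.
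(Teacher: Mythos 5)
Your proof is correct and follows essentially the same path as the paper: differentiate the PDE, apply the Hopf lemma (in its signed-solution form) to $u_x$ on $L$ and $T$ and to $u_y$ on $M$, and then handle the corner derivatives $u_{xxy}(0,\pm\tfrac{\pi}{2})$ with Serrin's edge-point lemma using $w=-u_x\geq 0$ on $\Omega^+$. The only cosmetic difference is that the paper phrases the corner step through $\partial_s^2 u_x<0$ for an arbitrary outward direction $s$ and then notes that $u_{xxx}=u_{xyy}=0$ at the corner (by evenness of $u$ in $x$ and by $u_x\equiv 0$ on $L$) so that only the mixed term $2s_1s_2u_{xxy}$ survives, whereas you quote a mixed-derivative form of the lemma directly; both give the same conclusion.
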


\begin{proof}
Note $u_{x}=0$ on $\partial \Omega^{+}$ from the boundary conditions and evenness in the $x$ variable. In particular, $u_{x}=u_{xx}=0$ on $T$. The Hopf lemma (\hyperref[hopf]{\thref{max_pricp}.\ref{hopf}}) shows that $u_{xx}<0$ on $L$, $u_{xy}<0$ on $B$, and that $u_{xy}>0$ on $T$. Moreover, $u_{xy}=u_{xyy}=0$ on $L$, since $u_{x}=0$ on $L$. If $(s_{1},s_{2})$ is a unit outward pointing vector at $(0,\frac{\pi}{2})$ with $s_{1}<0$ and $s_{2}>0$, then Serrin's lemma (\hyperref[serrin]{\thref{max_pricp}.\ref{serrin}}) requires $\partial^{2}_{s}u_{x}<0$ at $(0,\frac{\pi}{2})$ since $u_{xx}=u_{xy}=0$ at $(0,\frac{\pi}{2})$. A simple calculation shows
$\partial_{s}^{2}u_{x}=s_{1}^{2}u_{xxx}+2s_{1}s_{2}u_{xxy}+s_{2}^{2}u_{xyy}<0$ at $(0,\frac{\pi}{2})$. From this we see $u_{xxy}>0$ at $(0,\frac{\pi}{2})$. A similar argument shows $u_{xxy}<0$ at $(0,-\frac{\pi}{2})$. We are left only to show that $u_{yy}<0$ on $M$. The evenness of $u$ in the $y$ variable implies that $u_{y}=0$ along $M$, and the result follows from the Hopf lemma. 
\end{proof}

We now show that the collection of $(u,\lambda)$ satisfying \eqref{nodalprop} is both open and closed in an appropriate relative topology. 
\begin{lemma}[Open property]
\thlabel{open}
Let $(u,\lambda), (\Tilde{u},\Tilde{\lambda}) \in \mathcal{O}_{\delta} \cap \mathcal{F}^{-1}(0),\;\;\text{for some} \; \;\delta>0$.  Suppose that $0< \lambda, \Tilde{\lambda}$ and $u, \Tilde{u} \in C_{\textup{b,e}}^{3}(\overline{\Omega})\cap C_{0}^{2}(\overline{\Omega})$. If $u$ satisfies \eqref{nodalprop}, then there is some $\epsilon_{0}>0$ for which $|u-\Tilde{u}|_{3}+|\lambda -\Tilde{\lambda}|<\epsilon_{0}$ implies $\Tilde{u}$ also satisfies \eqref{nodalprop}. 
\end{lemma}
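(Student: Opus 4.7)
The plan is to reduce the six conditions in \eqref{nodalprop} to the two ``principal'' strict inequalities for $\tilde{u}$, namely $\tilde{u}_x < 0$ on $\Omega^+$ and $\tilde{u}_y < 0$ on $\Omega_+$. Indeed, $\tilde{u}_x$ satisfies a uniformly elliptic linearized equation of the form \eqref{quasilinearize} (since $(\tilde{u}, \tilde{\lambda}) \in \mathcal{O}_\delta$), so the Hopf boundary-point lemma, applied to the boundary pieces $L \subset \partial \Omega^+$ and $T \subset \partial \Omega^+$ on which $\tilde{u}_x = 0$, immediately produces $\tilde{u}_{xx} < 0$ on $L$ and $\tilde{u}_{xy} > 0$ on $T$. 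Similarly $\tilde{u}_y < 0$ on $\Omega_+$ plus Hopf at $M$ gives $\tilde{u}_{yy} < 0$ on $M$. The two pointwise conditions $u_{xxy}(0,\pm\pi/2) \neq 0$ with their fixed signs persist for $\tilde{u}$ as soon as $\epsilon_0 < \min\{|u_{xxy}(0,\pi/2)|,\,|u_{xxy}(0,-\pi/2)|\}$, by continuity in the $C^3$-topology.

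It thus remains to verify $\tilde{u}_x < 0$ on $\Omega^+$; the case $\tilde{u}_y < 0$ on $\Omega_+$ is handled identically. I split $\Omega^+$ into a tail $\{x \geq R\}$ and a bounded portion $\{0 < x \leq R + 1\}$. For the tail I exploit $u \in C_0^2(\overline{\Omega})$: choosing $R$ large makes $|u|_{C^2(\{x \geq R\})}$ drop below half the threshold $\epsilon_*$ of \thref{asymptotic}, and then $|u - \tilde{u}|_3 < \epsilon_*/2$ forces $|\tilde{u}|_{C^2(\{x \geq R\})} < \epsilon_*$ by the triangle inequality. Granting $\tilde{u}_x \leq 0$ on the slice $\{x = R\}$, \thref{asymptotic} propagates this non-positivity to the full tail.

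On the bounded piece $\Omega^+ \cap \{0 < x \leq R + 1\}$ I cover by six closed subregions, on each of which $\tilde{u}_x < 0$ is verified directly: an inner rectangle bounded away from $\partial\Omega^+$, where $u_x < -c < 0$ uniformly and the perturbation is trivial; a thin strip along $\{x = 0\}$ away from the corners, handled by combining the uniform bound $u_{xx}(0,y) < -c < 0$ on compact sub-arcs of $L$ (extended to a neighborhood by continuity) with $\tilde{u}_x(0,\cdot) \equiv 0$ (evenness in $x$) and integration in $x$; two strips near $\{y = \pm \pi/2\}$ away from the corners, treated analogously using $u_{xy}$ on $T$, $B$ and the boundary condition $\tilde{u}_x(x,\pm\pi/2) = 0$; and two small corner neighborhoods of $(0,\pm\pi/2)$. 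The corner case is the main obstacle, since at a corner $\tilde{u}_x$, $\tilde{u}_{xx}$ and $\tilde{u}_{xy}$ vanish simultaneously and Hopf does not apply. The remedy is a double Taylor expansion that factors
\[
u_x(x,y) = x\bigl(y - \tfrac{\pi}{2}\bigr)\,\phi(x,y), \qquad \phi(0,\tfrac{\pi}{2}) = u_{xxy}(0,\tfrac{\pi}{2}) > 0,
\]
with $\phi$ continuous near the corner. The analogous coefficient $\tilde{\phi}$ for $\tilde{u}$ obeys $|\tilde{\phi} - \phi|_{C^0} \leq |u - \tilde{u}|_3$ and therefore remains strictly positive on a small corner neighborhood, where $x(y - \pi/2) < 0$ then forces $\tilde{u}_x < 0$; the analogous factorization handles $(0,-\pi/2)$. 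The union of the six subregions covers $\Omega^+ \cap \{0 < x \leq R+1\}$ and in particular controls the slice $\{x = R\}$, closing the loop with the tail argument. Global $\tilde{u}_x \leq 0$ on $\Omega^+$ then yields strict negativity by the strong maximum principle applied to $\tilde{u}_x$.
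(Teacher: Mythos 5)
Your proof follows essentially the same strategy as the paper's: split $\Omega^+$ into a bounded rectangle and a tail, cover the bounded piece by an interior region, thin strips along $L$, $T$, $B$, and small corner neighborhoods of $(0,\pm\pi/2)$, exploit the second-order information $u_{xxy}(0,\pm\pi/2)\neq 0$ to resolve the degenerate corners, and propagate the sign into the tail via \thref{asymptotic}, then derive the remaining conditions of \eqref{nodalprop} from the two principal inequalities exactly as \thref{nodalproplemma} does. The only stylistic differences are cosmetic: you phrase the corner step as a Hadamard-type factorization $u_x = x(y-\tfrac{\pi}{2})\phi$ with $\phi(0,\tfrac{\pi}{2})=u_{xxy}(0,\tfrac{\pi}{2})>0$ (which is cleaner than the paper's Taylor-with-remainder), and you add an explicit strong maximum principle at the end to upgrade $\tilde{u}_x\le 0$ to $\tilde{u}_x<0$ in the tail — both are sound and consistent with the paper's argument.
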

\begin{proof}
We will establish the sign of either $\Tilde{u}_{x}$ or $\Tilde{u}_{y}$ in several finite regions, and then invoke \thref{asymptotic} to determine the signs in a leftover tail region. See Figure~\ref{domains} for a sketch of the domains used. Now, because $u \in C_{0}^{2}(\overline{\Omega})$, there is an $R>0$ large enough so that $|u|_{2}< \epsilon/2$ for $x>R$, where $\epsilon$ is chosen to satisfy \thref{asymptotic}. If $|u-\Tilde{u}|_{2}<\epsilon_{1}=\epsilon/2$, then $|\Tilde{u}(x,y)|_{2}<\epsilon$ for $x>R$. Let $\Omega^{+,2R}$ be the rectangle $(0,2R)\times(-\frac{\pi}{2},\frac{\pi}{2})$ and $\Omega^{+}_{k}$ the inscribed rectangle with distance $1/k$ from $\Omega^{+,2R}$. Let us define several regions useful for our analysis: 
\begin{gather*}
T_{k}:=\{(x,\frac{\pi}{2})\; : \; 1/k<x<2R-1/k\} \\ B_{k};=\{(x,- \frac{\pi}{2})\; : \; 1/k<x<2R-1/k\} \\ L_{k} := \{(0,y) \;: \: -\frac{\pi}{2}+1/k<y< \frac{\pi}{2} - 1/k \}. 
\end{gather*}
For a given $k>0$, there is an $\epsilon_{k}$ such that $|u-\Tilde{u}|_{3}<\epsilon_{k}$ implies $\Tilde{u}_{x}<0$ in $\overline{\Omega}_{k}$, $\Tilde{u}_{xx}>0$ on $L_{k}$, $\Tilde{u}_{xy}>0$ on $T_{k}$, and $\Tilde{w}_{xy}<0$ on $B_{k}$. 

Suppose $\epsilon_{0}<1$, and consider the Taylor expansion of $\Tilde{u}_{x}$ at a point $(x_{0},\frac{\pi}{2})$ on $T_{k}$:
\begin{gather} \label{tay}
    \Tilde{u}_{x}(x_{0},y)=\Tilde{u}_{xy}(x_{0},\frac{\pi}{2})(y-\frac{\pi}{2})+O((y-\frac{\pi}{2})^{2}) \; \;\; \text{in} \;\; \;C^{0}(\overline{\Omega}),
\end{gather}
where $\frac{\pi}{2}-1/k < y <\frac{\pi}{2}$. When $k$ is large enough, the remainder term in \eqref{tay} is dominated by the first term and $\Tilde{u}_{x}(x_{0},y)<0$. Analogous arguments show that for large enough $k$, $u_{x}<0$ in the rectangle $(0,1/k)\times (-\frac{\pi}{2}+1/k,\frac{\pi}{2} - 1/k)$, and that $u_{x}<0$ in $(1/k, 2R-1/k)\times(0,1/k)$. 

We still need to deal with the corners. For a given $k$, consider the quarter circle of radius $\frac{\sqrt{2}}{k}$ in $\Omega^{+,2R}$ centered at $(0,\frac{\pi}{2})$. Because $u_{x}, u_{xx}, u_{xy}, u_{xxx} =0$ at $(0,\frac{\pi}{2}$), 
\begin{gather*}
    \Tilde{u}_{x}(x,y)=\Tilde{u}_{xxy}(0,\frac{\pi}{2})(x)(y-\frac{\pi}{2})+O((y-\frac{\pi}{2})^{2})\;\;\; \text{in} \;\;\;C^{0}(\overline{\Omega}).
\end{gather*}
For a given $k$ there exists an $\epsilon_{k}'$ so small that $|u-\Tilde{u}|_{3}<\epsilon_{k}'$ implies that $\Tilde{u}_{xxy}(0,\frac{\pi}{2})>0$. Arguing like before, we see that $\Tilde{u}_{x}(x,y)<0$ in the quarter circle, whenever $k$ is sufficiently large. A similar argument shows that $u_{x}<0$ in quarter circle of radius $k$ centered at $(0,-\frac{\pi}{2})$. 
\begin{figure} \label{domains}
\centering
\begin{tikzpicture}[scale=0.62]
\draw  (0,.43) rectangle (10,3.57) ; 
\draw[thick] (8,0.43)--(8, 3.57); 
\draw[draw] (0, .43) --(1, 0.43) -- (1,.43) arc [start angle=0, end angle=90, radius=1]--(0, .43);
\draw[draw] (0,3.57) --(0, 2.57) -- (0,2.57) arc [start angle=-90, end angle=0, radius=1]--(0, 3.57);
\draw[dashed] (0.75, 0.43)--(0.75, 3.57); 
\draw[dashed] (7.25,0.43)--(7.25,3.57); 
\draw [dashed] (0,1.02)--(8, 1.02); 
\draw [dashed] (0, 2.97)--(8, 2.97); 
\node[align=left] at (-0.2,4.2) {$(0,\frac{\pi}{2})$};
\node[ align=left] at (-0.2,-.3) {$(0,-\frac{\pi}{2})$};
\filldraw [black] (0,0.43) circle (2pt);
\filldraw[black] (0,3.57) circle (2pt); 
\node[align=left] at (4,-0.2) {$(R,0)$}; 
\node[align = left] at (8,-0.2){$(2R,0)$};
\filldraw [black] (4,0.43) circle (2pt);
\filldraw[black] (8,0.43) circle (2pt);
\node[align=left] at (4,4) {$T_{k}$}; 
\node[ align=left] at (4,2) {\large $\Omega^{+}_{k}$};
\node[align=left] at (-0.45, 2) {$L_{k}$};
\end{tikzpicture}
\hspace{0.5cm}
\begin{tikzpicture}[scale=0.62]
\draw [black] (0, .43) rectangle (12,3.57); 
\draw[black, dashed] (2,1)--(10,1);
\node[ align=left] at (6,2) {\large $\Omega_{+,k}$};
\node[ align=left] at (6,4) {$(0,\frac{\pi}{2})$};
\node[ align=left] at (6,-0.2) {$(0,0)$};
\filldraw [black] (6,0.43) circle (2.3pt);
\filldraw[black] (6, 3.57) circle (2.3pt);
\draw (10, 0.43)--(10,3.57); 
\draw(2, 0.43)--(2,3.57); 
\node[ align=left] at (8,-0.2) { $M^{2R}$};
\node[align =left] at (10, -0.2) {$(2R,0)$}; 
\node[align=left] at (2,-0.2) {$(-2R,0)$}; 
\filldraw [black] (10,0.43) circle (2pt);
\filldraw[black] (2,0.43) circle (2pt); 
\end{tikzpicture}
\caption{Left: Regions use to control the sign of $\tilde{u}_{x}$. Right: Regions used to control the sign of $\tilde{u}_{y}$. }
\end{figure}
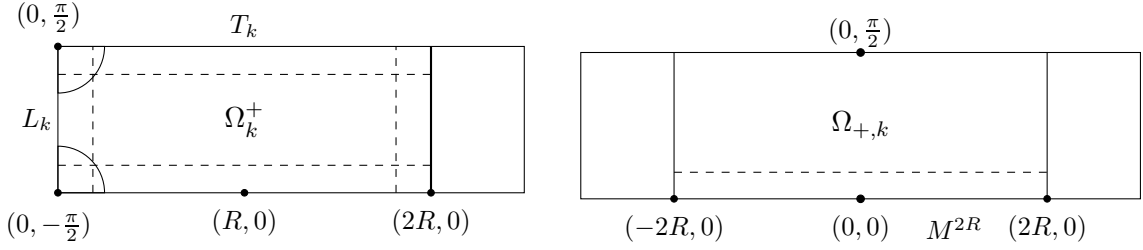
From the work above, we find that if $k$ is taken sufficiently large, and $\epsilon$ taken sufficiently small, then $\tilde{u}_{x}<0$ in $\Omega^{+,2R}$. In particular, $\tilde{u}_{x}<0$ on the line segment with $x=R$ and $-\frac{\pi}{2} < y < \frac{\pi}{2}$. \thref{asymptotic} now implies that $\tilde{u}_{x} <0$ in $\Omega^{+}$. If we can show that $\tilde{u}_{y}<0$ in $\Omega_{+}$, then we will be able invoke \thref{nodalproplemma} to get the desired result. 

The argument to establish a sign on $\tilde{u}_{y}$ is similar to the one just given for $\tilde{u}_{x}$, so we provide only a sketch. Let $\Omega_{+,2R} = \Omega_{+}\cap \{ (x,y) \; : \; |x |\leq 2R\}$ and $\Omega_{+,k} = \Omega_{+}\cap \{ (x,y) \; : \; |x| \leq 2R, y>\frac{1}{k}\}$ and $M^{2R} = M \cap \{(x,y) \; : \; |x| \leq 2R\}$. We see that \eqref{quasilinearize} holds for $v=u_{y}$ in $\Omega_{+}$, except the homogeneous Dirichlet condition is lost. Thus, $u_{y}$ satisfies a uniformly elliptic PDE with a non positive zeroth order coefficient in the tail region $\Omega_{+}\cap \{ (x,y) \; : \; |x| \geq R\}$, where $R$ is the same constant from the above argument. For small enough $\epsilon_{0}$ and $k$ we find that $\tilde{u}_{y}<0$ on $\Omega_{+,k}$ and $\tilde{u}_{yy}  < 0$ on $M^{2R}$. If $k$ is sufficiently large, then from a Taylor expansion along $M^{2R}$ we find that $\tilde{u}_{y}<0$ in $\Omega_{+,2R}$. Thus, a sign condition for $\tilde{u}_{y}$ is established in the finite region $\Omega_{+,2R}$. To deal the corresponding infinite tails, we just need to establish good boundary values, since we know $\tilde{u}_{y}$ satisfies the maximum principle whenever $|x|>R$ (see \thref{asyp_ext}). From the above argument, we have seen that if $\epsilon_{0}$ is small enough, then $\tilde{u}_{xy}>0$ on $T$. This, along with the decay of $\tilde{u}$ at $x = \infty$, is enough to establish that $\tilde{u}_{y}<0$ on all of $T$. A symmetric argument will show that $u_{y}<0$ on the line segment $\{(x,\frac{\pi}{2})\; : \; -\infty< x <0 \}$. Also, $\tilde{u}_{y}=0$ on $M$ by evenness in the $y$ variable. Finally, since $\tilde{u}_{y}\leq0$ on the line segment with $x=R$ and $0\leq y \leq \frac{\pi}{2}$ (and on the segment with $x=-R$ and $0 \leq y \leq \frac{\pi}{2}$, by evenness in the $x$ variable), we conclude that $\Tilde{u}_{y}<0$ on $\Omega_{+}$.    
\end{proof}

\begin{lemma}[Closed property] \thlabel{closedprop}
Let $\{(u_{n},\lambda_{n})\} \subset \mathcal{O}_{\delta}\cap \,\mathcal{F}^{-1}(0)$, for some $\delta>0$. Suppose that $(u_{n},\lambda_{n})\to (u,\lambda)$ in $C^{3}_{\textup{b}}(\overline{\Omega})\times \mathbb{R}$. If each $u_{n}$ satisfies \eqref{nodalprop}, then so does $u$, unless $u\equiv0$. 
\end{lemma}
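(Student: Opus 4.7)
My plan is to first pass the non-strict versions of all the inequalities in \eqref{nodalprop} to the limit using the $C^3_{\text{b}}$ convergence, and then upgrade to strict inequalities via the strong maximum principle, handling the degenerate case (equality somewhere in the interior) by showing that it forces $u\equiv 0$. The first step is straightforward: continuity of $\mathcal{F}$ gives $\mathcal{F}(u,\lambda)=0$; evenness in $x$ and $y$ and the Dirichlet condition on $\partial\Omega$ pass to the limit; each weak inequality passes to the limit; an $\epsilon/2$ argument using $u_n\in \mathcal{O}_\delta\subset X\subset C^{2}_{0}$ shows the derivatives of $u$ up to order two still decay at infinity; and since $\mathcal{W}',\mathcal{W}''$ are continuous and $\nabla u_n\to \nabla u$ uniformly, the uniform ellipticity bound $\mathcal{W}'(|\nabla u|^2)+2|\nabla u|^2 \mathcal{W}''(|\nabla u|^2)\geq \delta$ is preserved.

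The central step is the strict inequality for $u_x$. Differentiating \eqref{cub} in $x$ as in \eqref{quasilinearize}, the function $v:=u_x$ solves a uniformly elliptic linear PDE on $\Omega^{+}$. By the work above, $v\leq 0$ on $\Omega^+$, and $v$ vanishes on $\{x=0\}$ (by evenness of $u$ in $x$) and on $\{y=\pm\pi/2\}$ (since $u\equiv 0$ there). The strong maximum principle for linear elliptic operators, applied to the solution $-v\geq 0$ whose minimum value is $0$, gives a dichotomy: either $v<0$ throughout $\Omega^+$, or $v\equiv 0$ on $\Omega^+$. In the degenerate case, $u$ is $x$-independent on $\Omega^+$ and hence, by evenness in $x$, $u(x,y)=U(y)$ on all of $\Omega$; but then $u_y=U'(y)$ is $x$-independent, while the decay of $u_y$ at $|x|=\infty$ forces $U'\equiv 0$. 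The Dirichlet condition $U(\pm\pi/2)=0$ then gives $u\equiv 0$.

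An entirely parallel argument for $u_y$ on $\Omega_{+}$ yields either $u_y<0$ in $\Omega_{+}$ or (in the degenerate case) $u=U(x)$, which combined with $u(x,\pi/2)=0$ again forces $u\equiv 0$. Assuming $u\not\equiv 0$, we now have the strict inequalities $u_x<0$ on $\Omega^+$ and $u_y<0$ on $\Omega_+$; all remaining conditions in \eqref{nodalprop} then follow by invoking \thref{nodalproplemma} directly (the required regularity and decay of $u$ having been verified, with the $C^{3+\alpha}$ component bootstrapped from \eqref{cub} by Schauder estimates if needed). The main obstacle is the degenerate case analysis, which is what separates this lemma from a routine limit passage; it is resolved cleanly by leveraging the combined force of even symmetry, Dirichlet data, and the decay of derivatives inherited from the sequence.
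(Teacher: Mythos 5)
Your proof is correct and follows essentially the same route as the paper's: pass the weak inequalities, symmetry, Dirichlet condition, and decay to the limit; apply the strong maximum principle to $u_x$ on $\Omega^{+}$ and $u_y$ on $\Omega_{+}$ (which solve the linearized equation) to obtain the dichotomy "strict inequality or identically zero"; rule out the degenerate case by noting it forces $u\equiv 0$; and then invoke \thref{nodalproplemma}. The paper's version is terser — it simply says "if $u_x$ is not trivial, then $u_x<0$ and $u_y<0$" and leaves the preservation of decay, ellipticity, and regularity implicit — whereas you spell those out (and your degenerate-case argument via $u_y$ and $U'\equiv 0$ is a slight detour: the $x$-independence of $u$ plus the decay of $u$ itself already gives $u\equiv 0$), but the underlying argument is the same.
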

\begin{proof}
By continuity we have that $u_{x}\leq 0$ in $\Omega^{+}$, $u_{x}=0$ on $\partial \Omega$, and $u_{y} \leq 0$ in $\Omega_{+}$. So $u_{x}$ and $u_{y}$ each satisfy the strong maximum principle (\hyperref[strong_max]{\thref{max_pricp}.\ref{strong_max}}) in the relevant domain because $\mathcal{F}_{u}(u,\lambda)u_{x}=0$ and $\mathcal{F}_{u}(u,\lambda)u_{y}$. Hence, if $u_{x}$ is not trivial, then $u_{x}<0$ in $\Omega^{+}$ and $u_{y}<0$ in $\Omega_{+}$. \thref{nodalproplemma} now implies that $u$ satisfies \eqref{nodalprop}.    
\end{proof}

Next, we show that \eqref{nodalprop} holds along $\mathcal{C}^{I,II}_{\text{loc}}$, which in turn shows that they hold on all of $\mathcal{C}^{I,II}$. 

\begin{lemma}[Nodal properties of the local curve] \thlabel{npropsmall}
 If $(u^{\epsilon}, \epsilon^{2}) \in \mathcal{C}^{I,II}_{\textup{loc}}$ and $0< \epsilon \ll 1$, then $u^{\epsilon}$ exhibits the nodal properties \eqref{nodalprop}. 
\end{lemma}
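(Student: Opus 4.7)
The plan is to leverage \thref{Positive} and \thref{nodalproplemma} to reduce the entire list \eqref{nodalprop} to a single sign condition. Indeed, \thref{Positive} already supplies the evenness of $u^{\epsilon}$ in both $x$ and $y$ together with $u^{\epsilon}_{x}<0$ on $\Omega^{+}$. By \thref{nodalproplemma}, it then suffices to establish that $u^{\epsilon}_{y}<0$ throughout $\Omega_{+}$, and the remaining five conditions (signs of $u_{xx}$, $u_{xy}$, $u_{xxy}$, $u_{yy}$) will follow from Hopf's lemma and Serrin's corner lemma exactly as in \thref{nodalproplemma}.

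To verify $u^{\epsilon}_{y}<0$ on $\Omega_{+}$, I would split $\Omega_{+}$ into a compact box and a tail and use the asymptotics \eqref{smallsol} on the box. Writing $U_{0}(x,y):=a_{1}\epsilon\operatorname{sech}(\epsilon x)\cos y$, a direct computation gives $\partial_{y}U_{0}=-a_{1}\epsilon\operatorname{sech}(\epsilon x)\sin y$, while \eqref{smallsol} provides $|u^{\epsilon}-U_{0}|_{C^{3}_{\textup{b}}}=O(\epsilon^{2})$. Fixing $R>0$ and a small $\delta>0$, on the rectangle $[-R,R]\times[\delta,\pi/2-\delta]$ one has $|\partial_{y}U_{0}|\geq c(R,\delta)\epsilon$, which beats the $O(\epsilon^{2})$ error for small $\epsilon$. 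Near the degenerate line $\{y=0\}$ I would Taylor expand: evenness in $y$ gives $u^{\epsilon}_{y}(x,0)=0$, while the leading order supplies $u^{\epsilon}_{yy}(x,0)=-a_{1}\epsilon\operatorname{sech}(\epsilon x)+O(\epsilon^{2})$, negative uniformly for $|x|\leq R$; hence $u^{\epsilon}_{y}<0$ in a strip $\{0<y<\eta\}\cap\{|x|\leq R\}$. Near $\{y=\pi/2\}$, since $u^{\epsilon}>0$ in $\Omega$ and $u^{\epsilon}=0$ on $\partial\Omega$ (by \thref{Positive}), Hopf's lemma yields $u^{\epsilon}_{y}(x,\pi/2)<0$ strictly, and continuity gives a one-sided neighborhood.

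The main obstacle is the tail $\Omega_{+}\cap\{|x|>R\}$, where the $C^{3}_{\textup{b}}$ asymptotic is too crude to conclude dominance of $U_{0}$: the leading term decays in $x$ while the error bound does not. I would handle this with the $\varphi$-trick from \thref{asymptotic} adapted to $v:=u^{\epsilon}_{y}$, which still satisfies \eqref{quasilinearize}. Choose $\varphi(y):=\cos(\sqrt{k}\,y)$ with $1-\epsilon^{2}<k<1$, so that $\varphi>0$ on $[0,\pi/2]$; writing $v=\varphi z$ and using $\mathcal{W}'(0)=1$, the analysis leading to \eqref{neg_zeroth} shows that $z$ satisfies a uniformly elliptic equation whose zeroth-order coefficient is $(1-\lambda-k)+O(\epsilon^{2})<0$ once $\epsilon$ is small. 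The boundary data for $z$ in the tail is favorable: $z=0$ on $\{y=0\}$ (by evenness, $v=0$), $z<0$ on $\{y=\pi/2\}$ (by Hopf), $z<0$ on $\{|x|=R\}$ (from the bounded-region step), and $z\to 0$ as $|x|\to\infty$ (since $u^{\epsilon}\in C^{2}_{0}(\overline{\Omega})$). The maximum principle \hyperref[strong_max]{\thref{max_pricp}.\ref{strong_max}} then forces $z\leq 0$, and the strong form rules out an interior zero. Consequently $v=u^{\epsilon}_{y}<0$ throughout the tail.

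Combining the compact and tail analyses yields $u^{\epsilon}_{y}<0$ on all of $\Omega_{+}$ once $0<\epsilon\ll 1$. An appeal to \thref{nodalproplemma} then delivers the full list \eqref{nodalprop}, completing the proof. The principal difficulty, as indicated above, is the tail argument, because the $C^{3}_{\textup{b}}$ asymptotic does not convey the pointwise decay of $u^{\epsilon}$; the inhomogeneous Dirichlet data of $v=u^{\epsilon}_{y}$ on $\{y=\pm\pi/2\}$ (contrast with $u^{\epsilon}_{x}$, which is homogeneous there) makes the choice of barrier and the Hopf input essential.
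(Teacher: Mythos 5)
Your reduction is correct and matches the paper: by \thref{Positive} one already has evenness and $u^\epsilon_x<0$ in $\Omega^+$, so by \thref{nodalproplemma} it suffices to show $u^\epsilon_y<0$ in $\Omega_+$. Your boundary preparations (Hopf at $T$, evenness giving $u^\epsilon_y=0$ on $M$, decay in $x$) are also the ones the paper uses. The gap lies in the tail argument. You conjugate $v=u^\epsilon_y$ by $\varphi(y)=\cos(\sqrt{k}\,y)$ with $1-\epsilon^2<k<1$ and assert the resulting zeroth-order coefficient is $(1-\lambda-k)+O(\epsilon^2)<0$. But with $\lambda=\epsilon^2$, the constraint $k\in(1-\epsilon^2,1)$ forces $1-\lambda-k\in(-\epsilon^2,0)$, so the "dominant" term is itself only $O(\epsilon^2)$ — the same order as the error $O(|u^\epsilon|_2^2)=O(\epsilon^2)$ coming from the expansions \eqref{w}--\eqref{b}. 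No sign is forced, and because $c_1<0$, $b_1\le 0$, the leading error contributions actually have the unfavorable sign. This is precisely why \thref{asymptotic}, whose threshold $\epsilon_0$ is tied to $\lambda$ through the margin $k-(1-\lambda)$, cannot be invoked here in the regime $\lambda=\epsilon^2\to 0$ with $|u^\epsilon|_2\sim\epsilon$. One could try to rescue your argument by taking $R=R(\epsilon)$ large enough that $|u^\epsilon|_2\ll\epsilon$ on $\{|x|>R\}$, but then $\operatorname{sech}(\epsilon R)$ becomes small and the dominance of $\partial_y U_0$ over the $O(\epsilon^2)$ error on the compact box must be re-examined; none of this bookkeeping appears in your write-up, and as stated the argument does not close.

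The paper avoids the issue entirely by reusing the supersolution from the proof of \thref{Positive}: it takes the $x$-independent comparison function $\Phi_\epsilon(y)=\log(2+\sqrt{\epsilon}y)\cos(\sqrt{1-\lambda}y)$, whose defect (see \eqref{zz}, with $f=\log(2+\sqrt{\epsilon}y)$) is of order $\epsilon$ near $y=0$ and of order $\sqrt{\epsilon}$ for $y$ bounded away from $0$, in either case strictly dominating the $O(\epsilon^2)$ error. This gives a single positive supersolution valid on all of $\Omega_+$, so no compact/tail decomposition is needed; the maximum principle with a positive supersolution (\hyperref[pos_sup_sol]{Theorem A.1.\ref{pos_sup_sol}}), combined with the boundary data you also identified, then yields $u^\epsilon_y<0$ in $\Omega_+$ directly. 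If you want to keep your decomposition, replace $\cos(\sqrt{k}y)$ by $\Phi_\epsilon$ (or by a barrier with the same $\sqrt{\epsilon}$-sized defect); otherwise drop the decomposition and argue globally as the paper does.
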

\begin{proof}
In \thref{Positive}, we established that $u^{\epsilon}_{x}<0$ in $\Omega^{+}$. Since $u^{\epsilon}_{x} =0$ on $T$, the Hopf lemma (\hyperref[hopf]{\thref{max_pricp}.\ref{hopf}}) implies that $u^{\epsilon}_{xy}>0$ on $T$. From \eqref{smallsol}, we know that $u^{\epsilon}_{y}(0,\frac{\pi}{2})<0$ for small enough $\epsilon$. Combining this with the decay of $u_{y}^{\epsilon}$ at infinity allows us to conclude that $u^{\epsilon}_{y}<0$ along all of $T$.

We now proceed as in the proof of \thref{Positive}. We have seen that $u^{\epsilon}$ satisfies equation \eqref{quasilinearize}. If we consider the corresponding uniformly elliptic operator acting on $x$-independent functions of the form $v=f(y)\cos(\sqrt{1-\lambda}y)$, then we obtain an expression with the following asymptotics in $C^{0}(\overline{\Omega})$ 
\begin{empheq}{align}
\label{zz}
\begin{split}
&(1+O(\epsilon^{2})(f''\cos(\sqrt{1-\lambda}y)-2\sqrt{1-\lambda}f'\sin(\sqrt{1-\lambda}y)) \\+ O(\epsilon^{2})(f'&\cos(\sqrt{1-\lambda}y)-\sqrt{1-\lambda}f\sin(\sqrt{1-\lambda}y))+O(\epsilon^{2}f\cos(\sqrt{1-\lambda}y)).
\end{split}
\end{empheq}
Inspecting \eqref{zz} shows that if we choose $f=\Phi_{\epsilon}$, as in the proof of \thref{Positive}, then for sufficiently small $\epsilon$ we can ensure \eqref{zz} is negative for $0\leq y \leq \frac{\pi}{2}$. The boundary condition on $T$, evenness in $y$ (which implies $u^{\epsilon}_{y}=0$ on $M$), maximum principle for uniformly elliptic operators with a positive super-solution (\hyperref[pos_sup_sol]{\thref{max_pricp}.\ref{pos_sup_sol}}), and decay in $x$ are now enough to conclude that $u^{\epsilon}_{y}<0$ in $\Omega_{+}$. Now, \thref{nodalproplemma} implies the result. 
\end{proof}

\begin{theorem}[Global nodal properties]
\thlabel{globalnodal}
Every $(u,\lambda) \in \mathcal{C}^{I,II}$ exhibits the nodal properties \eqref{nodalprop}. 
\end{theorem}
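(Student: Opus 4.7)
The plan is to run a connectedness argument on the global parameter $s$. Define
\[
    S := \{s \in (0, \infty) \; : \; (u(s), \lambda(s)) \text{ satisfies \eqref{nodalprop}}\}.
\]
I will show that $S$ is nonempty, open, and closed in $(0,\infty)$, and conclude $S = (0,\infty)$ by connectedness. Nonemptiness is immediate from \thref{npropsmall}: for all sufficiently small $s>0$, the point $(u(s),\lambda(s))$ lies on $\mathcal{C}^{I,II}_{\textup{loc}}$ and therefore satisfies \eqref{nodalprop}. For openness, let $s_0 \in S$. Since $(u(s_0),\lambda(s_0)) \in \mathcal{O}$, there exists $\delta>0$ with $(u(s_0),\lambda(s_0)) \in \mathcal{O}_\delta$. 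The $C^0$ parameterization of $\mathcal{C}^{I,II}$ in $X\times\mathbb{R}$, together with the embedding $X \hookrightarrow C^{3+\alpha}_{\textup{b}}(\overline{\Omega})$, ensures that for all $s$ sufficiently close to $s_0$ we have $(u(s),\lambda(s)) \in \mathcal{O}_{\delta/2}$ with $|u(s)-u(s_0)|_3 + |\lambda(s) - \lambda(s_0)|$ arbitrarily small. \thref{open} then yields $s \in S$.

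For closedness, suppose $s_n \to s^* \in (0,\infty)$ with each $s_n \in S$. The $C^0$ parameterization gives $(u(s_n),\lambda(s_n)) \to (u(s^*), \lambda(s^*))$ in $X \times \mathbb{R}$, and in particular in $C_{\textup{b}}^3(\overline{\Omega}) \times \mathbb{R}$. Since $(u(s^*),\lambda(s^*)) \in \mathcal{O}$, there is $\delta>0$ so that the whole tail of the sequence together with its limit lies in the common set $\mathcal{O}_{\delta/2}$. \thref{closedprop} then forces $u(s^*)$ to satisfy \eqref{nodalprop} \emph{unless} $u(s^*) \equiv 0$.

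The main obstacle is ruling out this trivial limit, which I will resolve using \thref{linearized at 0} together with the implicit function theorem. Suppose for contradiction that $u(s^*) \equiv 0$. Since $\mathcal{C}^{I,II} \subset \mathcal{O} \times \mathcal{I}$ with $\mathcal{I} = (0,1)$, we have $\lambda(s^*) \in (0,1)$, and \thref{linearized at 0} yields that $\mathcal{F}_u(0,\lambda(s^*)) : X \to Y$ is invertible. Applying the analytic implicit function theorem to $\mathcal{F}$ at $(0,\lambda(s^*))$, the zero set of $\mathcal{F}$ in some neighborhood of $(0,\lambda(s^*))$ in $X \times \mathbb{R}$ coincides with the trivial branch $\{(0,\lambda)\}$. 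But then $u(s_n) \equiv 0$ for all large $n$, contradicting $s_n \in S$, since any $u$ satisfying \eqref{nodalprop} has $u_{xx} < 0$ on $L$ and is therefore nontrivial. Hence $u(s^*) \not\equiv 0$ and $s^* \in S$, so by connectedness of $(0,\infty)$ we conclude $S = (0,\infty)$.
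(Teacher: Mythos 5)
Your proof follows the same connectedness strategy the paper uses: nonemptiness via \thref{npropsmall}, openness via \thref{open}, and closedness via \thref{closedprop}, concluding by connectedness of the parameter interval. The one substantive difference is that you explicitly dispose of the escape clause in \thref{closedprop} (``unless $u\equiv 0$''), which the paper's one-line proof passes over in silence. Your resolution is correct: since $\lambda(s^*)\in\mathcal{I}=(0,1)$, \thref{linearized at 0} makes $\mathcal{F}_u(0,\lambda(s^*))$ invertible, so the implicit function theorem forces every solution near $(0,\lambda(s^*))$ in $X\times\mathbb{R}$ to lie on the trivial branch, contradicting that $u(s_n)$ satisfies \eqref{nodalprop} (hence $u_{xx}(s_n)<0$ on $L$, so $u(s_n)\not\equiv 0$). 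This nontriviality of the global curve is implicitly relied upon by the paper and is part of the machinery of the underlying global bifurcation theorem of \cite{chen2018existence}; making it explicit, as you do, is a genuine and welcome tightening rather than a different route. One small care you correctly exercise that deserves mention: to apply \thref{open} and \thref{closedprop} you need the relevant points to sit in a \emph{common} $\mathcal{O}_\delta$, and you obtain this from the $C^0$ parameterization of the curve in $X\times\mathbb{R}$ together with continuity of $\mathcal{W}'+2q\mathcal{W}''$ and the decay built into $X$.
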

\begin{proof}
Let $(u,\lambda) \in \mathcal{C}^{I,II}_{\text{loc}}$. From \thref{npropsmall} $u$ satisfies \eqref{nodalprop}. Since the nodal properties are both open and closed in the relative topology of $\mathcal{C}^{I,II}$ by \thref{open} and \thref{closedprop}, we conclude that they hold everywhere on $\mathcal{C}^{I,II}$. 
\end{proof}

\end{section}

\begin{section}{Uniform regularity and bounds on loading parameter}
\label{UR}
The main result of this section, which is stated in \thref{apriori}, is that $|u(s)|_{3+\alpha}$ is uniformly bounded along $C^{I,II}_{\delta}$. This is achieved by first using Schauder theory to estimate $|u(s)|_{3+\alpha}$ in terms of $|\nabla u(s)|_{0}$ and then estimating $|\nabla u(s)|_{0}$ in terms of $|u(s)|_{0}$ and $|\lambda(s)|$ by a maximum principle argument. Upper bounds on $|u(s)|_{0}$ and $|\lambda(s)|$ are then established for $C_{\delta}^{I,II}$. Finally, for $s\gg1$ it is shown that there is a positive uniform lower bound on $\lambda(s)$ along $\mathcal{C}^{I,II}_{\delta}$.  

\subsection{A conserved quantity and $L^{p}$ estimates}
We derive a conserved quantity of the system that will play a key role establishing uniform bounds on $|u(s)|_{0}$ and $|\lambda(s)|$ along $\mathcal{C}^{I,II}_{\delta}$. These results, in tandem with \thref{sperbtype}, give our desired a priori estimates. The following calculation is valid for any $C^2$ solution of \eqref{cub}. Let 
\begin{gather}
    \mathcal{L}(z,\xi,\eta,\lambda):=\frac{1}{2}\mathcal{W}(|\xi^2+\eta^2|^{2})+B(z,\lambda)
\end{gather}  
where $$B(z,\lambda) := \int_{0}^{z}b(t, \lambda)\, dt.$$ The anti-plane elastostatic problem \eqref{cub} is formally the Euler--Lagrange equation given formally by 
\begin{gather*}
    \delta \, \int_{\Omega} \mathcal{L}(u,|\nabla u|^{2},\lambda) \, dx \, dy = 0. 
\end{gather*}
Naturally, the translation invariance in $x$ of our system leads us to expect a corresponding conserved quantity. Consider the functional 
\begin{empheq}{align}
\label{hamiltonian}
\begin{split}
   \mathcal{H}(u,\lambda; x):&= \int_{-\frac{\pi}{2}}^{\frac{\pi}{2}} (\mathcal{L}(u,|\nabla u|^{2},\lambda)-\mathcal{L}_{\xi}(u,|\nabla u|^{2}, \lambda) u_{x}) \, dy  \\
   &= \int_{-\frac{\pi}{2}}^{\frac{\pi}{2}} (\frac{1}{2}\mathcal{W}(|\nabla u|^{2})-\mathcal{W}'(|\nabla u|^{2}) u_{x}^{2} + B(u,\lambda) )\, dy.
\end{split}
\end{empheq}
If $(u,\lambda)$ solves \eqref{cub}, then $\mathcal{H}(u,\lambda; \cdot)$ is constant in $x$:
\begin{align*}
    \partial_{x}\mathcal{H}&= \int_{-\frac{\pi}{2}}^{\frac{\pi}{2}}\left( \mathcal{L}_{z}u_{x}+\mathcal{L}_{\xi}u_{xx}+\mathcal{L}_{\eta}u_{xy}-(\partial_{x}\mathcal{L}_{\xi})u_{x}-\mathcal{L}_{\xi}u_{xx}\right) \,dy  \\ 
    &=\int_{-\frac{\pi}{2}}^{\frac{\pi}{2}} u_{x}(\mathcal{L}_{z}-\partial_{y}\mathcal{L}_{\eta}-\partial_{x}\mathcal{L}_{\xi} )\,dy =0
\end{align*}
where we used integration by parts and that
\begin{gather*}
    (\mathcal{L}_{z}-\partial_{y}\mathcal{L}_{\eta}-\partial_{x}\mathcal{L}_{\xi})(y,u,u_{x},u_{y},\lambda) = \mathcal{F}(u,\lambda)=0.
\end{gather*}
It is clear that $\mathcal{H}(u(x,y),\lambda; x)\to0$ as $x \to \infty$, so $\mathcal{H}$ is identically $0$. We record this as a lemma. Note that the arguments of $\mathcal{H}$ will often be suppressed in the sequel. 

\begin{lemma}[Conserved quantity]
\thlabel{Hconst}
Let $u \in C^{2}(\overline{\Omega})$ be a solution to \eqref{cub} for a fixed $\lambda$. Then $\mathcal{H}$ is constant in $x$. In particular, if $u \in X$, then $\mathcal{H}=0$.  
\end{lemma}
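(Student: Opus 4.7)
The plan is to differentiate $\mathcal{H}(u,\lambda;x)$ in $x$ under the integral sign and exploit the fact that $u$ solves the Euler--Lagrange equation associated with $\mathcal{L}$. Concretely, applying the chain rule to \eqref{hamiltonian} produces
\[
\partial_{x}\mathcal{H} = \int_{-\pi/2}^{\pi/2}\bigl(\mathcal{L}_{z}u_{x} + \mathcal{L}_{\xi}u_{xx} + \mathcal{L}_{\eta}u_{xy} - (\partial_{x}\mathcal{L}_{\xi})u_{x} - \mathcal{L}_{\xi}u_{xx}\bigr)\,dy,
\]
and the two $\mathcal{L}_{\xi}u_{xx}$ contributions cancel at once.

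The next step is to integrate the term $\mathcal{L}_{\eta}u_{xy}$ by parts in $y$. The boundary term $\bigl[\mathcal{L}_{\eta}u_{x}\bigr]_{y=-\pi/2}^{y=\pi/2}$ vanishes because the clamped condition $u\equiv 0$ on $\partial\Omega$ implies $u_{x}\equiv 0$ there (differentiating tangentially). What remains is
\[
\partial_{x}\mathcal{H} = \int_{-\pi/2}^{\pi/2} u_{x}\bigl(\mathcal{L}_{z} - \partial_{y}\mathcal{L}_{\eta} - \partial_{x}\mathcal{L}_{\xi}\bigr)\,dy.
\]
The parenthesized quantity is precisely the Euler--Lagrange operator for $\mathcal{L}$, which a direct computation identifies with the left-hand side of \eqref{cub}, so the integrand vanishes pointwise by hypothesis. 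Therefore $\partial_{x}\mathcal{H}\equiv 0$, which establishes the first assertion.

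For the second assertion, I would observe that if $u \in X$ then $u \in C^{2}_{0}(\overline{\Omega})$, so $u$, $u_{x}$, and $u_{y}$ tend uniformly to $0$ as $|x|\to\infty$. Combined with $\mathcal{W}(0)=0$ and $B(0,\lambda)=0$ (both of which follow from the expansions in \eqref{analytic_assumptions}), every term in the integrand of \eqref{hamiltonian} tends uniformly to zero on the cross-section, hence $\mathcal{H}(u,\lambda;x)\to 0$ as $x\to\infty$. Since $\mathcal{H}$ is constant in $x$, this forces $\mathcal{H}\equiv 0$. The only delicate bookkeeping is the boundary contribution in the integration by parts, and this is handled for free by the Dirichlet condition on $\partial\Omega$; otherwise the argument is a textbook Noether-style derivation of the horizontal conservation law.
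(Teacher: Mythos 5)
Your proof is correct and follows essentially the same route as the paper: differentiate $\mathcal{H}$ in $x$, cancel the $\mathcal{L}_{\xi}u_{xx}$ terms, integrate $\mathcal{L}_{\eta}u_{xy}$ by parts in $y$ (with the boundary term killed by $u_x=0$ on $\partial\Omega$), recognize the remaining integrand as $u_x$ times the Euler--Lagrange operator, and use decay as $x\to\infty$ to pin the constant at $0$. The only cosmetic difference is that you spell out the decay of the integrand in slightly more detail than the paper, which simply asserts it is clear.
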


The conserved quantity and growth conditions of both $\mathcal{W}$ and $b$ are enough obtain a uniform bound on $|u(s)|_{0}$ (and on $|\lambda(s)|$, as shown in Subsection~\ref{boundsonlambda_sub}). 

\begin{lemma}[$L^{p}$ bounds]
\thlabel{top}
There exists a constant $C(c_{1}, c_{2}, b_{1})$ such that if $u \in X$ is a solution to \eqref{cub}, corresponding to Model I, with $0<\lambda $, then 
\begin{gather}
    \|u(0,\cdot)\|_{2},\;\; \|u_{y}(0,\cdot)\|_{6}, \; \; |u(0,\cdot)|_{1/2} \leq C,
\end{gather}
where $\| \cdot \|_{p} $ denotes the $L^{p}$ norm on $[-\frac{\pi}{2},\frac{\pi}{2}]$. Moreover, for any $x_{0} \in \mathbb{R}$ we have
\begin{gather*}
    \|u(x_{0},\cdot)\|_{2}, \;\;|u(x_{0},0)|_{0} \leq C
\end{gather*}
\end{lemma}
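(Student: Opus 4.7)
The strategy is to exploit the conserved quantity $\mathcal{H}\equiv 0$ from \thref{Hconst} at $x=0$, where evenness of $u\in X$ in $x$ forces $u_x(0,\cdot)\equiv 0$. This collapses $\mathcal{H}(u,\lambda;0)=0$ to the scalar identity
\begin{gather*}
    \int_{-\pi/2}^{\pi/2}\left[\tfrac{1}{2}\mathcal{W}\big(u_y(0,y)^2\big)+B\big(u(0,y),\lambda\big)\right]\,dy = 0,
\end{gather*}
which couples $u(0,\cdot)$ and $u_y(0,\cdot)$ on a bounded interval. The plan is to convert this into a nonlinear integral inequality for $\|u_y(0,\cdot)\|_6$ using the growth conditions of Model I, and then extract every other advertised bound from a bound on $\|u_y(0,\cdot)\|_6$.

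The polynomial lower bound \eqref{wgrowth2} on $\mathcal{W}$ and the lower bound \eqref{bcond} on $b$ (extended to all $z\in\mathbb R$ using that $b$ is odd, hence $B$ is even in $z$) give
\begin{gather*}
    \int_{-\pi/2}^{\pi/2}\left[\tfrac{1}{2}u_y^2+\tfrac{c_1}{2}u_y^4+\tfrac{c_2}{2}u_y^6+\tfrac{\lambda-1}{2}u^2+\tfrac{b_1}{4}u^4\right]dy\le 0
\end{gather*}
at $x=0$. Since $c_1<0$ and the compatibility $c_1^2<\tfrac{5}{3}c_2$ forces $c_2>0$, while $b_1\le 0$, the signs are arranged so that $u_y^6$ and $u_y^2$ sit on the left and $u_y^4$ and $u^4$ on the right. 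The key move is to apply the sharp Poincar\'e inequality on $(-\pi/2,\pi/2)$ with zero boundary values, whose best constant is $1$ (the first Dirichlet eigenfunction being $\cos y$), to absorb $\int\tfrac{1-\lambda}{2}u^2$ into $\int\tfrac{1}{2}u_y^2$; because $\lambda>0$, this produces the strictly positive residual $\int\tfrac{\lambda}{2}u_y^2\ge 0$ on the left. Then Young's inequality $u_y^4\le \varepsilon u_y^6+C_\varepsilon$ absorbs the $|c_1|u_y^4$ term into $c_2 u_y^6$. What remains is
\begin{gather*}
    \tfrac{c_2}{4}\int u_y^6\,dy \le C + \tfrac{|b_1|}{4}\int u^4\,dy.
\end{gather*}
Using the 1D Sobolev bound $|u(0,\cdot)|_0\le C\|u_y(0,\cdot)\|_{L^1}\le C\|u_y(0,\cdot)\|_6$ coming from the zero boundary values, together with H\"older for $\int u^2\le C\|u_y\|_6^2$, I get $\int u^4\le C\,B^{2/3}$ where $B:=\int u_y^6\,dy$. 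A final application of Young reduces $cB\le C+C'B^{2/3}$ to an a priori bound $B\le C(c_1,c_2,b_1)$.

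Once $\|u_y(0,\cdot)\|_6\le C$ is in hand, H\"older returns $\|u_y(0,\cdot)\|_2\le C$, Poincar\'e gives $\|u(0,\cdot)\|_2\le C$, and the 1D Morrey embedding $W^{1,2}(-\pi/2,\pi/2)\hookrightarrow C^{0,1/2}$ produces $|u(0,\cdot)|_{1/2}\le C$. For arbitrary $x_0$, I would invoke the global nodal properties of \thref{globalnodal} (under which the lemma is applied in the sequel): the sign condition $u_y<0$ on $\Omega_+$ combined with $u=0$ on $\partial\Omega$ upgrades to $u\ge 0$ on $\overline{\Omega}$, and $u_x\le 0$ on $\Omega^+$ together with evenness in $x$ gives $0\le u(x_0,y)\le u(0,y)$ pointwise. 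The bounds on $\|u(x_0,\cdot)\|_2$ and $|u(x_0,0)|_0$ then follow immediately from the ones at $x=0$.

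The main obstacle is the sharpness required in the Poincar\'e step: only the precise constant $1$ makes the $\frac{1-\lambda}{2}\int u^2$ term absorbable uniformly in $\lambda>0$; a weaker constant would leave a $\lambda$-dependent residual and destroy the uniformity of the estimate in $\lambda$. A secondary subtlety is that $c_2>0$ is nowhere assumed directly — it must be deduced from $c_1<0$ together with $c_1^2<\tfrac{5}{3}c_2$, which is itself a consequence of the ellipticity condition \eqref{Wcond} applied near $q=0$.
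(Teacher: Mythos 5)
Your proof is correct and follows essentially the same strategy as the paper: evaluate the conserved quantity $\mathcal{H}=0$ at $x=0$, use the growth conditions \eqref{wgrowth2} and \eqref{bcond} to get a polynomial inequality, absorb the $(\lambda-1)\|u\|_2^2$ term via the sharp Wirtinger/Poincar\'e inequality, and then bound $\|u_y(0,\cdot)\|_6$ by interpolation; the ``moreover'' clause then follows from the monotonicity in \eqref{nodalprop}. The only real divergence is in the interpolation step — the paper combines the scaling-exact estimates $\|u\|_4\le\pi\|u_y\|_4$ (Friedrichs) and $\|u_y\|_4^4\le\pi^{1/3}\|u_y\|_6^4$ (H\"older) so that the remaining terms factor cleanly, producing the explicit bound $\|u_y\|_6^2\le|c_1+\tfrac{b_1\pi}{2}|\pi^{1/3}/c_2$, whereas you use a Young-plus-Sobolev scheme that yields an implicit inequality $cB\le C+C'B^{2/3}$ in $B=\|u_y\|_6^6$; both close, yours a bit less explicitly but slightly more flexibly. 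Your side remark that $c_2>0$ must be deduced from $c_1^2<\tfrac{5}{3}c_2$ is a correct and worthwhile observation that the paper only uses implicitly when dividing by $c_2$.
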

\begin{proof}
From \eqref{hamiltonian}, \thref{Hconst}, \eqref{wgrowth2} and \eqref{bcond} we see that when $x=0$
\begin{align*}
    0=2\mathcal{H} = 2\int_{-\frac{\pi}{2}}^{\frac{\pi}{2}}\mathcal{L}\, dy \geq& \|u_{y}(0,\cdot)\|_{2}^{2}+c_{1}\|u_{y}(0,\cdot)\|_{4}^{4}+c_{2}\|u_{y}(0,\cdot)\|_{6}^{6}\\+&(\lambda- 1)\|u(0,\cdot)\|_{2}^{2}+\frac{b_{1}}{2}\|u(0,\cdot)\|_{4}^{4},
\end{align*}
For the remainder of the proof, we will suppress arguments of $u(0,\cdot)$ and $u_{y}(0,\cdot)$ appearing in $L^{p}[-\frac{\pi}{2},\frac{\pi}{2}]$ norms. Wirtinger's inequality implies that $\|u_{y}\|_{2}^{2}-\|u\|_{2}^{2} \geq 0$, and $\|u\|_{4} \leq \pi\|u_{y}\|_{4}$ by Friedrichs's inequality, so that 
\begin{gather}
\label{in1}
    c_{1}\|u_{y}\|_{4}^{4}+c_{2}\|u_{y}\|_{6}^{6}+\frac{b_{1}\pi}{2}\|u_{y}\|_{4}^{4}\leq -\lambda\|u\|_{2}^{2} +(\|u\|_{2}^{2}-\|u_{y}\|_{2}^{2})\leq 0.
\end{gather}
H\"older's inequality yields 
\begin{gather*}
\label{in2}
    \|u_{y}\|^{4}_{4} \leq \pi^{\frac{1}{3}}\|u_{y}\|_{6}^{4}.
\end{gather*}
Altogether these give 
\begin{gather} \label{upperlambda}
    (c_{1}+\frac{b_{1}\pi}{2})\pi^{\frac{1}{3}}\|u_{y}\|_{6}^{4}+c_{2}\|u_{y}\|_{6}^{6} \leq 0 
\end{gather}
so that 
\begin{gather*}
    \|u_{y}\|^{2}_{6} \leq \dfrac{|c_{1}+\frac{b_{1}}{2}\pi|\pi^{\frac{1}{3}}}{c_{2}}.
\end{gather*}
Thus, $\|u_{y}\|_{6}$ is uniformly bounded. An application of H\"older's inequality shows that $\|u_{y}\|_{2}$ is uniformly bounded too. As mentioned above, $\|u\|_{2} \leq \|u_{y}\|_{2}$. Hence, we obtain a uniform bound on $|u(0,\cdot)|_{1/2}$ by Sobolev embedding. Because of the monotonicity of $u$ established in \eqref{nodalprop}, we see that $\|u(x_{0},\cdot)\|_{2}$ and $|u(x_{0},\cdot)|_{0}$ are maximized at $x_{0}=0$. Thus, the $L^{2}$ and $L^{\infty}$ norms of $u$ are uniformly bounded on any transversal line in $\Omega$.   
\end{proof}

\begin{remark}\thlabel{bounds_u_II}
Note that this says nothing about solutions of Model II. If $u \in \mathcal{C}^{II}_{\delta}$, with $\delta>0$, then $|u|<C$, where $C$ depends on $q_{1}$. This is a direct consequence of \eqref{wnondegen}, \eqref{wdegen}, and the homogeneous Dirichlet conditions ($|\nabla u|^{2}<q_{2}$ along $\mathcal{C}_{\delta}^{II}$).
\end{remark} 

\begin{remark}
More generally, if $\mathcal{H}=M$, then the above argument shows that $\|u_{y}(0,\cdot)\|_{6}$ is bounded uniformly by a constant $C$ that depends on $c_{1},c_{2},b_{1}$ and $M$, so long as one assumes sufficient growth of $\mathcal{W}$ relative to $b$.
\end{remark}
At this stage, we are left to establish control on $|\lambda(s)|$ to complete our desired estimates of $|u(s)|_{3+\alpha}$ 

\subsection{Uniform regularity}
We begin by using the so called ``$P$-function" technique (see \cite{sperb}) along with standard elliptic estimates to gain some control on $|u(s)|_{3+\alpha}$.  
\begin{lemma}
\thlabel{sperbtype}
Let $(u,\lambda) \in \mathcal{O}_{\delta}\cap \,\mathcal{F}^{-1}(0)$, for some $\delta>0$. If $\lambda$ and $K$ are positive, and $|u|_{0}+\lambda<K$, then there is a constant $C(K,\delta)>0$ for which $|u|_{3+\alpha} \leq C(K,\delta)$. If $(u,\lambda)$ is a solution which corresponds to Model I, then the above estimate holds for some $C = C(K)$. 
\end{lemma}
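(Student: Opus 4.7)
The plan is to prove the estimate in two main stages: (i) derive a uniform gradient bound $|\nabla u|_0\le C(K,\delta)$ via a \emph{P}-function argument, then (ii) bootstrap to $C^{3+\alpha}$ via linear Schauder theory applied to the differentiated equation.

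\textbf{Step 1 (interior gradient bound via a P-function).} Following the general strategy laid out in Sperb \cite{sperb} for equations of the form $\nabla\cdot(\mathcal{W}'(|\nabla u|^2)\nabla u)=b(u,\lambda)$, I will introduce
\begin{gather*}
P(x,y) := \Phi(|\nabla u(x,y)|^2) - 2B(u(x,y),\lambda),\qquad \Phi(q):=\int_0^q \mathcal{W}'(s)\,ds,
\end{gather*}
and compute the linearized quasilinear operator acting on $P$. A direct (if tedious) calculation of the type standard for such P-functions shows that the elliptic part takes a non-negative form modulo a first-order term that still respects a maximum principle, with the key quantity $\mathcal{W}'(q)+2q\mathcal{W}''(q)$ controlling nondegeneracy. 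Under the hypothesis $(u,\lambda)\in\mathcal{O}_\delta$ this quantity is bounded below by $\delta$, so the standard maximum principle (the elliptic version in \thref{max_pricp}) implies that $P$ attains its supremum either at infinity or on $\partial\Omega$. Since $|u|_0\le K$ gives a uniform bound on $B(u,\lambda)$, an interior sup-bound on $P$ translates directly into an interior sup-bound $|\nabla u|_0\le C(K,\delta)$; for Model I, \eqref{Wcond} replaces $\delta$ by the fixed $\xi_1$, so the constant depends only on $K$.

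\textbf{Step 2 (boundary gradient bound).} Because $u\equiv 0$ on the two horizontal boundary lines $y=\pm\pi/2$, the tangential derivative $u_x$ vanishes there and $|\nabla u|^2=u_y^2$ on $\partial\Omega$. The value of $P$ on $\partial\Omega$ is therefore determined by $u_y$ alone. To bound $u_y$ on $\partial\Omega$ I will construct explicit one-dimensional barriers $\overline u(y)$ on thin strips $\mathbb{R}\times(\pi/2-\eta,\pi/2)$ (and symmetrically near $y=-\pi/2$) of the form $\overline u(y)=Kh(\pi/2-y)$ with $h$ chosen so that $\overline u\ge u$ on the parabolic boundary of the strip and $\mathcal{F}(\overline u,\lambda)\le 0$ there; the uniform ellipticity (from $\delta$ or from $\xi_1$) together with the $K$-bound on $b(u,\lambda)$ make this construction routine. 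Comparison (\thref{max_pricp}) then yields $|u_y|\le C(K,\delta)$ on $\partial\Omega$, closing the loop from Step 1.

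\textbf{Step 3 (Schauder bootstrap).} Once $|\nabla u|_0\le C(K,\delta)$ is in hand the equation \eqref{cub} is uniformly elliptic in the classical sense, with smooth coefficients of the principal part depending on $\nabla u$ and a bounded forcing $b(u,\lambda)$. Standard De Giorgi--Nash--Moser theory for quasilinear equations yields a $C^{1+\alpha}$ bound $|u|_{1+\alpha}\le C(K,\delta)$. Differentiating \eqref{cub} once and treating it as a linear elliptic equation for $u_x$ (respectively $u_y$) with $C^{\alpha}$ coefficients, Schauder estimates upgrade this to $|u|_{2+\alpha}\le C(K,\delta)$; one more round promotes this to $|u|_{3+\alpha}\le C(K,\delta)$. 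For Model I, all constants may be tracked to depend only on $K$ because $\xi_1$ is fixed by the material and does not enter as an independent parameter.

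\textbf{Main obstacle.} The substantive step is Step 1: choosing the P-function and verifying that the differential inequality it satisfies is compatible with a maximum principle in our quasilinear setting. The computation is sensitive to the interaction between $\mathcal{W}'(q)+2q\mathcal{W}''(q)$ and the lower-order term $b(u,\lambda)$; making sure the sign of the remainder terms does not destroy the maximum principle is where the argument must be executed carefully. The boundary-barrier construction in Step 2 is standard once uniform ellipticity is granted, and the Schauder bootstrap in Step 3 is entirely routine.
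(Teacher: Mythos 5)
Your overall strategy (P-function maximum principle to get a gradient bound, then Schauder bootstrap to $C^{3+\alpha}$) matches the paper's, but there is a genuine gap in Step 1. The P-function you write down, $P=\Phi(|\nabla u|^2)-2B(u,\lambda)$ with $\Phi(q)=\int_0^q\mathcal{W}'(s)\,ds=\mathcal{W}(q)$, is the right one for the semilinear equation $\Delta u=b$, but it does not satisfy the required differential inequality for the quasilinear operator $\nabla\cdot(\mathcal{W}'(q)\nabla u)$. The principal part of this operator, acting on a scalar, has eigenvalue $\mathcal{W}'+2q\mathcal{W}''$ in the $\nabla u$ direction and $\mathcal{W}'$ transversally; the Payne--Philippin construction absorbs this anisotropy by using $\Phi(q)=\int_0^q\bigl(\mathcal{W}'(s)+2s\mathcal{W}''(s)\bigr)\,ds=2q\mathcal{W}'(q)-\mathcal{W}(q)$, which is exactly what the paper invokes via Theorem~1 of \cite{Payne}. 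With your $\Phi$ the "direct if tedious calculation" you allude to will not produce a sign-definite remainder, so the maximum principle for $P$ fails; you have correctly identified this as the main obstacle but not resolved it, and as written the proposal is incomplete at precisely the load-bearing step.

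Two smaller points of divergence. First, your boundary treatment (Step 2, explicit one-dimensional barriers to bound $u_y$ on $y=\pm\pi/2$) is a reasonable elementary route, but the paper avoids it entirely: Payne's theorem already rules out a boundary maximum of $P$ because $\Omega$ is convex (modulo the limit $x\to\pm\infty$, which the decay of $u$ handles), and the monotonicity \eqref{nodalprop} then identifies $(0,0)$ as the unique interior critical point, so the bound $P\le P(0,0)\le C(K)$ is immediate. Second, you overlook the shortcut that for Model~II the gradient bound is free: $(u,\lambda)\in\mathcal{O}_\delta$ together with \eqref{wnondegen}--\eqref{wdegen} forces $|\nabla u|^2<q_1$ pointwise (cf.\ \thref{bounds_u_II}), so the P-function argument is only needed for Model~I. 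Your Step 3 (De Giorgi--Nash to get $C^{1+\alpha}$, then iterated Schauder on the differentiated equation to reach $C^{3+\alpha}$) is essentially identical to the paper's bootstrap via \cite[Theorem~8.29]{gilbarg2015elliptic}.

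Also note a minor imprecision: you say the maximum principle forces $P$ to attain its supremum "either at infinity or on $\partial\Omega$," but the P-function principle in fact allows maxima at interior critical points of $u$ as well (where the operator for $P$ degenerates); that case is harmless because there $|\nabla u|^2=0$ and $P$ is controlled by $|u|_0\le K$, but it should be stated.
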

\begin{proof}
We prove this result by using a maximum principle of Payne and Philippin. First, we obtain bounds on $|\nabla u(s) |^{2}$. Recall, as mentioned in \thref{bounds_u_II} that this is trivial for Model II, so let us assume for now the conditions of Model I. By Theorem 1 of \cite{Payne} the function 
\begin{gather*}
    P(x,y)=\int_0^{|\nabla u(x,y)|^{2}}(\mathcal{W}'(\xi)+2\xi\mathcal{W}''(\xi))d\xi - 2\int_{0}^{u(x,y)}b(\eta,\lambda)\eta \,d\eta,
\end{gather*}
obtains its maximum either on $\partial \Omega$ or at a critical point of $u$. We should note that in \cite{Payne} the results are stated for bounded $C^{2+\alpha}$ domains. So, our application includes the additional possibility that the maximum of $P$ occurs in the limit as $x \to \pm \infty$. However, the decay of $u$ precludes this scenario for nontrivial solutions. The homogeneous Dirichlet boundary conditions of \eqref{cub} and monotonicity properties of \eqref{nodalprop} now imply that $P$ is maximized at $(0,0)$, which is the only critical point of $u$. Thus,  
\begin{gather*}
    (2q\mathcal{W}'(q)-\mathcal{W}(q))\big\vert_{q=|\nabla u(x,y)|^{2}}-2\int_{0}^{u(x,y)}b(\eta,\lambda)\eta \,d\eta \leq -2\int_{0}^{u(0,0)}b(\eta,\lambda)\eta \, d\eta.
\end{gather*}
So, 
\begin{gather} \label{gradest}
    2q\mathcal{W}'(q)-\mathcal{W}(q) \leq -2 \int_{u(x,y)}^{u(0,0)}b(\eta,\lambda)\eta \; d\eta \leq 2(u(0,0))^{2} \max\limits_{(x,y) \in \overline{\Omega}}{|b(u(x,y),\lambda)|}.
\end{gather}
Since $b$ is analytic in both $z$ and $\lambda$, it follows that the right hand side of \eqref{gradest} is bounded by $C(K)$. Moreover, the left hand side of \eqref{gradest} satisfies 
\begin{gather*}
    2q\mathcal{W}'(q)-\mathcal{W}(q) = \int_{0}^{q}\left(\mathcal{W}'+2q\mathcal{W}''(q)\right)\;dq \geq q\xi_{1},
\end{gather*}
whenever $q\geq 0$, by \eqref{Wcond}. Hence, 
\begin{gather}\label{grad_sup}
    |\nabla u|_{0}^{2} \leq \dfrac{2(u(0,0))^{2}}{\xi_{1}} \max\limits_{(x,y) \in \overline{\Omega}}{|b(u(x,y),\lambda)|}.
\end{gather}

Standard elliptic theory can now be invoked to upgrade a uniform bound in $|\nabla u|$ into a uniform bound in $C^{3+\alpha}(\overline{\Omega})$. As we have seen in \eqref{quasilinearize}, $\partial_{x}u$ solves a divergence form elliptic equation. In particular, from \eqref{gradest} it follows that we may view $\partial_{x}u$ as the solution to a linear PDE with uniformly bounded coefficients. An application of \cite[Theorem 8.29]{gilbarg2015elliptic} yields that for some $\alpha' \in (0,\alpha] $
\begin{gather*}
    |u_{x}|_{C^{\alpha'}(\Omega_{M})} \leq C 
\end{gather*}
where $\Omega_{M} = \Omega\cap \{(x,y) \; : \; M\leq x \leq M+1$\}, and both $\alpha'$ and $C$ depend on $K$ and $\delta$ (or only $K$ in the case of Model I). An analogous bound for $|u_{y}|_{C^{\alpha'}(\Omega_{M})}$ is obtained by differentiating in $y$ instead. Now, by viewing \eqref{cub} as a linear equation with coefficients that depend on $u_{x}$ and $u_{y}$, which we have just shown are uniformly bounded in $C^{\alpha'}(\Omega_{M})$, we may apply (linear) Schauder theory to obtain a uniform bound on $|u|_{C^{2+\alpha'}(\Omega_{M})}$. This gives control over $|u|_{C^{1+\alpha}(\Omega_{M})}$, so that by repeating the previous argument we gain control of $|u|_{C^{2+\alpha}(\Omega_{M})}$. Now, Schauder estimates applied to the linearized equations for either $u_{x}$ or $u_{y}$ provide a uniform bound on $|u|_{C^{3+\alpha}(\Omega_{M})}$. 
\end{proof}

\subsection{Bounds on loading parameter}
\label{boundsonlambda_sub}
Now, we show that as we follow either $\mathcal{C}^{I}$ or $\mathcal{C}_{\delta}^{II}$, for some $\delta>0$, that $\lambda(s)$ cannot return to $0$ without the corresponding solutions returning to the reference configuration or the equation undergoing a loss of ellipticity. Moreover, an upper bound on $\lambda(s)$ is derived for either case. These estimates will be used to establish bounds on $|u(s)|_{3+\alpha}$ and \eqref{blowup}. 

\begin{lemma}
\thlabel{boundsL}
If $\{(u_{n}, \lambda_{n})\} \subset \mathcal{C}^{I}$, or $\{(u_{n}, \lambda_{n})\} \subset\mathcal{C}^{II}_{\delta}$, for some $\delta>0$, is sequence of solutions to \eqref{cub}, that are uniformly bounded in $C_{\textup{b}}^{3+\alpha}(\overline{\Omega})$ and for which $\lambda_{n} \to 0$, then $u_{n}\to 0$ in $X$.  
\end{lemma}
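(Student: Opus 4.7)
I would argue by contradiction, supposing that a sequence $\{(u_{n},\lambda_{n})\}$ as in the statement exists with $u_{n}\not\to 0$ in $X$. The first reduction is the elementary observation that, by the monotonicity property in \thref{globalnodal}, $|u_{n}|_{0}=u_{n}(0,0)$; consequently, if $u_{n}(0,0)\to 0$, then the Payne--Philippin bound \eqref{grad_sup} from the proof of \thref{sperbtype} (valid on $\mathcal{C}^{I}$ via \eqref{Wcond}, and on $\mathcal{C}^{II}_{\delta}$ via the ellipticity floor $\delta$) yields $|\nabla u_{n}|_{0}\to 0$, and the Schauder bootstrap there delivers $|u_{n}|_{3+\alpha}\to 0$; combined with the decay properties of elements of $X$ this would give $u_{n}\to 0$ in $X$, contradicting the hypothesis. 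After extracting a subsequence we may therefore assume $u_{n}(0,0)\ge\eta>0$ for some fixed $\eta$.

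Using the uniform $C^{3+\alpha}_{\textup{b}}$ bound and Arzelà--Ascoli, I would next extract a further subsequence converging in $C^{3}_{\textup{loc}}(\overline{\Omega})$ to some $u_{\infty}\in C^{3+\alpha}_{\textup{b}}(\overline{\Omega})$. Continuity of $\mathcal{F}$ gives $\mathcal{F}(u_{\infty},0)=0$, $u_{\infty}(0,0)\ge\eta$, and by \thref{closedprop} the nodal properties \eqref{nodalprop} persist in the limit. Passing to the limit in \thref{Hconst} also yields $\mathcal{H}(u_{\infty},0;x)\equiv 0$. Since each $u_{n}\in C^{2}_{0}$ satisfies $u_{n}(x,y)\to 0$ uniformly in $y$ as $|x|\to\infty$, I would then apply \thref{CorF} with $U\equiv 0$ to dichotomize: either (a) a subsequence converges in $C^{3+\alpha}_{\textup{b}}(\overline{\Omega})$ to a non-trivial $u\in X$ with $\mathcal{F}(u,0)=0$; or (b) there are translations $x_{n}\to\infty$ such that $\tilde{u}_{n}:=u_{n}(\cdot+x_{n},\cdot)$ converges in $C^{3}_{\textup{loc}}$ to a non-trivial $\tilde{u}\in C^{3+\alpha}_{\textup{b}}$ with $\tilde{u}_{x}\le 0$ and $\mathcal{F}(\tilde{u},0)=0$.

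Both alternatives have to be eliminated. For (a), I would combine $\mathcal{H}(u,0;0)=0$ (where $u_{x}(0,\cdot)=0$ by evenness) with the structural bounds \eqref{wgrowth2}, \eqref{bcond} and a sharpened Wirtinger inequality exploiting the spectral gap between the first two Dirichlet eigenvalues of $-\partial_{y}^{2}$ on $(-\pi/2,\pi/2)$: writing $u(0,\cdot)=a\cos y+w$ with $w\perp\cos y$ in $L^{2}$, this gives quantitative smallness of $(a,w)$ which, through the Payne--Philippin bound \eqref{grad_sup} and Schauder theory, upgrades to smallness of $|u|_{3+\alpha}$. Once $|u|_{3+\alpha}$ lies below the threshold $\delta_{0}$ of \thref{Positive}, local uniqueness places $u$ on $\mathcal{C}^{I,II}_{\textup{loc}}$; but that curve sits in $\{\lambda>0\}$, forcing $u\equiv 0$ and contradicting $u(0,0)\ge\eta$. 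For (b), the monotonicity $\tilde{u}_{x}\le 0$ plus the uniform decay of each $u_{n}$ at $+\infty$ combined with $x_{n}\to\infty$ force $\tilde{u}(x,y)\to 0$ as $x\to+\infty$ and $\tilde{u}(x,y)\to\tilde{U}^{-}(y)\not\equiv 0$ as $x\to-\infty$, so $\tilde{U}^{-}$ is a non-trivial $x$-independent solution of \eqref{cub} at $\lambda=0$. Applied in the $x$-independent setting, the reduced ODE of \thref{existenceuniqueness} at $\lambda=0$ reads $v''=\frac{3(b_{1}+2c_{1})}{4}v^{3}+r(v,v',0)$, whose only equilibrium near the origin is $v=0$ when $b_{1}+2c_{1}<0$; the same $\mathcal{H}$--Wirtinger argument confines $\tilde{U}^{-}$ to that neighborhood, yielding $\tilde{U}^{-}\equiv 0$ and a contradiction.

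The principal obstacle is the quantitative step in alternative (a): converting the conserved quantity identity $\mathcal{H}(u,0;0)=0$ into a smallness estimate on $|u|_{3+\alpha}$ tight enough to trigger the local uniqueness. The sign assumptions $c_{1},b_{1}\le 0$ and $b_{1}+2c_{1}<0$ appear essential here, as they ensure the reduced phase-plane system at $\lambda=0$ has no non-trivial bounded orbits to perturb from, and therefore that the only way the conserved quantity identity can be satisfied at $\lambda=0$ is with the trivial solution.
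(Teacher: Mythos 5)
Your plan correctly reduces the problem via \thref{CorF} to analyzing a nontrivial limit solution at $\lambda=0$ (the paper does the same), but the method you propose for eliminating both branches has a genuine gap. The claimed ``quantitative smallness'' from $\mathcal{H}(u,0;0)=0$ does not follow: at $\lambda=0$, the conserved quantity plus (sharpened) Wirtinger/Friedrichs gives exactly the same \emph{upper bound} $\|u_y(0,\cdot)\|_{6}\le C(c_1,c_2,b_1)$ that \thref{top} produces for generic $\lambda$, not smallness. Concretely, if you write $u(0,y)=a\cos y+w$ with $w\perp\cos y$, the spectral gap only controls $\|w\|_2$; the identity $0\ge \tfrac{3}{2}\|w\|_2^2 + \tfrac{c_1}{2}\|u_y\|_4^4 + \tfrac{c_2}{2}\|u_y\|_6^6 + \tfrac{b_1}{4}\|u\|_4^4$ is perfectly compatible with $a$ of order one, since the negative quartic terms (with $c_1,b_1<0$) can balance the positive sextic term at a finite amplitude. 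So you cannot invoke the $\delta_0$ threshold of \thref{Positive}. For the same reason the step in case (b) where you ``confine $\tilde U^-$ to that neighborhood'' and then apply the reduced ODE of \thref{existenceuniqueness} fails: the center-manifold reduction is only valid for small solutions, and you have not established smallness of $\tilde U^-$.

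The paper's actual argument is more economical and bypasses the conserved quantity entirely. Having extracted the limit (which in either branch is a monotone solution at $\lambda=0$ decaying as $x\to+\infty$), it multiplies the PDE by $\varphi(y)=\cos y$ and integrates only over a tail region $(R_1,R_2)\times(-\tfrac{\pi}{2},\tfrac{\pi}{2})$ with $R_1$ large, where $|\nabla u|$ is already small by decay, so that $\tfrac12<\mathcal{W}'<1$ and $-b(u,0)-u=-b_1u^3+O(u^5)\ge 0$ pointwise. Integrating by parts, using $u_y\varphi_y>0$ (from \eqref{nodalprop}), and noting that the leading-order pieces $-u_y\varphi_y+(1-\lambda)u\varphi$ integrate in $y$ to $-\lambda\int u\varphi\,dy$ (hence vanish at $\lambda=0$), one is left with a sum of three nonnegative quantities that must equal zero, while the boundary term $-\int\mathcal{W}'u_x\varphi(R_1,y)\,dy$ is strictly positive because $u_x<0$ for $x>0$. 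This yields a contradiction that applies verbatim to both limits from \thref{CorF}, so no case split is needed. I would also note that your preliminary reduction to $u_n(0,0)\ge\eta>0$ is correct but not necessary---the nontriviality of the limit comes for free from \thref{CorF} under the hypothesis $u_n\not\to 0$.
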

\begin{proof}
Assume that $\{u_{n}\}$ does not converge to $0$ in $C_{\text{b}}^{3+\alpha}(\overline{\Omega})$. By the hypothesis and \eqref{nodalprop}, we may then invoke \thref{CorF}. From this, one may conclude that either there is a subsequence converging in $C^{3+\alpha}_{\text{b}}(\overline{\Omega})$ to a solution $(u,0) \in X\times \mathbb{R}$ of \eqref{cub}, or there is a subsequence  of translates
\begin{gather*}
    u_{n}(\cdot +x_{n}, \cdot) \xrightarrow{C^{2}_{\text{b}}(\overline{\Omega})}\tilde{u}(\cdot, \cdot)
\end{gather*}
where $x_{n} \to \infty$, and $(\tilde{u},0) \in C^{3+\alpha}_{\text{b}}(\overline{\Omega})\times \mathbb{R}$ solves \eqref{cub}. Moreover, $\tilde{u}_{x},\tilde{u}_{y} \to 0$ uniformly in $x,y$ as $x\to \infty$, and $\tilde{u}_{x} \leq 0 $. From \eqref{nodalprop} we also know that $\tilde{u}_{y} \geq0$ for $y \in [-\frac{\pi}{2},0)$ and $\tilde{u}_{y} \leq 0$ for $y \in (0,\frac{\pi}{2}]$. An application of the strong maximum principle for signed solutions (see \hyperref[strong_max]{\thref{max_pricp}.\ref{strong_max}}) shows that the inequalities for $\tilde{u}_{x}$ and $\tilde{u}_{y}$ must in fact be strict in the interior of $\Omega$. Throughout the rest of the proof, the properties of $u$ and $\tilde{u}$ that concern us are the same, namely that they are monotone bounded solutions to \eqref{cub} that decay as $x \to \infty$. For simplicity, let us only write $u$ in the sequel, with the understanding that the argument applies equally well to $\tilde{u}$. 

Let $0<R_{1}<R_{2}$ and $\varphi(y)=\cos(y)$. Multiplying \eqref{cub} by $\varphi$ and then integrating we find 
\begin{align*}
    0&=\int_{-\frac{\pi}{2}}^{\frac{\pi}{2}}\int_{R_{1}}^{R_{2}}(\nabla \cdot (\mathcal{W}'(|\nabla u|^{2})\nabla u)-b(u,\lambda))\varphi\,dxdy \\
    &=\int_{-\frac{\pi}{2}}^{\frac{\pi}{2}}\mathcal{W}'(|\nabla u|^{2})u_{x}\varphi\big\vert_{R_{1}}^{R_{2}}\;dy+\int_{-\frac{\pi}{2}}^{\frac{\pi}{2}}\int_{R_{1}}^{R_{2}}\left(-\mathcal{W}'(|\nabla u|^2)u_{y}\varphi_{y}-b(u,\lambda)\varphi\right)\,dxdy
\end{align*}
Note that $u_y\varphi_y > 0$ by the comments above. For $R_{1}$ sufficiently large, $$\frac{1}{2} <\mathcal{W}'(|\nabla u|^{2}) < 1 \;\;\;\text{in}\;\;\; (R_{1},\infty)\times(-\frac{\pi}{2},\frac{\pi}{2})$$ because of the decay in $u_{x}$ and $u_{y}$. Recall that $b$ has the form \eqref{b} when $|\nabla u|^{2}$ is sufficiently small. So for large enough $R_{1}$ we also have $-b(u,\lambda)-(1-\lambda)u\geq0$ whenever $x>R_{1}$ and $-\frac{\pi}{2} < y < \frac{\pi}{2}$. Letting $R_{2} \to \infty$ we see that 
\begin{gather*}
\label{contra}
    0=\int_{-\frac{\pi}{2}}^{\frac{\pi}{2}}\int_{R_{1}}^{\infty}(-\mathcal{W}'(|\nabla u|^{2})+1)u_{y}\varphi_{y}+(-b(u,\lambda)-(1-\lambda)u)\varphi \,dxdy \\-\int_{-\frac{\pi}{2}}^{\frac{\pi}{2}}\mathcal{W}'(|\nabla u|^{2})u_{x}\varphi(R_{1},y)\,dy>0.
\end{gather*}
This is of course a contradiction. 
\end{proof} 

\begin{lemma}[Bounds on $\lambda$]
\thlabel{boundsonlambda}
Let $(u,\lambda) \in \mathcal{C}^{I} \setminus\mathcal{C}^{I}_{\textup{loc}}$. Then there exists some positive constants $\lambda_{1}^{\pm}=\lambda^{\pm}(c_{1},c_{2},b_{1})$ for which $0< \lambda^{-} < \lambda<\lambda^{+} < \infty$. If instead we take $(u,\lambda) \in   \mathcal{C}^{II}_{\delta}\setminus\mathcal{C}_{\textup{loc}}$, for some $\delta>0$, then the result still holds with positive constants $\lambda_{2}^{\pm}$, except that $\lambda^{-}$ now depends on $\delta$ as well. 
\end{lemma}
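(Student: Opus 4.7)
The plan is to establish the upper bound through a weighted integral identity tested against the principal cross-sectional eigenfunction $\cos y$, and the lower bound by combining \thref{boundsL} with the local uniqueness from \thref{existenceuniqueness} and \thref{Positive}.

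\textbf{Upper bound.} First I would control $|u|_{0}$: for Model I this is \thref{top}, while for Model II the ellipticity conditions \eqref{wnondegen}--\eqref{wdegen} force $|\nabla u|^{2}<q_{1}$ pointwise on all of $\mathcal{C}^{II}$, so integrating in $y$ from the Dirichlet boundary yields $|u|_{0}\leq \pi\sqrt{q_{1}}$, independently of $\delta$. Next, multiply \eqref{cub} by $\cos y$ and integrate over $(-R,R)\times(-\frac{\pi}{2},\frac{\pi}{2})$, integrating by parts. The $y=\pm\frac{\pi}{2}$ boundary terms vanish since $\cos(\pm\frac{\pi}{2})=0$, and the $x=\pm R$ contributions combine by evenness of $u$ in $x$ into a single term that is non-positive, because $u_{x}(R,y)\leq 0$ by \thref{globalnodal}. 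Using that $u_{y}\sin y\leq 0$ throughout $\Omega$ (also by \thref{globalnodal}), one obtains
\begin{gather*}
\int_{-R}^{R}\int_{-\pi/2}^{\pi/2} b(u,\lambda)\cos y\, dy\, dx \leq 0.
\end{gather*}
But \eqref{bcond} yields $b(u,\lambda)\geq u(\lambda-1+b_{1}u^{2})$, so whenever $\lambda>1+|b_{1}|\,|u|_{0}^{2}$ this integrand is strictly positive in $\Omega$ (using positivity of $u$ from the nodal properties), producing a contradiction. Hence $\lambda\leq \lambda^{+}:=1+|b_{1}|\,|u|_{0}^{2}$, with $\lambda_{1}^{+}$ depending only on $(c_{1},c_{2},b_{1})$ and $\lambda_{2}^{+}$ depending only on $(q_{1},b_{1})$.

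\textbf{Lower bound.} Suppose toward a contradiction that some sequence $\{(u_{n},\lambda_{n})\}$ in $\mathcal{C}^{I}\setminus\mathcal{C}_{\textup{loc}}$ (respectively $\mathcal{C}^{II}_{\delta}\setminus\mathcal{C}_{\textup{loc}}$) has $\lambda_{n}\to 0$. The upper bound combined with the a priori control of $|u_{n}|_{0}$ keeps $|u_{n}|_{0}+\lambda_{n}$ uniformly bounded, so \thref{sperbtype} supplies $|u_{n}|_{3+\alpha}\leq C$, with $C=C(c_{1},c_{2},b_{1})$ for Model I and $C=C(\delta,q_{1},b_{1})$ for Model II. Each $u_{n}$ obeys the nodal structure \eqref{nodalprop} and decays in $x$, so \thref{boundsL} forces $u_{n}\to 0$ in $X$. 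Combined with $\lambda_{n}\to 0$ this makes $|u_{n}|_{3+\alpha}+\lambda_{n}<\delta_{0}$ for large $n$, so \thref{existenceuniqueness} together with the uniqueness portion of \thref{Positive} places $(u_{n},\lambda_{n})\in \mathcal{C}^{I,II}_{\textup{loc}}$ for large $n$, contradicting the assumption.

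\textbf{Main obstacle.} The delicate point is avoiding $\delta$-dependence in $\lambda_{2}^{+}$: a naive appeal to \thref{sperbtype} would pick up $\delta$ through the Schauder estimate, but the ellipticity constraint, applied pointwise across all of $\mathcal{C}^{II}$ (not merely $\mathcal{C}^{II}_{\delta}$), already forces $|\nabla u|^{2}<q_{1}$ and so yields a $\delta$-independent $|u|_{0}$ bound via the Dirichlet condition. The lower bound $\lambda_{2}^{-}$, by contrast, must retain $\delta$-dependence because \thref{sperbtype} for Model II does, consistent with the statement of the lemma; all other steps are essentially the same between the two models.
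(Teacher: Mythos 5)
Your lower-bound argument is essentially the paper's: uniform $C^{3+\alpha}$ control via \thref{sperbtype} and \thref{top} (or \thref{bounds_u_II}), then \thref{boundsL} forces $u_n\to 0$, and \thref{Positive} places the tail of the sequence back in $\mathcal{C}_{\textup{loc}}^{I,II}$, a contradiction. (The paper phrases the contradiction through \hyperref[notinloc]{\thref{global}.\ref{notinloc}}; your formulation—that it directly contradicts the standing hypothesis $(u_n,\lambda_n)\notin\mathcal{C}_{\textup{loc}}$—is equivalent.)

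Your upper bound, however, is a genuinely different argument from the paper's. The paper evaluates the conserved flux $\mathcal{H}$ at $x=0$ (where $u_x=0$), so that $\int(\mathcal{W}(|\nabla u|^2)+2B(u,\lambda))\,dy=0$; discarding the positive $\mathcal{W}$ term and estimating $2B$ from below via \eqref{bcond} yields $(\lambda-1)\|u(0,\cdot)\|_2^2+\tfrac{b_1}{2}\|u(0,\cdot)\|_4^4\leq 0$ and hence $\lambda\leq 1+\tfrac{|b_1|}{2}|u|_0^2$. You instead test \eqref{cub} directly against the transverse eigenfunction $\cos y$ over a truncated strip, using evenness and the nodal properties to kill or sign the boundary and cross-terms, and then apply \eqref{bcond} pointwise; this also delivers $\lambda\leq 1+|b_1||u|_0^2$. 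The paper's route leans on positivity of $\mathcal{W}$ and reuses the Hamiltonian machinery that is needed again in \thref{dontlose}; yours leans on positivity of $\mathcal{W}'$ (which does follow from \eqref{Wcond}, resp.\ \eqref{wnondegen}, together with $\mathcal{W}'(0)=1$, since $\sqrt{q}\,\mathcal{W}'(q)$ is increasing) and is more self-contained and elementary. Both correctly give a $\delta$-independent $\lambda_2^+$ for Model II, because the ellipticity constraint on $\mathcal{O}$ already forces $|\nabla u|^2<q_1$ pointwise and hence a $\delta$-independent sup bound for $u$—a point you handle cleanly. Your proposal is correct.
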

\begin{proof}
First, let us suppose that there is some sequence $\{(u_{n},\lambda_{n})\} \subset \mathcal{C}_{\delta}^{I,II} \setminus\mathcal{C}^{I,II}_{\text{loc}}$ for which $\lambda_{n} \to 0$. By \thref{sperbtype} and \thref{top} we see that $\{u_{n}\}$ is uniformly bounded in $C^{3+\alpha}_{\text{b}}(\overline{\Omega})$ (in the case of Model II there is dependence on $\delta$). Then, from \thref{boundsL}, it follows that $u_{n} \to 0$ in  $C^{3+\alpha}_{\text{b}}(\overline{\Omega})$. However, \thref{Positive} then implies that $u_{n} \in \mathcal{C}^{I,II}_{\text{loc}}$ for large enough $n$, and this contradicts part \ref{notinloc} of \thref{global}. 

To establish an upper bound on $\lambda$, see that \eqref{bcond} and \thref{Hconst} imply 
\begin{gather*}
    0 = \int_{-\frac{\pi}{2}}^{\frac{\pi}{2}}(\mathcal{W}(|\nabla u |^{2})+2B)\big\vert_{x=0}\,dy \geq \int_{-\frac{\pi}{2}}^{\frac{\pi}{2}} \mathcal{W}(|\nabla u|^{2})\big\vert_{x=0}\,dy+(\lambda -1) \|u(0,\cdot)\|_{2}^{2}+\frac{b_{1}}{2}\|u(0,\cdot)\|_{4}^{4},    
\end{gather*}
where we are using $\|\cdot\|_{p}$ to denote the $L^{p}$ norm on $[-\frac{\pi}{2},\frac{\pi}{2}]$. Since $\mathcal{W}(q) > 0$ for $q>0$, it follows that $(\lambda -1)\|u(0,\cdot)\|_{2}^{2}+\frac{b_{1}}{2}\|u(0,\cdot)\|_{4}^{4} \leq 0$. Appealing to \thref{top}, We find that \begin{gather*}
    \lambda \leq \left(1+\frac{|b_{1}|}{2}\sup_{s>0}|u(s)|^{2}\right)\|u(0,\cdot)\|_{2}^{2} \leq C(c_{1}, c_{2}, b_{1}) \qedhere
\end{gather*}
\end{proof}

We are now ready to state the main a priori estimate. 

\begin{proposition} \thlabel{apriori}
If $(u, \lambda) \in \mathcal{C}^{I}$, then then there exists some $C(c_{1},c_{2},b_{1})>0$ for which $|u|_{3+\alpha} \leq C$. If instead $(u,\lambda) \in \mathcal{C}^{II}_{\delta}$, then there exists $C(c_{1},c_{2},b_{1},\delta) >0$ for which $|u|_{3+\alpha} \leq C$. 
\end{proposition}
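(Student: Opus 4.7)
The plan is to assemble the bound from the previously established ingredients, which essentially already contain everything we need; the only task is to track dependencies carefully and handle the local portion separately.

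First, I would establish a uniform upper bound on the loading parameter $\lambda(s)$ along the full curve. On the local portion $\mathcal{C}^{I,II}_{\textup{loc}}$ this is immediate since $\lambda(s) = \epsilon^{2} < \epsilon_{0}^{2}$ by the parametrization in \thref{existenceuniqueness}. For $(u,\lambda) \in \mathcal{C}^{I} \setminus \mathcal{C}^{I}_{\textup{loc}}$ the upper bound $\lambda < \lambda_{1}^{+}(c_{1},c_{2},b_{1})$ is furnished by \thref{boundsonlambda}, and the analogous bound $\lambda < \lambda_{2}^{+}(c_{1},c_{2},b_{1},\delta)$ holds on $\mathcal{C}^{II}_{\delta} \setminus \mathcal{C}^{II}_{\textup{loc}}$.

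Next, I would derive a uniform upper bound on $|u|_{0}$. For Model I, this is exactly the content of the last assertion of \thref{top}, which gives $|u(x_{0},0)|_{0} \leq C(c_{1},c_{2},b_{1})$ at every transversal slice, and hence bounds $|u|_{0}$ in terms of the parameters of the problem alone (using the nodal property $u_{x} \leq 0$ on $\Omega^{+}$ from \thref{globalnodal} to locate the supremum on $\{x=0\}$). For Model II the bound is instantaneous: by \thref{bounds_u_II}, membership in $\mathcal{O}_{\delta}$ forces $|\nabla u|^{2} \le q_{1}$, and then the homogeneous Dirichlet condition together with integration from the boundary along vertical lines yields $|u|_{0} \leq \pi\sqrt{q_{1}}$, which is a constant independent of the solution.

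With these in hand, I would feed $K := |u|_{0} + \lambda$ into \thref{sperbtype}. In the Model I case that lemma produces $|u|_{3+\alpha} \leq C(K)$, and since $K$ depends only on $(c_{1},c_{2},b_{1})$ we conclude $|u|_{3+\alpha} \leq C(c_{1},c_{2},b_{1})$. In the Model II case the same lemma yields $|u|_{3+\alpha} \leq C(K,\delta)$ with $K = K(c_{1},c_{2},b_{1},\delta)$, giving the desired $C(c_{1},c_{2},b_{1},\delta)$. For the tail of the local curve one may either invoke the local construction \eqref{smallsol} directly or note that the same combination of \thref{sperbtype} with $\lambda = \epsilon^{2}$ and $|u^{\epsilon}|_{0} \lesssim \epsilon$ gives a strictly smaller constant, so taking the maximum of the two estimates yields a uniform $C$ along all of $\mathcal{C}^{I}$ (respectively $\mathcal{C}^{II}_{\delta}$).

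There is no real obstacle here; the argument is a pure bookkeeping exercise built on \thref{top}, \thref{sperbtype}, and \thref{boundsonlambda}. The only mild subtlety is to confirm that the constant in the Model I estimate depends on nothing beyond $(c_{1},c_{2},b_{1})$, which requires noting that the bound in \thref{sperbtype} for Model I is $K$-dependent (not $(K,\delta)$-dependent), because the ellipticity constant $\xi_{1}$ in the pointwise gradient bound \eqref{grad_sup} is absorbed into the structural constants of $\mathcal{W}$ via \eqref{Wcond}.
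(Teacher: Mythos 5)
Your argument is correct and follows essentially the same route as the paper: combine the $|u|_{0}$ bound from \thref{top} (Model I) or \thref{bounds_u_II} (Model II) with the $\lambda$ bound from \thref{boundsonlambda}, then feed both into \thref{sperbtype}. The only addition is your explicit handling of the local portion of the curve, where the paper's appeal to \thref{boundsonlambda} (stated only on $\mathcal{C}^{I,II}\setminus\mathcal{C}^{I,II}_{\textup{loc}}$) is not directly applicable; your observation that $\lambda=\epsilon^{2}<\epsilon_{0}^{2}$ there closes that small gap cleanly.
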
  

\begin{proof}
For $(u,\lambda) \in \mathcal{C}^{I}$, the result follows from \thref{top} and \thref{boundsonlambda} combined with \thref{sperbtype}. For $(u,\lambda) \in \mathcal{C}^{II}_{\delta}$, the estimate can be obtained from \thref{bounds_u_II} and \thref{boundsonlambda} in conjunction with \thref{sperbtype}. 
\end{proof}
\end{section}

\begin{section}{Proof of the main results} \label{main_proofs}
We are now prepared to prove the main results. The existence of a global solution branch, for either model, is shown in Section~\ref{globalsection}. The key difference between the global behavior of $\mathcal{C}^{I}$ and $\mathcal{C}^{II}$ is related to alternative \ref{blowup_alt} of \thref{global}; for $\mathcal{C}^{I}$ it is shown to be impossible, thus forcing alternative \ref{lossofC}, whereas for $\mathcal{C}^{II}$ it is shown to hold (note that \ref{lossofC} is \textit{not} necessarily excluded in this case).  

\begin{proof}[Proof of \thref{thm1}]
By \thref{global}, there exists a curve of solutions $\mathcal{C}^{I}$ extending $\mathcal{C}^{I}_{\text{loc}}$, which is locally real analytic with $C^{0}$ parameterization
\begin{gather*}
    \mathcal{C}^{I} = \{(u(s),\lambda(s)) \; : \; 0<s < \infty \} \subset X\times \mathbb{R}. 
\end{gather*} 
The symmetry and monotonicity properties of \hyperref[sm]{\thref{thm1}.\ref{sm}} are proved throughout in \thref{globalnodal}. The bounds on $\lambda(s)$ and $\sup_{s \geq 0}|u(s)|_{3+\alpha}$ from \hyperref[th1boundslambda]{\thref{thm1}.\ref{th1boundslambda}} and \hyperref[bounds_displacement]{\thref{thm1}.\ref{bounds_displacement}} are established in \thref{boundsonlambda} and \thref{apriori}, respectively. This establishes all parts of \thref{thm1} except for the broadening in \ref{broadening}. From the alternatives in \thref{global}, we see that \ref{broadening} must hold if 
\begin{gather*}
    N(s)= |u(s)|_{3+\alpha}+\dfrac{1}{\text{dist}(u(s),\partial \mathcal{O})}+\lambda(s)+\dfrac{1}{\text{dist}(\lambda(s),\partial \mathcal{I})}
\end{gather*}
is bounded uniformly in $s$. The bound on the first term follows  directly from \thref{sperbtype} and those on the third and fourth terms follow from the estimates on $\lambda$ established in \thref{boundsonlambda}. Recall that $\mathcal{O}$ is defined by \eqref{ODef}, so \eqref{Wcond} implies that the second term remains bounded as well. Hence, we have the desired control over $N(s)$, and the result must hold. \end{proof}

We need to prove one more lemma in preparation for \thref{thm2}.

\begin{lemma}[Nonexistence of monotone fronts]
\thlabel{dontlose}
Let $\mathcal{F}$ correspond to Model I. Then, $\mathcal{F}^{-1}(0) \cap \mathcal{O}_{\delta}$, for $\delta>0$, is locally pre-compact in $X$. In particular, alternative \ref{Front} of \thref{CorF} cannot hold for a sequence $\{(u_{n},\lambda_{n})\} \subset \mathcal{C}^{II}_{\delta}$, $\delta>0$.
\end{lemma}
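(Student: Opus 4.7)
The plan is to first rule out the front alternative \ref{Front} of \thref{CorF} for any sequence $\{(u_n,\lambda_n)\}\subset\mathcal{C}^{II}_{\delta}$; the local pre-compactness of $\mathcal{F}^{-1}(0)\cap\mathcal{O}_{\delta}$ in $X$ then follows by invoking the remaining alternative \ref{Compactness}. So suppose toward contradiction that after passing to a subsequence and translating by some $x_n\to\infty$, the sequence $\tilde{u}_n(x,y):=u_n(x+x_n,y)$ converges in $C^3_{\text{loc}}(\overline{\Omega})$ to some $\tilde{u}\in C^{3+\alpha}_{\text{b}}(\overline{\Omega})$ solving \eqref{cub} at some $\lambda>0$, with $\tilde{u}\not\equiv 0$ and $\tilde{u}_x\le 0$. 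The nodal structure of \eqref{nodalprop} passes to $\tilde{u}$ (with non-strict inequalities) by \thref{closedprop}, so in particular $\tilde{u}\ge 0$.

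Next, I would extract an $x=-\infty$ profile. Since $\tilde{u}$ is bounded and $\tilde{u}_x\le 0$, there exists $\tilde{U}:[-\tfrac{\pi}{2},\tfrac{\pi}{2}]\to\real$ with $\tilde{u}(x,\cdot)\to\tilde{U}(\cdot)$ as $x\to-\infty$; by the uniform Schauder bounds along $\mathcal{C}^{II}_{\delta}$ (\thref{apriori}) and translation compactness applied to the sequence $\tilde{u}(\cdot-n,\cdot)$, this limit is attained in $C^{3+\alpha}_{\text{loc}}$, $\tilde{U}$ is a $C^{3+\alpha}$ solution of the reduced ODE $(\mathcal{W}'(\tilde{U}_y^2)\tilde{U}_y)_y=b(\tilde{U},\lambda)$ with $\tilde{U}(\pm\tfrac{\pi}{2})=0$, and $\tilde{u}_x(x,\cdot),\,\tilde{u}_{xx}(x,\cdot)\to 0$ uniformly in $y$ as $x\to-\infty$.

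The crucial input is the conserved quantity \thref{Hconst}: since $\tilde{u}\in X$ limits to $0$ as $x\to+\infty$, $\mathcal{H}\equiv 0$ on $\tilde{u}$, and taking $x\to-\infty$ gives
\begin{equation*}
\int_{-\pi/2}^{\pi/2}\Big(\tfrac12\mathcal{W}(\tilde{U}_y^2)+B(\tilde{U},\lambda)\Big)\,dy=0. \tag{$\ast$}
\end{equation*}
Multiplying the reduced ODE by $\tilde{U}$ and integrating by parts using $\tilde{U}(\pm\tfrac{\pi}{2})=0$ yields
\begin{equation*}
\int_{-\pi/2}^{\pi/2}\Big(\mathcal{W}'(\tilde{U}_y^2)\tilde{U}_y^2+\tilde{U}\,b(\tilde{U},\lambda)\Big)\,dy=0. \tag{$\ast\ast$}
\end{equation*}
Subtracting $\tfrac12(\ast\ast)$ from $(\ast)$ produces
\begin{equation*}
\int_{-\pi/2}^{\pi/2}\Big(\tfrac12\big[\mathcal{W}(\tilde{U}_y^2)-\tilde{U}_y^2\mathcal{W}'(\tilde{U}_y^2)\big]+\big[B(\tilde{U},\lambda)-\tfrac12\tilde{U}\,b(\tilde{U},\lambda)\big]\Big)\,dy=0.
\end{equation*}
By \eqref{wdegendamp} the first bracket is strictly positive whenever $\tilde{U}_y\neq 0$. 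For the second, concavity of $b$ in $z$ together with $b(0,\lambda)=0$ gives the chord inequality $b(s,\lambda)\ge (s/\tilde{U})\,b(\tilde{U},\lambda)$ for $0\le s\le\tilde{U}$, which integrates to $B(\tilde{U},\lambda)\ge\tfrac12\tilde{U}\,b(\tilde{U},\lambda)$. Hence both integrands are non-negative, forcing $\tilde{U}_y\equiv 0$, and then the Dirichlet condition gives $\tilde{U}\equiv 0$.

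With $\tilde{u}(\pm\infty,\cdot)=0$ and $\tilde{u}_x\le 0$, monotonicity forces $\tilde{u}\equiv 0$, contradicting $\tilde{u}\not\equiv 0$ and ruling out alternative \ref{Front}. Consequently any bounded sequence in $\mathcal{F}^{-1}(0)\cap\mathcal{O}_{\delta}$ satisfying the hypotheses of \thref{CorF} admits a $C^{3+\alpha}_{\text{b}}$-convergent subsequence, which is the claimed local pre-compactness. The main obstacle I anticipate is the rigorous justification of the $x\to-\infty$ limit (existence of $\tilde{U}$ in $C^{3+\alpha}$ and the vanishing of $\tilde{u}_x,\tilde{u}_{xx}$ there), since this is what permits passing the conserved quantity $\mathcal{H}$ to a one-dimensional identity; the uniform elliptic regularity together with translation compactness of bounded solutions on the strip handles this, but it is the most delicate ingredient of the argument.
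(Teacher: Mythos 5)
Your proposal is correct and takes essentially the same route as the paper. You derive the same two integral identities, the vanishing of the conserved quantity $\mathcal{H}$ on the limiting $x$-independent profile $U$ and the ODE tested against $U$, and combine them in the same linear combination (yours is $(\ast)-\tfrac12(\ast\ast)$, the paper's is $(\ast\ast)-2(\ast)$, which is the same up to scale) before invoking \eqref{wdegendamp} and the concavity of $b$ to obtain the contradiction. The only cosmetic differences: the paper asserts the strict inequality $b(z,\lambda)z-2B(z,\lambda)<0$ for $z>0$ and reads the contradiction off immediately, whereas you only prove the non-strict chord inequality and then force $U_y\equiv 0$ before using the Dirichlet condition to conclude $U\equiv 0$; this is actually the cleaner formulation, since concavity of $b$ alone only gives the non-strict version (strictness would fail if $b$ were linear), and your route still closes the argument through the strict sign of the $\mathcal{W}$ term. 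On the $x\to-\infty$ limit, you flag it as the delicate step and propose translation compactness; the paper simply cites \cite[Lemma~2.3]{chen2020global} for the same fact, so your plan matches what the reference provides.
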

\begin{proof}
From \thref{CorF} we find that if $\mathcal{F}^{-1}(0) \cap \mathcal{O}_{\delta}$ fails to be locally pre-compact, then \hyperref[Front]{\thref{CorF}.\ref{Front}} must hold. Suppose that $\{u_{n}\}$ is a sequence satisfying \hyperref[Front]{\thref{CorF}.\ref{Front}}. Then $\lim_{n \to \infty} u_{n}(\cdot + x_{n},y)=: \tilde{u}(x,y) \in C^{3+\alpha}_{\text{b}}(\overline{\Omega})$ must solve \eqref{cub}. Moreover, $U(y):= \lim_{x\to -\infty} \tilde{u}(x,y)$ satisfies
\begin{gather}
\label{limitingode}
    ((\mathcal{W}'((U_{y})^2)U_{y})_{y}-b(U,\lambda)=0,
\end{gather}
which can be seen by \cite[Lemma~2.3]{chen2020global}. Multiplying \eqref{limitingode} by $U(y)$ and integrating by parts yields 
\begin{gather*}
\label{eq1}
    0 = \int_{-\frac{\pi}{2}}^{\frac{\pi}{2}}(\mathcal{W}'(U_{y}^{2})U_{y}^{2}+b(U,\lambda)U)\; dy.
\end{gather*}
From \thref{Hconst} we know that $\mathcal{H}(u_{n},\lambda; x)=0$, and hence that $\mathcal{H}(U,\lambda; x)=0$. Written explicitly this becomes  
\begin{gather*}
    0=\int_{-\frac{\pi}{2}}^{\frac{\pi}{2}}\left(\frac{1}{2}\mathcal{W}(U^{2}_{y})+B(U,\lambda)\right)\; dy.
\end{gather*}
After combining these equations, we find
\begin{gather}\label{modelII_cont}
    0 = \int_{-\frac{\pi}{2}}^{\frac{\pi}{2}}(\mathcal{W}'(U_{y}^{2})U_{y}^{2}-\mathcal{W}(U^{2}_{y})+b(U,\lambda)U-2B(U,\lambda))\; dy.
\end{gather}
A simple calculation will show
\begin{gather*}
    b(z, \lambda)z -2 B(z, \lambda) < 0 \; \; \; \text{for} \; \; \;  z>0, 
\end{gather*}
where the concavity of $b(z,\lambda)$ and the fact that $b(0, \lambda)=0$ are used. Recall that $q\mathcal{W}'(q)-\mathcal{W}<0$ by \eqref{wdegendamp}. But then the right hand side of \eqref{modelII_cont} is negative, which is a contradiction, hence the result holds.  
\end{proof}

\begin{proof}[Proof of \thref{thm2}]
From \thref{global}, we see there is a curve of solutions $\mathcal{C}^{II}$, extending $\mathcal{C}^{II}_{\text{loc}}$, which is locally real analytic with $C^{0}$ parameterization
\begin{gather*}
    \mathcal{C}^{II} = \{(u(s),\lambda(s)) \; : \; 0<s < \infty \} \subset X \times \mathbb{R}. 
\end{gather*} 
As in the proof of \thref{thm1}, we wish to understand the alternatives in \hyperref[alternatives]{\thref{global}.\ref{alternatives}}. The quantity
\begin{gather*}
   \inf_{\overline{\Omega}}\big(\mathcal{W}'(q)+2q\mathcal{W}''(q)\big)\big\vert_{q=|\nabla u(s)|^{2}}
\end{gather*}
is not bounded below a priori along $\mathcal{C}^{II}$ as it was for $\mathcal{C}^{I}$. This leads us to consider $N(s)$ (see \eqref{blowup} or the proof of \thref{thm1}) on a segment $\mathcal{C}^{II}_{\delta}$ of $\mathcal{C}^{II}$, with $\delta>0$. An estimate of the form $|u(s)|_{3+\alpha} < C(\delta)$ is obtained whenever $(u(s), \lambda(s)) \in \mathcal{C}^{II}_{\delta}$, by \thref{apriori}. 

Now, if we assume $\mathcal{C}^{II} =\mathcal{C}^{II}_{\delta^{*}}$, for some $\delta^{*}>0$, then the first term of $N(s)$ is uniformly bounded along $\mathcal{C}^{II}$ by the paragraph above. Of course, this assumption also implies that the second term is uniformly bounded along $\mathcal{C}^{II}$ by definition. Furthermore, \thref{boundsonlambda} implies that the third and fourth terms are also controlled. 
Thus, there is some $C'(\delta)$ for which $s \gg 1$ implies $|N(s)| \leq C'(\delta)$. Hence, alternative \ref{lossofC} of \thref{global} must hold. However, this contradicts the impossibility of fronts established in \thref{dontlose}. So we must have
\begin{gather*}
    \lim_{s\to \infty}\inf_{\overline{\Omega}}\big(\mathcal{W}'(q)+2q\mathcal{W}''(q)\big)\big\vert_{q=|\nabla u(s)|^{2}} = 0.
\end{gather*}
In particular, we must have $|\nabla u(s)|^{2} \to q_{1}$. Note that our estimates on $|u(s)|_{3+\alpha}$ and $\lambda(s)$ breakdown as $\delta \to 0$. This leaves open the possibility that $\lambda(s)$ approaches either $0$ or $\infty$, or that a blow-up in $C^{3+\alpha}(\overline{\Omega})$ (note that $|u(s)|_{0}$ and $|u_{y}(s)|_{0}$ are indeed bounded, but the elliptic estimates depend on $\delta$) occurs concurrently with the loss of ellipticity. This establishes \hyperref[loss_ellipt]{\thref{thm2}.\ref{loss_ellipt}}

The monotonicity and symmetry properties of \hyperref[sm2]{\thref{thm2}.\ref{sm2}} are prove in \thref{globalnodal}. Finally, the bound on $\lambda(s)$ of \hyperref[th2boundslambda]{\thref{thm2}.\ref{th2boundslambda}} is proved in \thref{boundsonlambda}. 
\end{proof}

\end{section}

\section*{Acknowledgements}
The author was supported in part by the NSF through DMS-1812436. The author also wishes to thank Samuel Walsh for his advise and guidance throughout the writing of this paper. 
\appendix

\begin{section}{Maximum principles}
We recall some variants of the maximum principle which are used repeatedly throughout the paper. In particular, we state the strong maximum principle, including the version which a sign condition on the zeroth order term is replaced with a sign condition on the solution (\cite[Lemma 1]{serrin1971symmetry}), the Hopf boundary lemma, Serrin edge point lemma (see \cite{serrin1971symmetry}), and finally a variant which holds when a positive supersolution is known to exist (see \cite[Section~5 Theorem~10]{protter2012maximum}). 

\begin{theorem} \thlabel{max_pricp}
Let $\Omega$ be a connected, open set (possibly unbounded), and consider the second order operator $L$ given by 
\begin{gather}
    L: =\sum_{i,j = 1}^{n}a_{ij}(x) \partial_{i}\partial_{j} + \sum_{i=1}^{n}b_{i}(x)\partial_{i}+c(x)
\end{gather}
where the coefficients are of class $C_{0}^{0}(\overline{\Omega})$. Suppose that $L$ is uniformly elliptic in the sense that there exists $\lambda>0$ such that 
\begin{gather}
    \sum_{i,j}a_{ij}(x) \xi_{i}\xi_{j} \geq \lambda |\xi|^{2}, \qquad \text{for all} \; \; \xi \in \real^{n}, \; x \in \overline{\Omega}, 
\end{gather}
and that $(a_{ij})$ is symmetric. Let $u \in C^{2}(\Omega) \cap C^{0}(\overline{\Omega})$ be a solution of $Lu=0$ in $\Omega$. 

\begin{enumerate}[label=\rm(\roman*)]
\item \label{strong_max} \textup{(Strong maximum principle)} Suppose that $u$ attains a maximum value on $\overline{\Omega}$ at a point  in the interior of $\Omega$. If $c \leq 0 $ in $\Omega$, or if $\sup_{\overline{\Omega}}u \leq 0 $, then $u$ is a constant function. 
\item\label{hopf} \textup{(Hopf boundary lemma)} Suppose that $u$ attains it maximum value on $\overline{\Omega}$ at a point $x_{0} \in \partial \Omega$ for which there exists an open ball $B \subset{\Omega}$ with $\overline{B} \cap \partial \Omega = \{x_{0}\}$. Assume that either $c \leq 0$ or else $\sup_{B}u = 0$. Then $u$ is a constant function or 
\begin{gather*}
    \nu \cdot \nabla u(x_{0}) > 0, 
\end{gather*}
where $\nu$ is the outward unit normal to $\Omega$ at $x_{0}$. 
\item \label{serrin} \textup{(Serrin edge point lemma)} Let $x_{0} \in \partial \Omega$ be an edge point in the sense that near $x_{0}$ the boundary $\partial \Omega$ consists of two transversally intersection $C^{2}$ hypersurfaces $\{\sigma(x) =0 \}$ and $\{\gamma(x) =0 \}$. Suppose that $\sigma, \gamma < 0$ in $\Omega$, $u \in C^{2}(\overline{\Omega})$, $u>0$ in $\Omega$ and $u(x_{0}) =0$. Assume further that $a_{ij} \in C^{2}$ in a neighborhood of $x_{0}$,
\begin{gather*}
    B(x_{0}) =0, \qquad \text{and} \qquad \partial_{\tau}B(x_{0})=0, 
\end{gather*}
for every differential operator $\partial_{\tau}$ tangential to $\{\sigma=0\}\cap\{\gamma=0\}$ at $x_{0}$. Then for any unit vector $s$ outward from $\Omega$ at $x_{0}$, either 
\begin{gather*}
    \partial_{s}u(x_{0}) < 0, \; \; \text{and} \; \; \partial^{2}_{s}u(x_{0}) <0.
\end{gather*}
\item \label{pos_sup_sol} \textup{(When a positive supersolution exists)} Suppose that there exists $v \in C^{2}(\overline{\Omega})$ such that $v>0$ and $Lv \leq 0$ in $\overline{\Omega}$. Then either $\dfrac{u}{v}$ is constant, or $\dfrac{u}{v}$ cannot achieve a nonnegative maximum in $\Omega$. \label{max_sup_sol} 
\end{enumerate}
\end{theorem}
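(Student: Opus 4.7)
The plan is to treat the four assertions in sequence, since each is well-established in the literature and the combined statement is really a convenient packaging of classical facts. In every case the operator $L$ is uniformly elliptic with continuous coefficients on $\overline{\Omega}$, so the basic weak maximum principle of Gilbarg--Trudinger \cite[Ch.~3]{gilbarg2015elliptic} applies on every bounded subdomain $\Omega' \Subset \Omega$. The connectedness of $\Omega$ will then let us propagate conclusions obtained in such subdomains to all of $\Omega$ by a chaining argument.

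For \ref{strong_max}, the standard case $c \leq 0$ is the classical strong maximum principle (Gilbarg--Trudinger, Theorem~3.5), whose proof combines the weak maximum principle with a Hopf-type barrier. The refinement that assumes only $\sup_{\overline{\Omega}}u \leq 0$ instead of $c \leq 0$ is Serrin's observation \cite[Lemma~1]{serrin1971symmetry}: at an interior maximum $x_0$ of $u$, one has $u(x_0) \leq 0$, so the term $cu$ can be rearranged as a right-hand side with a favorable sign, reducing to the classical case. I would simply rewrite $L$ as $\tilde{L} := L - c^+$ where $c^+ = \max(c,0)$, note that $\tilde{L}u \geq 0$ and $\tilde{L}$ has nonpositive zeroth-order coefficient, and invoke the classical version.

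For \ref{hopf}, the proof in a small interior ball $B \subset \Omega$ tangent to $\partial\Omega$ at $x_0$ proceeds by constructing an auxiliary comparison function of the form $w(x) = e^{-\alpha|x-x_c|^2} - e^{-\alpha r^2}$ on the annulus between two concentric spheres. Choosing $\alpha$ large forces $Lw \geq 0$ on the annulus, and a small multiple $\varepsilon w$ is then added to $u-u(x_0)$ to apply the weak maximum principle; taking the outward normal derivative at $x_0$ delivers the strict inequality. The Serrin variant (allowing $\sup_B u = 0$ instead of $c \leq 0$) uses the same trick as in \ref{strong_max}. For \ref{serrin}, which I would cite directly from \cite[Lemma~2]{serrin1971symmetry}, the ingredient is a refined barrier adapted to the edge geometry $\{\sigma=0\}\cap\{\gamma=0\}$: one builds a quadratic function in the two boundary defining functions whose second-order behavior along $s$ can be compared against $u$ using a second-order Taylor expansion of both $u$ and the coefficients $a_{ij}$ at $x_0$, exploiting the vanishing of $B$ and $\partial_\tau B$ at $x_0$.

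For \ref{pos_sup_sol}, the standard substitution $w := u/v$ is the key step. A direct computation shows that $w$ satisfies an elliptic equation $\tilde{L}w = f$ in $\Omega$, where the coefficients of $\tilde{L}$ are built from $a_{ij}, b_i$ and the logarithmic derivatives of $v$, and in particular the zeroth-order coefficient of $\tilde{L}$ equals $(Lv)/v \leq 0$. Applying \ref{strong_max} to $w$ then yields the stated dichotomy: either $w$ is constant, or it cannot attain a nonnegative interior maximum. The main technical point here is verifying that the new operator is still uniformly elliptic with the same ellipticity constant $\lambda$, which is immediate because the leading coefficients $a_{ij}$ are unchanged by the substitution; the only real care needed is to ensure $v \in C^2(\overline{\Omega})$ with $v > 0$, which is included in the hypotheses. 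The full statement is found in \cite[Section~5, Theorem~10]{protter2012maximum}.
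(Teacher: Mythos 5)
Your proposal is correct, and it matches the paper's approach in spirit: the paper does not prove this theorem but simply collects the four statements with citations to Serrin~\cite{serrin1971symmetry} (for the signed-solution refinement of the strong maximum principle and for the edge-point lemma) and Protter--Weinberger~\cite[Section~5, Theorem~10]{protter2012maximum} (for the positive-supersolution variant), exactly the sources you invoke. Your sketches of the underlying arguments — the $\tilde L = L - c^+$ trick for (i), the exponential annulus barrier for (ii), the adapted edge barrier for (iii), and the $w = u/v$ substitution computation showing the transformed zeroth-order coefficient is $Lv/v \leq 0$ for (iv) — are all standard and accurate, so there is nothing to correct.
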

\begin{remark}
If, in the context of \ref{max_sup_sol} above we suppose that that $u \geq$ on $\Omega$, then the existence of a positive superolution implies that $u>0$ in $\overline{\Omega}$.
\end{remark}

\end{section}

\bibliography{bibliography}
\bibliographystyle{siam}

\end{document}